\newif\ifextended
\title{Glocal Smoothness:\\ Line search and adaptive step sizes can help in theory too!}
\author{\name Curtis Fox \email curtfox@cs.ubc.ca \\
      \addr Department of Computer Science\\
      University of British Columbia
      \AND
      \name Aaron Mishkin \email aaronpmishkin@gmail.com \\
      \addr Department of Computer Science\\
      Stanford University
      \AND
      \name Sharan Vaswani \email vaswani.sharan@gmail.com \\
      \addr School of Computing Science\\
      Simon Fraser University
      \AND Mark Schmidt \email schmidtm@cs.ubc.ca \\
      \addr Department of Computer Science\\
      University of British Columbia\\
      Canada CIFAR AI Chair (Amii)}
\newcommand{\yk}{y_t}
\newcommand{\zk}{z_t}
\newcommand{\zkk}{z_{t+1}}
\newcommand{\wk}{w_t}
\newcommand{\wkk}{w_{t+1}}
\newcommand{\etak}{\eta_{t}}
\newcommand{\fstar}{f_*}
\newcommand{\grad}{\nabla{} f}
\newcommand{\rbr}[1]{\left(#1\right)}
\newcommand{\sbr}[1]{\left[#1\right]}
\newcommand{\cbr}[1]{\left\{#1\right\}}
\newcommand{\abr}[1]{\left\langle#1\right\rangle}
\newcommand{\R}{\mathbb{R}}
\pgfplotsset{}
\tikzset{
    font={\fontsize{10pt}{10}\selectfont},
}
\pgfplotsset{
	compat=1.16,
	oracle/.style={color=blue, style=solid},
	bound/.style={color=red, style=solid},
	objective/.style={color=black, style=solid},
}
\newif\ifshowcolours
\def\red{red}
\def\blue{blue}
\def\green{green}
\renewcommand{\textcolor}[2]{%
  \ifshowcolours
    {\color{#1}#2}%
  \else
    \def\temp{#1}%
    \ifx\temp\red     #2\else
    \ifx\temp\blue    #2\else
    \ifx\temp\green   #2\else
        {\color{#1}#2}%
    \fi\fi\fi
  \fi
}
\begin{document}

\maketitle

\begin{abstract}
    Iteration complexities for optimizing smooth functions with first-order algorithms are typically stated in terms of a global Lipschitz constant of the gradient, and near-optimal results are then achieved using fixed step sizes. But many objective functions that arise in practice have regions with small Lipschitz constants where larger step sizes can be used. Many local Lipschitz assumptions have been proposed, which have led to results showing that adaptive step sizes and/or line searches yield improved convergence rates over fixed step sizes. However, these faster rates tend to depend on the iterates of the algorithm, which makes it difficult to compare the iteration complexities of different methods. We consider a simple characterization of global and local (``glocal'') smoothness that only depends on properties of the function. This allows upper bounds on iteration complexities in terms of iterate-independent constants and enables us to compare iteration complexities between algorithms. Under this assumption it is straightforward to show the advantages of line searches over fixed step sizes and that, in some settings, gradient descent with line search has a better iteration complexity than accelerated methods with fixed step sizes. We further show that glocal smoothness can lead to improved complexities for the Polyak and AdGD step sizes, as well other algorithms including coordinate optimization, stochastic gradient methods, accelerated gradient methods, and non-linear conjugate gradient methods.
\end{abstract}

\section{Setting the Gradient Descent Step Size}\label{sec:intro}
Machine learning models are typically trained using numerical optimization algorithms. One of the simplest algorithms used is gradient descent \cite{cauchy1847methode}, which on iteration $t$ takes a step of the form
\begin{equation}
    \label{gradient-descent}
    w_{t+1} = w_{t} - \eta_{t} \nabla f(w_{t}),
\end{equation}
for some positive step size $\eta_t$.
The simplest way to set the step size $\eta_t$ is to use a constant value throughout training. Other methods have also been proposed to obtain faster convergence in practice, including line searches \cite{armijo1966minimization} or adaptive step sizes such as the Polyak step size \cite{polyak1987introduction}. To analyze the convergence rate of gradient descent we typically assume global $L$-smoothness, meaning that the objective $f : \R^d \rightarrow \R$ is assumed to have a globally Lipschitz continuous gradient.
\begin{definition}[Global $L$-Smoothness]
\label{smoothness}
    A function $f$ has an $L$-Lipschitz continuous gradient if $\ \forall u, v \in \mathbb{R}^d$, 
    \begin{equation*}
        \norm{\nabla f(u) - \nabla f(v)} \leq L\norm{u - v}.
    \end{equation*}
\end{definition}
\begin{figure}
    \centering
  \begin{subfigure}{0.24\columnwidth}
  \includegraphics[width=\textwidth]{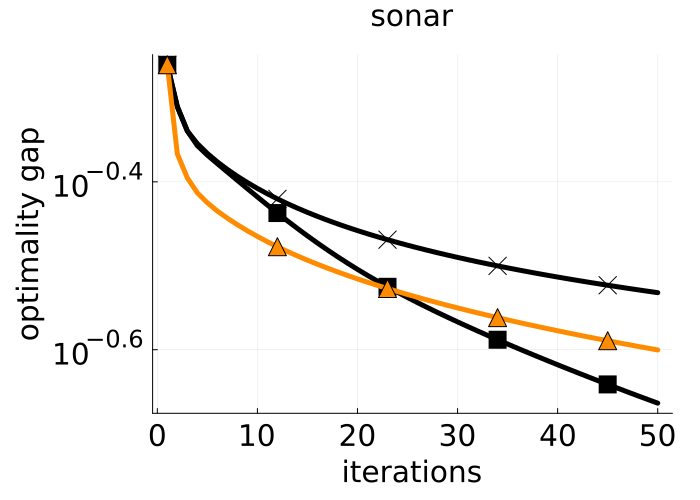}
  \end{subfigure}
  \begin{subfigure}{0.24\columnwidth}
  \includegraphics[width=\textwidth]{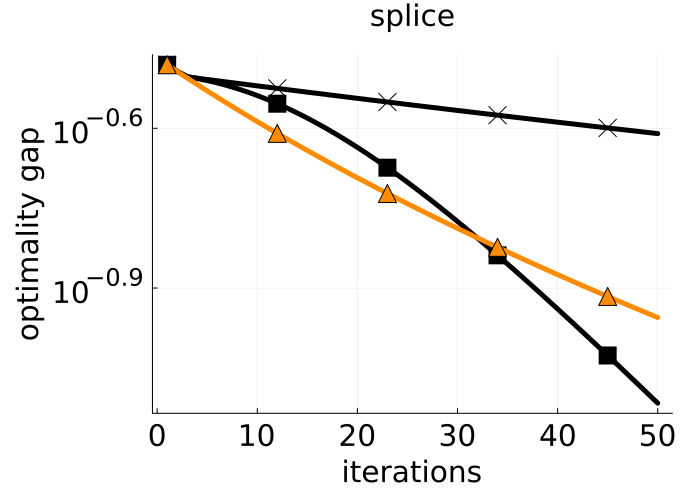}
  \end{subfigure} 
  \begin{subfigure}{0.24\columnwidth} 
  \includegraphics[width=\textwidth]{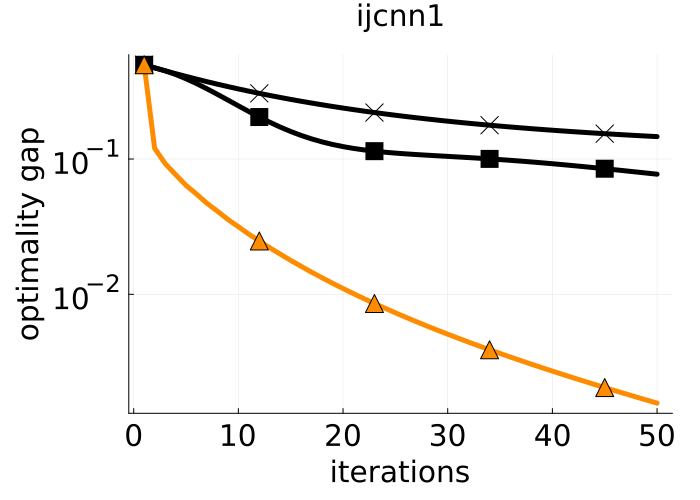}  
  \end{subfigure}  
  \begin{subfigure}{0.24\columnwidth} 
  \includegraphics[width=\textwidth]{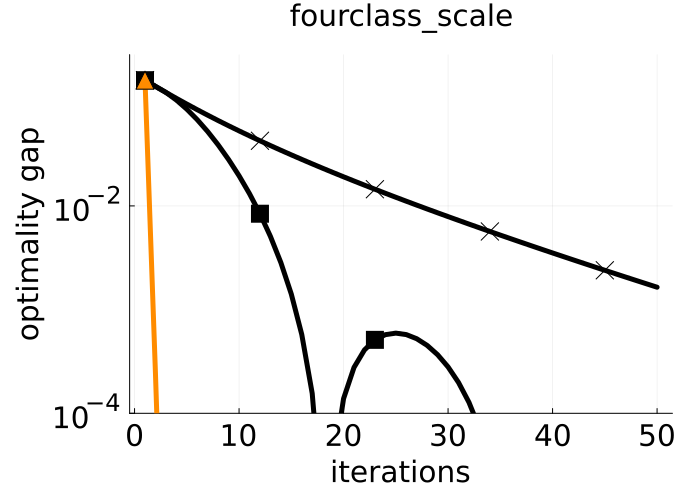}  
  \end{subfigure} 
  \centering
  \begin{subfigure}{0.50\columnwidth} 
  \includegraphics[width=\textwidth]{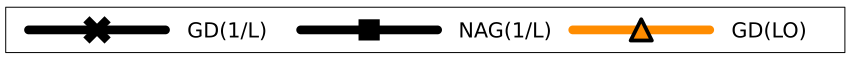}  
  \end{subfigure}   
\caption{Logistic regression on 4 datasets showing the optimality gap throughout training for gradient descent (GD) and Nesterov's accelerated gradient (NAG) method with a fixed step size of $\frac{1}{L}$, along with gradient descent with line optimization (LO). From left to right are a problem  where GD(LO) has a similar rate  to GD(1/L), a problem where GD(LO) is converging faster than GD(1/L), and two problems where GD(LO) is converging faster than NAG(1/L). These experiments were generated using the code and experimental setup of~\citet{shea2024line}.  
} \label{fig:1}
\end{figure}
\textcolor{red}{Unfortunately, the step sizes that give optimal theoretical convergence rates and those that work well in practical applications may not always be the same. To illustrate the differences between theory and practice,} contrast the fixed step size $\eta_t = \frac{1}{L}$ with using line optimization (LO) to minimize the function,
\begin{equation}
    \label{exact-line-search}
    \eta_{t} \in \argmin_{\eta} f(w_{t} - \eta \nabla f(w_{t})).
\end{equation}
We refer to gradient descent with $\eta_t=\frac 1 L$ as GD(1/L) and gradient descent with $\eta_t$ set using~\eqref{exact-line-search} as GD(LO). \textcolor{red}{Standard analyses of GD(1/L) and GD(LO) give identical worst-case theoretical convergence rates for $L$-smooth functions~\cite{KarimiNS16}. But in practice, GD(LO) often performs much better than its worst case behaviour. Indeed, GD(LO) typically
converges much faster than GD(1/L). As a further illustration of the gap between worst-case convergence rates and practical performance, consider GD(LO) compared to Nesterov's accelerated gradient (NAG) method with a step size of $\frac{1}{L}$~\cite{nesterov2018lectures} which we call NAG(1/L). 
Although GD(LO) has a slower worst-case theoretical convergence rate in terms of $L$, GD(LO) often converges faster in practice (see Figure \ref{fig:1}).} 

One reason GD(LO) can converge faster than GD(1/L) and NAG(1/L) in practice is that it can use much larger step sizes than $\frac{1}{L}$ which may lead to more progress. These larger step sizes are possible because, in a region around each $w_t$, the Lipschitz smoothness assumption~\eqref{smoothness} may hold with a smaller constant $L$. Many works have analyzed first-order methods under local measures of $L$ (see Section~\ref{Background}). However, these analyses generally depend on the iterates $w_t$ of the algorithm, making it difficult to compare convergence rates between algorithms since different algorithms will take differing paths during training and thus have different local $L$ values. Thus, it is hard to use these assumptions to theoretically justify why GD(LO) can significantly outperform NAG(1/L).

In Section \ref{glocal-setting}, we introduce a \emph{glocal} (``global-local'') smoothness assumption that augments global smoothness with a measure of local smoothness near the solution. A function is $(L,L_*,\delta)$-glocal smooth if it is globally $L$-smooth and locally $L_*$-smooth when the sub-optimality is at most $\delta$. This characterization of global and local smoothness \textcolor{red}{allows theoretical analyses to reflect, for example, that GD(LO) may have a faster worst-case convergence rate than GD(1/L) locally near the solution. 
In Section~\ref{glocal-setting} we consider iteration complexity bounds for GD(LO) under strong convexity and the glocal smoothness assumption. The iteration complexity is the number of iterations for an algorithm to reach an accuracy of $\epsilon$, and under glocal smoothness we show that GD(LO) has a better worst-case iteration complexity than GD(1/L) whenever $L_* < L$. While it is possible to show an improved iteration complexity of GD(LO) over GD(1/L) under previous local smoothness assumptions, the advantage of analyses based on glocal smoothness is that they only depend on the properties of the function being optimized, and not on the precise iterates taken by the algorithm. Thus, under glocal smoothness we can also compare the worst-case iteration complexities of GD(LO) and NAG(1/L). Indeed, we give conditions under which GD(LO) has a significantly better iteration complexity bound than NAG(1/L) in terms of $L$ and $L_*$.}

Our analysis in Section~\ref{glocal-setting} focuses on LO since it leads to simple and elegant results. However, in most cases LO is not practical. In Section~\ref{Practical} we discuss how similar results hold under practical step size selection strategies like Armijo backtracking. Alternate step sizes that can work well in practice such as the Polyak step size~\cite{polyak1987introduction} and the AdGD step size~\cite{MalitskyM20} are discussed in Section~\ref{Other-assumptions} under an iterate-based variant of the glocal assumption. Section~\ref{Other-assumptions} also discusses relaxations of our strong convexity assumption such as assuming the Polyak-\L{}ojasiewcz (PL) inequality or only assuming convexity. In Section~\ref{Other-algorithms}, we show that glocal smoothness yields improved complexity bounds for other algorithms including coordinate descent, stochastic gradient descent, 
\ifextended
NAG, and non-linear conjugate gradient.
\else
and NAG.
\fi

\textcolor{red}{We close this introduction by highlighting a subtle but important point. All the iteration complexity bounds we show under glocal smoothness hold simultaneously \emph{for all values of $\delta$} (and their corresponding $L_*$ values). Thus, we obtain the tightest bound by minimizing over values of $\delta$. We show how to do this explicitly for logistic regression in Section~\ref{sec:deltaMin}). We believe that it is important in practice to be able to adapt to any value of $\delta$, and hope that this work leads theoreticians to consider algorithms with this adaptability.}


\section{How much do Line Search and Local Smoothness help?\nopunct}
\label{Background}
It may seem possible that the performance of GD(LO) may be explained by a better analysis under global smoothness. Indeed, recent work \cite{KlerkGT17} gives tight rates for GD(LO) that are faster than for GD(1/L). However, these rates do not explain why GD(LO) can converge much faster than NAG(1/L).

Many works define a notion of local smoothness based on the iterates $w_t$~\cite{Zhang2013,Scheinberg2014, grimmer2019convergence,MeiGDSS21,Berahas2023,LuM23,Orabona2023,Mishkin2024}. These local smoothness conditions depend on the lines between successive iterates $w_{t-1}$ and $w_t$, balls around the $w_t$, the convex hull of the $w_t$, or $f(w_t)$ sub-level sets. Consider using $L(w_t)$ to denote one of these measures of local smoothness at iteration $t$. In many cases, we can modify existing algorithms to adapt to the problem by allowing larger step sizes that exploit the local smoothness $L(w_t)$. This allows adaptive algorithms to obtain convergence rates depending on the sequence of $L(w_t)$ values that are faster than iterate-agnostic rates that depend on the global maximum $L$ value. 
For example, under these assumptions GD(LO) obtains a faster rate than GD(1/L); GD(LO) allows large step sizes that exploit the local smoothness $L(w_t)$ while the iterate-agnostic constant step sizes of GD(1/L) do not take advantage of the local smoothness.
But note that different algorithms have different iterates $w_t$ and thus different sequences of local smoothness constants $L(w_t)$. The differing iterate sequences make it difficult to compare two algorithms that both adapt to local smoothness. Thus, under existing local smoothness measures we cannot easily compare the GD(LO) rates depending on $L(w_t)$ to the NAG(1/L) rate which depends on $\sqrt{L}$.

Many algorithms use estimates of local smoothness to speed convergence~\cite{nesterov2013gradient,VainsencherLZ15,SchmidtRB17,fridovich2019choosing,liu2019,MalitskyM20,ParkSW21,patel2022,ShiSLRT23,li2023simple,zhang2024first}. We can use the assumptions of the previous paragraph to show that such algorithms can converge faster than algorithms that do not exploit local smoothness. However, existing tools do not allow us to compare between different algorithms exploiting local smoothness.

A classic argument is that the convergence rate of gradient descent depends only on the Lipschitz constant $L(w_0)$ over the sub-level $\{w \; | \; f(w) \leq f(w_0)\}$, provided that this sublevel set is closed~\cite[Chapter 9]{boyd2004}. Under this argument, we obtain a  rate depending on $L(w_0)$ for gradient descent with a suitable constant step size. This assumption can be viewed as assuming glocal smoothness with $\delta$ given by $f(w_0)-f_*$. However, this is a stronger assumption than the glocal assumption in general; glocal smoothness assumes that a sublevel set with a small $L_*$ exists but does not require the algorithm to be initialized in this set. Further, algorithms can perform better in practice and obtain faster theoretical rates if they can adapt to any value of $\delta$ rather than just $\delta=f(w_0)-f_*$. Indeed, the classic $\delta=f(w_0)-f_*$ assumption suggests GD(LO) performs the same as using a suitable constant step size which does not reflect practical performance.

A line of closely related works are works that assume local smoothness but do not assume global smoothness~\cite{ParkSW21,patel2022,LuM23,zhang2024first, MalitskyM20}. These works tend to focus on achieving global convergence despite the lack of global smoothness. In contrast, our focus is on exploring assumptions under which it is easy to compare the iteration complexities of different algorithms.

The $(L_0,L_1)$ non-uniform smoothness assumption \cite{zhang2019gradient} justifies the success of ideas like normalization and gradient clipping when training neural networks. Under this assumption the local smoothness can decrease as the gradient norm decreases. The $(L_0,L_1)$ non-uniform smoothness assumption is thus related to glocal smoothness, as it also allows a smaller smoothness constant near solutions (where the gradient norm is zero). Recent works have shown that gradient descent benefits from non-constant and adaptive step sizes under non-uniform smooothness \cite{gorbunov2024methods} while concurrent work demonstrates the benefits of line search under a variation on non-uniform smoothness~\cite{vaswani2025armijo}. However, we have fewer tools to establish $(L_0,L_1)$ non-uniform smoothness than global/local smoothness, and the glocal assumption arguably leads to simpler analyses. Further, in Appendix~\ref{non-uniform-section} we show that a common variant of the ($L_0,L_1)$-smoothness assumption implies glocal smoothness for functions that are either Lipschitz-continuous or Lipschitz-smooth. Thus, in these common Lipschitz settings glocal smoothness is more general and can also arise from other possible assumptions.

Finally, we note that analyses under glocal-smoothness are a special case of regional complexity analysis~\cite{CurtisR21}. In this framework we partition the search space of an algorithm into different regions that satisfy different assumptions, and analyze the progress an algorithm makes in each region. Our work considers the simpler case of just two specific regions: the whole space and a sub-level set. We believe that this is an important special case as it leads to simple analyses that still better reflect practical performance. Further, we expect that algorithms that perform well under glocal smoothness should have good performance under other measures of local smoothness.

\section{Glocal Smoothness}\label{glocal-setting}
In order to more easily compare optimization algorithms that may exploit local smoothness, we propose a notion of global-local smoothness, which we call \textit{glocal smoothness}:
\begin{definition}
    \label{glocal-smoothness}
    A function $f$ is glocally $(L,L_*, \delta)$-smooth if $f$ is globally $L$-smooth, and locally $L_*$-smooth on the set of $u \in \mathbb{R}^d$ such that $f(u) - f_* \leq \delta$ (where $f_*$ is the infimum of $f$).
\end{definition}
Assumptions of this type have been explored for speeding convergence~\cite{VainsencherLZ15} and analyzing local optima quality~\cite{mohtashami2023special}. Using the notation $\Delta_{0} = f(w_{0}) - f_{*}$ as the initial sub-optimality, our analyses assume that $\Delta_0 > \delta > \epsilon$ for the desired accuracy $\epsilon$, since without these assumptions there is no need to distinguish between the global and local regions. Our analyses also assume that $f$ has an infimum $f_*$, and some of our analyses will use $w_*$ as a global minimizer of $f$.

\subsection{Glocal Smoothness of Logistic Regression}
\label{sec:logReg}

Note that $L_* \leq L$ since $L_*$ is measured over a subset of the space, but that for some problems we have $L_* << L$. For example, consider the binary logistic regression loss,
\begin{equation}
    \ell(w) := \sum_{i = 1}^{n} p(y_i \; | \; x_i, w) \,,
    \label{eq:logReg}
\end{equation}
where $n$ is the number of examples,  $x_i \in \mathbb{R}^{d}$ are the features of example $i$, $y_i \in \{-1, +1\}$ are the labels for example $i$, and $p(y_i \; | \; x_i, w) = \ln(1 + \exp(-y_i \langle x_i, w \rangle))$. The standard bound on the global smoothness of binary logistic regression with a data matrix $X$ is $L = (1/4)\norm{X}^2$~\cite{bohning1992multinomial}. The 1/4 factor is the upper bound on $p(y_i = +1 \; | \; x_i, w) \, p(y_i = -1 \; | \; x_i, w)$. However, if near solutions we have for all $i$ that $p(y_i=+1 \; | \; x_i, w) > 0.99$ or $p(y_i = -1\; | \; x_i, w) > 0.99$, then $L_*$ is around $(1/100)\norm{X}^2$. This allows GD(LO) to eventually take steps that are 25-times larger than those used by GD(1/L) yet still decrease the function. More formally, in Appendix \ref{lr-glocal-proof} we show the following result.
\begin{lemma}
\label{lem:logReg}
The logistic regression loss~\eqref{eq:logReg} is glocally ($\frac 1 4\norm{X}^2$, \textcolor{red}{$(\ell_{*} + \delta)\, \norm{X}^2$}, $\delta$)-smooth for \textit{all} $\delta > 0$, where $\ell_{*}$ is the infimum of the loss $\ell$.
    \label{lem:logistic-glocal}
\end{lemma}
For linearly separable data $\ell^*=0$ and we have glocal ($(1/4)\norm{X}^2$, \textcolor{red}{$\delta\norm{X}^2$}, $\delta$)-smoothness for all $\delta$. Thus, GD(LO) can take increasing steps that become arbitrarily larger than 1/$L$ yet still decrease $\ell$. 




\subsection{Iteration Complexity under Global Smoothness}

To illustrate how glocal smoothness allows improved complexities, in this section
we first consider strongly convex functions.
\begin{definition}
    \label{strong convexity}
    A function $f$ is $\mu$-strongly convex for some $\mu > 0$ if $\ \forall u, v \in \mathbb{R}^d$,
    \begin{equation*}
        f(v) \geq f(u) + \iprod{\nabla f(u)}{v - u} + \frac{\mu}{2}\norm{v - u}^2.
    \end{equation*}
\end{definition}
Below we review a classic result~\cite[Chapter~9]{boyd2004} regarding the convergence rate of GD(1/L) and GD(LO) on strongly-convex functions.
\begin{lemma}
\label{lem:GD}
Assume that $f$ is $\bar{L}$-smooth and $\mu$-strongly convex. For all $t$, for the iterates of gradient descent as defined in~\eqref{gradient-descent} with step-size $\eta_t$ given by $1/\bar{L}$ or exact line optimization \eqref{exact-line-search}, the guaranteed progress is
    \begin{equation}
        f(w_{t})
        \leq f(w_{t-1}) - \frac{1}{2\bar{L}}\norm{\nabla f(w_{t-1})}^2,
        \label{eq:GD-progress}
    \end{equation}
    which under strong convexity implies
    \begin{align*}
        f(w_{t}) - f_*
        &\leq \left(1 - \frac{\mu}{\bar{L}}\right)(f(w_{t-1}) - f_*).
    \end{align*}
\end{lemma}
This implies that $f(w_{t})$ decreases monotonically for all $t$. It also implies that if we start from $\bar{w}_0$ we require 
\begin{equation}
\label{eq:GDic}
T \geq \frac{\bar{L}}{\mu}\log\left(\frac{f(\bar{w}_0) - f_*}{\bar{\epsilon}}\right)
\end{equation}
iterations to have $f(w_{T}) - f_{*} \leq \bar{\epsilon}$. Thus, for a globally $L$-smooth and $\mu$-strongly convex function, the iteration complexity of GD(1/L) and GD(LO) is $(L/\mu)\log(\Delta_{0}/\epsilon)$.

\subsection{Iteration Complexity under Glocal Smoothness}

The classic iteration complexity~\eqref{eq:GDic} under global smoothness is the same for both GD(1/L) and GD(LO). 
But under strong convexity and \emph{glocal} smoothness, we have the following improved iteration complexity for GD(LO).
\begin{theorem}
    \label{glocal-smooth-theorem}
    Assume that $f$ is glocally $(L, L_*, \delta)$-smooth and $\mu$-strongly convex. For all $t \geq 0$, let $w_t$ be the iterates of gradient descent as defined in~\eqref{gradient-descent} with step-size $\eta_t$ given by \eqref{exact-line-search}. Then $f(w_T) - f_* \leq \epsilon \;\text{for all}$
    \begin{equation*}
        T \geq
        \frac{L}{\mu}\log\left(\frac{\Delta_{0}}{\delta}\right) + \frac{L_*}{\mu}\log\left(\frac{\delta}{\epsilon}\right).
    \end{equation*}
\end{theorem}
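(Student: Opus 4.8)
The plan is to split the run of GD(LO) into two phases according to whether $f(w_t)-f_*$ exceeds $\delta$, apply the single-constant bound~\eqref{eq:GDic} within each phase (with $\bar L = L$ in the first and $\bar L = L_*$ in the second), and use the monotonicity of GD(LO) from Lemma~\ref{lem:GD} to glue them together. Throughout I would use that $\mu$-strong convexity implies the global gradient-domination inequality $\norm{\nabla f(u)}^2 \ge 2\mu\,(f(u)-f_*)$.

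\emph{Phase 1.} As long as $f(w_t)-f_* > \delta$, only global $L$-smoothness is in force, so Lemma~\ref{lem:GD} with $\bar L = L$ applies directly and yields the contraction $f(w_t)-f_* \le (1-\mu/L)\,(f(w_{t-1})-f_*)$. Arguing exactly as for~\eqref{eq:GDic} with target accuracy $\delta$ and initial gap $\Delta_0$, after at most $\frac{L}{\mu}\log(\Delta_0/\delta)$ iterations we reach an iterate $w_{t_0}$ with $f(w_{t_0})-f_* \le \delta$.

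\emph{Phase 2.} The crux is to show that from $t_0$ onward the iterates remain in the sublevel set $S_\delta := \{\,u : f(u)-f_* \le \delta\,\}$ and that exact line optimization~\eqref{exact-line-search} still extracts the progress $\frac{1}{2L_*}\norm{\nabla f(w_{t-1})}^2$, even though $f$ is only \emph{locally} $L_*$-smooth. Given $w_{t-1}\in S_\delta$, I would restrict $f$ to the search ray by setting $g(\eta) := f\big(w_{t-1}-\eta\,\nabla f(w_{t-1})\big)$, so $g(0) = f(w_{t-1}) \le f_*+\delta$ and $g'(0) = -\norm{\nabla f(w_{t-1})}^2 \le 0$, and let $\bar\eta$ be the smallest $\eta>0$ with $g(\eta) = f_*+\delta$ (with $\bar\eta = \infty$ if there is none). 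On $[0,\bar\eta)$ the ray lies in $S_\delta$, so local $L_*$-smoothness gives $g(\eta) \le g(0) - \eta\norm{\nabla f(w_{t-1})}^2 + \frac{L_*\eta^2}{2}\norm{\nabla f(w_{t-1})}^2$ there; since the right-hand side strictly decreases on $[0,1/L_*]$, a continuity argument rules out $\bar\eta \le 1/L_*$, so $\eta = 1/L_*$ is feasible for~\eqref{exact-line-search} and hence $f(w_t) \le g(1/L_*) \le f(w_{t-1}) - \frac{1}{2L_*}\norm{\nabla f(w_{t-1})}^2 \le f_*+\delta$. Combining with gradient domination gives $f(w_t)-f_* \le (1-\mu/L_*)\,(f(w_{t-1})-f_*)$; by induction $w_t\in S_\delta$ for all $t\ge t_0$. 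Applying the reasoning of~\eqref{eq:GDic} once more, now with smoothness constant $L_*$, initial gap at most $\delta$, and target accuracy $\epsilon$, shows that $\frac{L_*}{\mu}\log(\delta/\epsilon)$ further iterations suffice to reach $f(w_T)-f_* \le \epsilon$. Summing the two phase lengths gives the claimed bound.

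\emph{Main obstacle.} The only nonroutine step is the Phase 2 claim that line optimization keeps the iterates inside $S_\delta$ and still achieves $1/(2L_*)$-progress under merely local smoothness; the rest is bookkeeping on top of Lemma~\ref{lem:GD} and~\eqref{eq:GDic}. The monotonicity of GD(LO) is what permits chaining the two single-constant rates, and the ray-restriction/continuity argument is the mechanism that makes $L_*$ rather than $L$ the operative constant once the trajectory enters $S_\delta$. One should also dispatch the degenerate cases cleanly: if $\nabla f(w_{t-1}) = 0$ then $w_{t-1}$ is already optimal, and if $f(w_{t-1}) - f_* = \delta$ exactly one uses $g'(0) < 0$ to guarantee $\bar\eta > 0$ before running the argument above.
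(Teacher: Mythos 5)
Your proposal is correct and follows essentially the same route as the paper: a two-phase argument that applies Lemma~\ref{lem:GD} with $\bar L = L$ until the sublevel set $\{u : f(u)-f_* \le \delta\}$ is reached, uses monotonicity to stay there, and then shows that a step of $1/L_*$ along $-\nabla f(w_{t-1})$ cannot exit the set so that line optimization inherits the $\frac{1}{2L_*}\norm{\nabla f(w_{t-1})}^2$ decrease. Your ray-restriction argument via the first level crossing $\bar\eta$ and the quadratic descent bound is just a special-case rederivation (for the negative-gradient direction) of the paper's Proposition~\ref{sublevel-proposition} and Corollary~\ref{sublevel-GD-corollary}, which prove the same containment by locating the first stationary point of the directional derivative and invoking chord smoothness.
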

Here we outline the proof, but delay the formal proof until Section~\ref{app:recipe} after addressing a technical issue in Section~\ref{sec:capture}. The proof breaks the complexity of the algorithm into two parts. The first part bounds the number of iterations $t_\delta$ required to reach an error of $\delta$ for an $L$-smooth function starting from $w_0$. The second part is the number of iterations to reach an error of $\epsilon$ for an $L_*$-smooth function starting from an iteration $t_\delta$ after which the iterates stay within the local region (recall that we assume $\epsilon < \delta < \Delta_0$). \textcolor{blue}{Note that we have omitted the ceiling function $\lceil \cdot \rceil$ from both terms in this (and subsequent) results to present them in a more elegant way (we could alternately add +1 to the right side as $T$ must be an integer).}

The rate in Theorem~\ref{glocal-smooth-theorem} for GD(LO) under glocal smoothness is faster than the rate of GD(1/L) whenever $L_* < L$. We note that the GD(LO) rate can also be achieved by using GD(1/L) until we have $f(w_t)-f_* \leq \delta$, and then switching to using GD(1/L$_*$). Indeed, the analysis proceeds by arguing that GD(LO) globally makes at least as much progress as GD(1/L) applied to an $L$-smooth function and locally makes at least as much progress as GD(1/L$_*$) applied to a globally $L_*$-smooth function. However, unlike this GD(1/L)-then-GD(1/L$_*$) hybrid, note that GD(LO) does not need to know any of $L$, $L_*$, or $\delta$. Furthermore, GD(LO) simultaneously obtains the fast rate for all values of $\delta$ (and their corresponding $L_*$ values).

\subsection{Fast Local Progress of Descent Methods}
\label{sec:capture}

\begin{figure}[t]
    \centering

\begin{tikzpicture}[
        declare function={
                objective(\x)=      and(\x>=1, \x<1.5) * (pow(-\x, 3) + 2 * pow(\x, 2) - 0.5)    +
                (\x>=1.5) * (2 * pow(\x, 2) - (6.75 * \x) + 6.75 - 0.5) +
                and(\x>-1, \x<=1) * (pow(\x, 4) / 4 + 1/4) +
                (\x<-1) * (pow(\x, 2) + x + 0.5);
                LBound(\x)=     (0.74 + -1.4*(1.2 + \x) + 4*pow(\x + 1.2, 2));
                LStarBound(\x)=     ( 0.74 + -0.9*(1.2 + \x) + 0.6*pow(\x + 1.2, 2));
                lineSearch(\x)=     ( 0 + 9 * (\x + 1) / 20 );
            }
    ]
    \begin{axis}[
            width=0.99\textwidth,
            height=5.5cm,
            axis x line=none, axis y line=none,
            ymin=-0.3, ymax=1.5, ytick={0,...,2}, ylabel=$y$,
            xmin=-2, xmax=3, xtick={-2,...,2}, xlabel=$x$,
        ]

        \addplot[name path=LBound, domain=-2:3, samples=200, oracle, line width=1.5pt]{LBound(x)};
        \addplot[name path=functionB, domain=-0.45:3, samples=100, objective, line width=2pt]{objective(x)};
        \addplot[name path=functionG, domain=-2:-0.45, samples=100, objective, line width=2pt]{objective(x)};
        \addplot[name path=LStarBoundG, domain=-2:-0.45, samples=100, bound, line width=1.5pt]{LStarBound(x)};
        \addplot[name path=LStarBoundB, domain=-0.45:3, samples=100, bound, line width=1.5pt]{LStarBound(x)};

        \addplot fill between[ 
            of = LStarBoundG and functionG, 
            split, 
            every even segment/.style = {fill=none},
            every odd segment/.style  = {fill=green, fill opacity=0.3}
         ];

        \addplot fill between[ 
            of = LStarBoundB and functionB, 
            every odd segment/.style = {fill=none},
            every even segment/.style  = {fill=purple, fill opacity=0.3}
         ];

        \draw [line width=2pt, style=dotted, color=purple] (-0.45,0.4025) -- (-0.45, 0.11);
        \draw [line width=2pt, style=dotted, color=purple] (-1.2,0.75) -- (-1.2, 0.11);
        \draw [decorate, decoration={brace, amplitude=4pt, mirror}, line width=2pt] (-1.2, 0.12) -- (-0.45, 0.12) node [anchor=north, xshift=-7mm, yshift=-2mm, outer sep=1pt]{$\eta \|d\|^2 \leq \frac{1}{L_*} \lvert\langle \nabla f(w_t), d \rangle\rvert$};

        \node[label={180:$w_t$},circle,fill,inner sep=2pt] at (axis cs:-1.2,0.74) {};

        \node[label={0:$f_{d}(\eta)$}] at (axis cs:2.2,1) {};
        \node[label={10:$\hat \eta$},circle,fill,inner sep=2pt] at (axis cs:0,0.25) {};
        \node[label={0:$L_*$-Smooth}] at (axis cs:0.7,1.2) {};
        \node[label={0:$L$-Smooth}] at (axis cs:-0.7,1.2) {};

        \node[circle,fill,inner sep=2pt] (1) at (axis cs:1.33,0.685163) {};
        \node[circle,fill,inner sep=2pt] (2) at (axis cs:1.6875,0.5546875) {};
        
        \node[] (3) at (axis cs:1.6875,-0.0) (3) {$\tilde \eta$};

        \path[line width=2pt, ->]
        (3) edge[bend left] node [left] {} (1);
        
        \path[line width=2pt, ->]
        (3) edge[bend right] node [left] {} (2);

    \end{axis}
\end{tikzpicture}
    \caption{
        Illustration of \cref{sublevel-proposition}
        along the slice \( f_d(\eta) = f(w_t + \eta d) \).
        While \( f_d \) is \( L \)-smooth globally, it is only \( L_* \) smooth
        over an interval.
        The bound \( \eta \norm{d}^2 \leq \frac{1}{L_*} \lvert\langle \nabla f(w_t), d\rangle\rvert \)
        exactly requires that \( \eta \) is smaller than the best step-size
        under \( L_* \)-smoothness.
        Our proof proceeds by arguing that this best step-size cannot exceed
        the stationary points \( \tilde \eta \) or the global minimizer 
        \( \hat \eta \).
    }%
    \label{fig:admissible-steps}
\end{figure}
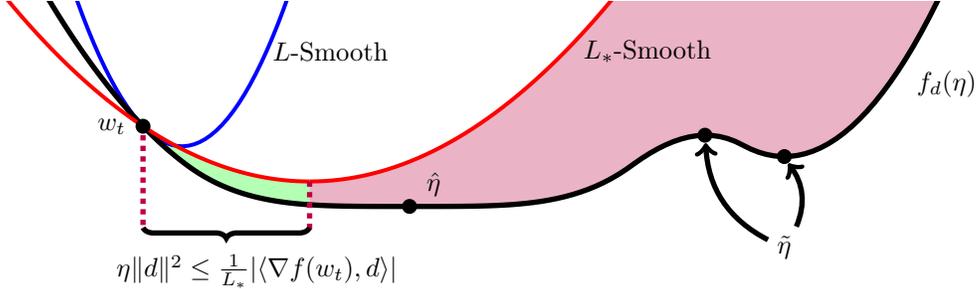

After first reaching the region of $L_*$-smoothness, the proof of Theorem~\ref{glocal-smooth-theorem} relies on bounding the function decrease achieved by GD(LO) with the amount achieved by GD(1/L$_*$) on a globally $L_*$-smooth function. However, a potential challenge is that a step size of $1/L_*$ may be too large to stay in the region of $L_*$ smoothness. Indeed, for this reason classic arguments for fast convergence on sublevel sets
either require the sublevel set to be closed~\cite[Chapter~9]{boyd2004} or require that the sublevel set is bounded and use  an $L_*$ over a bigger set than the sublevel set~\cite[Exercise~1.2.5]{bertsekas1997nonlinear}. However, in Appendix~\ref{progress-guarantees} we show the following result which implies that a step size of $1/L_*$ cannot leave the $L_*$-smooth region even if the sublevel set is open and even if the objective is non-convex  (see~\cref{fig:admissible-steps} for a visualization).
\begin{proposition}\label{sublevel-proposition}
    Assume that $f$ is $(L, L_{*}, \delta)$ glocally smooth, $f(w_{t}) - f_{*} \leq \delta$, and $d$ is a descent direction for $f$ at $w_t$, meaning that $\langle \nabla f(w_t), d \rangle < 0$. For any step-size $\eta_*$ s.t. $\eta_{*} \norm{d}^2 \leq \frac{1}{L_{*}} |\iprod{\nabla f(w_{t})}{d}|$, $f(w_{t} + \eta_{*}d) - f_{*} \leq \delta$.
\end{proposition}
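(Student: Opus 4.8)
The plan is to pass to the one-dimensional slice $\phi(\eta) := f(w_t + \eta d)$ for $\eta \ge 0$; it is continuously differentiable with $\phi(0) = f(w_t) \le f_* + \delta$ and $\phi'(0) = \langle \nabla f(w_t), d\rangle < 0$, and the goal is to show $\phi(\eta_*) \le f_* + \delta$. Since the $\delta$-sublevel set need not be convex or even connected, I will not try to argue directly that the whole segment $[w_t, w_t + \eta_* d]$ lies inside it; instead I track the \emph{escape time} $\tau := \inf\{\eta > 0 : \phi(\eta) > f_* + \delta\}$ (with $\tau = +\infty$ if that set is empty). If $\tau = +\infty$ we are done immediately, so assume $\tau < \infty$. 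A short argument using $\phi'(0) < 0$ rules out $\tau = 0$, and then continuity of $\phi$ forces $\phi(\tau) = f_* + \delta$ while $\phi(\eta) \le f_* + \delta$ for all $\eta \in [0, \tau)$. It therefore suffices to prove $\tau \ge \eta_*$.

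The key ingredient is that every point of the segment from $w_t$ to $w_t + \tau d$ lies in the $\delta$-sublevel set, so by glocal smoothness $\nabla f$ is $L_*$-Lipschitz along this segment. Since the descent lemma only needs Lipschitz continuity of the gradient on the segment joining the two points (not globally), it gives, for every $\eta \in [0, \tau]$,
\begin{equation*}
    \phi(\eta) \;\le\; q(\eta) \;:=\; \phi(0) + \eta\,\langle \nabla f(w_t), d\rangle + \frac{L_*}{2}\,\eta^2\norm{d}^2 .
\end{equation*}
Here $d \ne 0$ (as $\phi'(0) < 0$), so $q$ is a strictly convex quadratic with unique minimizer $\hat\eta := |\langle \nabla f(w_t), d\rangle| / (L_*\norm{d}^2)$, and the hypothesis $\eta_*\norm{d}^2 \le \frac{1}{L_*}|\langle \nabla f(w_t), d\rangle|$ (the bound drawn in \cref{fig:admissible-steps}) is exactly $\eta_* \le \hat\eta$. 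Consequently $q$ is strictly decreasing on $[0, \hat\eta]$; in particular $q(\eta) \le q(0) = \phi(0) \le f_* + \delta$ for every $\eta \in [0, \min\{\tau, \hat\eta\}]$.

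Now suppose, for contradiction, that $\tau < \eta_*$. Then $0 < \tau < \eta_* \le \hat\eta$, so strict monotonicity of $q$ gives $q(\tau) < q(0) \le f_* + \delta$, whence $\phi(\tau) \le q(\tau) < f_* + \delta$, contradicting $\phi(\tau) = f_* + \delta$. Therefore $\tau \ge \eta_*$, and since $\phi(\eta) \le f_* + \delta$ on $[0, \tau)$ and $\phi(\tau) = f_* + \delta$, we conclude $\phi(\eta_*) \le f_* + \delta$, i.e.\ $f(w_t + \eta_* d) - f_* \le \delta$. The main obstacle is precisely this escape-time bookkeeping: the clean ``the segment stays in the sublevel set'' reasoning is unavailable without convexity, and the content of the argument is the observation that the $L_*$-quadratic upper bound is itself still decreasing all the way up to the admissible step $\hat\eta \ge \eta_*$, so the slice cannot climb back above $f_* + \delta$ before $\eta_*$. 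The remaining minor points to dispatch are the degenerate case $L_* = 0$ (then $\hat\eta = +\infty$ and $q$ is affine with negative slope, so $\tau = +\infty$) and the verification that $\tau > 0$, both of which follow directly from $\phi'(0) < 0$ together with continuity.
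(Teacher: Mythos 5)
Your argument is correct, and it differs from the paper's proof in a mild but genuine way. The paper also works on the one-dimensional slice and argues by contradiction, but its ``bad point'' is the \emph{first stationary point} $\hat{\alpha}$ of the slice: assuming the step increases $f$, the directional derivative must vanish somewhere before $\alpha_*$, the slice strictly decreases up to $\hat{\alpha}$ (so that segment lies in the sublevel set), and then the $L_*$-Lipschitz bound on the directional derivative, $|g'(0)| = |g'(0)-g'(\hat{\alpha})| \leq L_*\hat{\alpha}$, forces $\hat{\alpha} \geq \alpha_*$, a contradiction. You instead track the \emph{first level-crossing time} $\tau$ and use the $L_*$ descent-lemma quadratic $q$ along the pre-escape segment, concluding that $q$ is still decreasing up to $\hat{\eta} \geq \eta_*$, so the slice cannot return to the level $f_*+\delta$ before $\eta_*$. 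Both routes share the essential idea of applying $L_*$-smoothness only on the portion of the ray already known to lie in the sublevel set (no convexity of the set needed); the paper's version is slightly more elementary (a pure Lipschitz/mean-value bound on $g'$, no quadratic) and in effect also shows the function value has not increased at the admissible step, while yours handles the $L_*=0$ degeneracy cleanly and yields $f(w_t+\eta_* d)\leq f(w_t)$ as a byproduct of $q(\eta_*)\leq q(0)$. One caveat: the proposition as printed has the condition $\eta_*\norm{d} \leq \frac{1}{L_*}|\iprod{\nabla f(w_t)}{d}|$, whereas you (following the figure and Corollary~\ref{sublevel-GD-corollary}) prove the scale-invariant form $\eta_*\norm{d}^2 \leq \frac{1}{L_*}|\iprod{\nabla f(w_t)}{d}|$; this is the version the paper itself effectively proves (its ``WLOG $\norm{d}=1$'' normalization is only harmless under the $\norm{d}^2$ reading), so your reading is the right one, but it is worth stating explicitly that this is the hypothesis you are using.
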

 We will use this result to prove convergence for various algorithms under different assumptions, as it holds for any algorithm that uses a descent direction. The next corollary is a specialization of Proposition \ref{sublevel-proposition} that bounds the function decrease of GD(LO) within the sublevel set (see Appendix \ref{progress-guarantees} for the proof).
\begin{corollary}
    \label{sublevel-GD-corollary}
     Assume that $f$ is $(L, L_{*}, \delta)$ glocally smooth. Additionally, assume that $w_{t}$ satisfies $f(w_{t}) - f_{*} \leq \delta$. Then for gradient descent with steps defined in \eqref{gradient-descent} with step size $\frac{1}{L_{*}}$, $f(w_{t+1}) - f_{*} \leq \delta$. In addition, the guaranteed progress is at least $f(w_{t+1}) \leq f(w_{t}) - \frac{1}{2L_{*}}\norm{\nabla f(w_{t})}^2$ if instead~\eqref{exact-line-search} is used to select the step size.
\end{corollary}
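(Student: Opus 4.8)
The plan is to derive Corollary~\ref{sublevel-GD-corollary} as a direct application of Proposition~\ref{sublevel-proposition} combined with the standard descent lemma for $L_*$-smooth functions. First I would handle the constant step size $\eta_t = 1/L_*$: take the descent direction $d = -\nabla f(w_t)$, which is indeed a descent direction whenever $\nabla f(w_t) \neq 0$ (and if $\nabla f(w_t) = 0$ the iterate does not move, so $f(w_{t+1}) = f(w_t)$ and both claims hold trivially). With this choice, $\langle \nabla f(w_t), d\rangle = -\norm{\nabla f(w_t)}^2$, so $|\langle \nabla f(w_t), d\rangle| = \norm{\nabla f(w_t)}^2 = \norm{d}^2$, and the step $w_{t+1} = w_t + (1/L_*) d$ satisfies $\eta_* \norm{d} = (1/L_*)\norm{d} = (1/L_*)|\langle \nabla f(w_t), d\rangle| / \norm{d}$, i.e. $\eta_* \norm{d}^2 = (1/L_*)|\langle \nabla f(w_t), d\rangle|$, which is exactly the hypothesis of Proposition~\ref{sublevel-proposition} (with equality). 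Hence $f(w_{t+1}) - f_* \leq \delta$.

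Next I would establish the progress bound for exact line optimization. Here the key observation is that Proposition~\ref{sublevel-proposition} guarantees that the point $w_t - (1/L_*)\nabla f(w_t)$ stays in the sublevel set $\{u : f(u) - f_* \leq \delta\}$, and more generally every point on the segment from $w_t$ to $w_t - (1/L_*)\nabla f(w_t)$ stays in that set (applying the proposition with each smaller step size $\eta_* \in [0, 1/L_*]$). Since $f$ is $L_*$-smooth on that sublevel set, the one-dimensional function $\phi(\eta) = f(w_t - \eta \nabla f(w_t))$ satisfies the descent-lemma inequality $\phi(\eta) \leq \phi(0) - \eta\norm{\nabla f(w_t)}^2 + (L_*/2)\eta^2 \norm{\nabla f(w_t)}^2$ for $\eta \in [0, 1/L_*]$, because all the relevant points lie in the region where $L_*$-smoothness holds. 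Evaluating the right-hand side at $\eta = 1/L_*$ gives $\phi(1/L_*) \leq \phi(0) - (1/(2L_*))\norm{\nabla f(w_t)}^2$. Since $\eta_t$ chosen by~\eqref{exact-line-search} minimizes $\phi$, we have $f(w_{t+1}) = \phi(\eta_t) \leq \phi(1/L_*) \leq f(w_t) - (1/(2L_*))\norm{\nabla f(w_t)}^2$, as claimed.

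The main obstacle is the subtlety that the descent lemma is only valid when the relevant iterates remain in the $L_*$-smooth region, which is precisely why Proposition~\ref{sublevel-proposition} is needed rather than a naive appeal to $L$-smoothness. The care required is to ensure the whole segment $\{w_t - \eta\nabla f(w_t) : \eta \in [0, 1/L_*]\}$ — not just its endpoint — lies in the sublevel set, so that the integral form of the descent inequality can be applied; this follows by noting that the hypothesis $\eta_* \norm{d} \leq (1/L_*)|\langle \nabla f(w_t), d\rangle|$ in Proposition~\ref{sublevel-proposition} is monotone in $\eta_*$ and hence holds for every intermediate step size. A minor point worth stating explicitly is that for exact line optimization the returned step size $\eta_t$ need not equal $1/L_*$, so the bound comes from comparing the optimal value to the value at the specific admissible step $1/L_*$; monotonicity of $f(w_t)$ then is not needed here, only the optimality of $\eta_t$.
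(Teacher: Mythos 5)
Your proposal is correct and follows essentially the same route as the paper's proof: apply Proposition~\ref{sublevel-proposition} with $d=-\nabla f(w_t)$ and $\eta_*=1/L_*$ to show the step stays in the $\delta$-sublevel set, invoke the descent lemma with constant $L_*$ to get the progress bound at step size $1/L_*$, and conclude for line optimization since it can only do better. The only difference is that you explicitly verify that the entire segment $\{w_t-\eta\nabla f(w_t):\eta\in[0,1/L_*]\}$ remains in the sublevel set so the $L_*$ descent inequality is legitimately applicable, a justification the paper's proof asserts without spelling out --- a welcome (and correct) bit of extra care rather than a different argument.
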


\subsection{Outline of Two-Phase Proof Strategies and Proof of Theorem~\ref{glocal-smooth-theorem}}
\label{app:recipe}

The argument used to formally show Theorem~\ref{glocal-smooth-theorem} follows a two-phase structure where we first bound the complexity to reach and stay in a local region, and then bound the complexity within the local region. We note that such two-phase proofs are common in other settings such as the classic analysis of Newton's method~\cite[see Chapter~9]{boyd2004} and analyses of gradient descent under $(L_0, L_1)$ non-uniform smoothness. Many of our analyses will also follow two-phase structures.  Thus, rather than repeating all logical steps explicitly, we will show that certain sufficient ingredients lead to a corresponding iteration complexity. In particular, consider the following procedure:
\begin{enumerate}
\item Define a ``region of interest''.
\item For an iterate $t_\delta$, give a condition under which all iterates $w_t$ for $t \geq t_\delta$ are guaranteed to stay within the region of interest. 
\item Based on global properties of the function and beginning from $w_0$, upper bound the number of iterations to reach such an iteration $t_\delta$. 
\item Based on local properties of the region of interest and beginning from $w_{t_\delta}$, upper bound  the number of iterations to reach an accuracy $\epsilon$. 
\item Use the properties of $t_\delta$ and the region of interest to remove the dependence on the particular iterate $w_{t_\delta}$. 
\end{enumerate}
Given the above steps, it follows that the overall iteration complexity is the sum of the bounds in points 3 and 5. We now follow this recipe to give the proof of Theorem~\ref{glocal-smooth-theorem}.
\begin{proof}[Proof of Theorem~\ref{glocal-smooth-theorem}]
By following the analysis structure outlined above we have:
\begin{enumerate}
\item Our region of interest under glocal smoothness is the set $\{w \; \vert \;  f(w) - f_* \leq \delta \}$.
\item Let $t_\delta$ be the first iteration such that $f(w_{t_\delta}) - f_* \leq \delta$. By monotonicity of $f(w_t)$ in terms of $t$ from~\eqref{eq:GD-progress}, we have that $f(w_t) - f_* \leq \delta$ for all $t \geq t_\delta$ so the iterates stay within the region.
\item Initialized at $w_0$, using Lemma \ref{lem:GD} with $\bar{L} = L$ and $\bar{\epsilon}=\delta$, GD(LO) is guaranteed to satisfy $f(w_t) - f_{*} \leq \delta$ after $t_\delta = \left\lceil \frac{L}{\mu}\log\left(\frac{f(w_0) - f_*}{\delta}\right) \right\rceil$ iterations.
\item Initialized at $w_{t_\delta}$, 
by Corollary \ref{sublevel-GD-corollary} the guaranteed progress of using an exact line search for $t \geq t_\delta$  is at least that of using a step size of $\frac{1}{L_{*}}$ on a globally $L_*$-smooth function. Thus we can use Lemma \ref{lem:GD} with $\bar{L} = L_{*}$ and $\bar{\epsilon}=\epsilon$ to obtain that GD(LO) starting from $\bar{w}_0 = w_{t_\delta}$ is guaranteed to have $f(w_t) - f_{*} \leq \epsilon$ after another $t = \left\lceil \frac{L_{*}}{\mu}\log\left(\frac{f(w_{t_\delta}) - f_*}{\epsilon}\right) \right\rceil$ iterations.
\item Using that $f(w_{t_\delta}) - f_* \leq \delta$, we can conclude that it is sufficient to have $t = \left\lceil \frac{L_{*}}{\mu}\log\left(\frac{\delta}{\epsilon}\right) \right\rceil$ additional iterations after $t_\delta$.
\end{enumerate}
\end{proof}

\subsection{Comparing Line Search to Acceleration}

A key advantage of the glocal assumption is that it allows us to compare algorithms that address local smoothness in different ways. In particular, \textcolor{red}{ignoring constant factors} we note that the rate of GD(LO) under glocal smoothness and strong convexity is faster than NAG(1/L) if
\begin{equation}
    \frac{L_*}{L} \textcolor{red}{\ll} \frac{{(\sqrt{\kappa})^{-1}}\log(\nicefrac{\Delta_{0}}{\epsilon}) - \log(\nicefrac{\Delta_{0}}{\delta})}{\log(\nicefrac{\delta}{\epsilon})}.
    \label{eq:NAGGDLO}
\end{equation}
This is possible if $L_* < L$, the condition number $\kappa = \frac{L}{\mu}$ is not too large, and $\delta$ is not too small. In words, we expect line search to outperform acceleration if the problem is not too badly conditioned and there is a large region around the minimizer with a smaller local smoothness constant than the global smoothness constant.

It is interesting to consider how the limiting behaviour of~\eqref{eq:NAGGDLO} agrees with the classic results and our intuitions. For example, NAG(1/L) is faster than GD(LO) for sufficiently large condition numbers. However, GD(LO) outperforms NAG(1/L) for any sufficiently small condition number (assuming $L_* < L$ and $\delta < \Delta_0$). As another example, as $\delta$ approaches $\Delta_0$ the algorithm spends more time in the local region and we have that GD(LO) outperforms NAG(1/L) if $L_* < \sqrt{L\mu}$. But as $\delta$ approaches $\epsilon$ we spend less time in the local region and NAG(1/L) is the preferred method.

\subsection{\textcolor{red}{Iteration Complexity of Logistic Regression under Glocal Smoothness}}
\label{sec:deltaMin}

We state Theorem~\ref{glocal-smooth-theorem} in terms of a specific value of $\delta$, but this bound holds for all valid values of $\delta$. Thus, in cases where we know how $L_*$ depends on $\delta$, we can obtain the tightest bound by minimizing across $\delta$ values. In this section, we minimize over $\delta$ for logistic regression using the bound in Section~\ref{sec:logReg}.

We first write out the classic iteration complexity bound~\eqref{eq:GDic} for GD(1/L) and GD(LO) on logistic regression based on the global smoothness bound $L\leq (1/4)\norm{X}^2$,
\begin{equation}
\label{eq:logRegIC}
T \geq \frac{\norm{X}^2}{\mu}\left(\frac 1 4\log\left(\frac{\Delta_0}{ \epsilon}\right)\right).
\end{equation}
We now give the result obtained under the glocal smoothness bound in Lemma~\ref{lem:logReg} by minimizing over $\delta$ in Theorem~\ref{glocal-smooth-theorem} (see Appendix~\ref{lr-glocal-proof}) subject to $\Delta_0 \geq \delta \geq \epsilon$.
\begin{proposition}
\label{prop:logistic-glocal-best}
Assume that $f$ is the logistic regression loss~\eqref{eq:logReg} and that $f$ is $\mu$-strongly convex. For all $t \geq 0$, let $w_t$ be the iterates of gradient descent as defined in~\eqref{gradient-descent} with step-size $\eta_t$ given by \eqref{exact-line-search}. Then, depending on the value of $\xi=1/4-\ell_*$ we have $f(w_T) - f_* \leq \epsilon \;\text{for all}$ 
\begin{align*}
\text{Case 1:}\quad & T\geq\frac{\norm{X}^2}{\mu}\left(\frac 1 4\log\left(\frac{\Delta_0}{ \epsilon}\right)\right) & \text{if}\; \xi \leq \epsilon,\\
\text{Case 2:}\quad &
T\geq\frac{\norm{X}^2}{\mu}\left((\ell_* + \Delta_0)\log\left(\frac{\Delta_0}{ \epsilon}\right)\right) & \text{if}\; \xi \geq \Delta_0\log(\Delta_0e/\epsilon),\\
\text{Case 3:} \quad & T\geq\frac{\norm{X}^2}{\mu}\left(\frac 1 4\log\left(\frac{\Delta_0}{\epsilon}\right) - \xi\frac{(\omega-1)^2}{\omega}\right) & \text{otherwise.}
\end{align*}
where $\omega$ is the  value of the Lambert $W$ function evaluated at the value $\xi e/\epsilon$ (which is unique because $\xi e/\epsilon$ is positive in Case 3).
\end{proposition}
We can think of $\xi$ as the amount that the local smoothness bound improves over the global smoothness bound.
In the first case the improvement $\xi$ is negative or is so small that the optimal value of $\delta$ is $\epsilon$ and there is no improvement over the global smoothness bound. In the second case the improvement $\xi$ is so large that the optimal value of $\delta$ is $\Delta_0$ and in this case we use the local bound starting from the first iteration. We note that the iteration complexity in the second case is smaller than the bound in the first case (when the condition holds).
In the third case the optimal value of $\delta$ is between $\epsilon$ and $\Delta_0$, and the (negative) second term is the improvement obtained over the global bound. 
We note that both Cases 2 and 3 give a strictly-improved iteration complexity bound for GD(LO) over the standard result~\eqref{eq:logRegIC}, and that Case 3 continuously interpolates between Case 1 (which reverts to using the global bound) and Case 2 (where we define the local region based on the starting iterate).


\section{Practical Algorithms for Step Size Selection}\label{Practical}
We can efficiently perform LO numerically for some problems like linear models and 2-layer neural networks~\cite{shea2024line}. However, neither GD(1/L) or GD(LO) are practical algorithms for many problems as it may not be possible in practice to compute the Lipschitz constant $L$ or perform LO efficiently. In this section we discuss practical methods to set the step size for general problems, and consider their iteration complexities under the glocal assumption. 

Under global smoothness, a backtracking line search procedure~\cite{beck2009fast} can achieve the rate of GD(1/L). This method starts with an estimate $L_0$ of $L$, \textcolor{red}{initializes $L_t = L_{t-1}$ for $t>0$, and doubles the value of $L_t$ whenever the Armijo  sufficient decrease condition~\citep{armijo1966minimization}
\begin{equation}
    \label{armijo-backtracking}
    f(w_{t}) \leq f(w_{t-1}) - \alpha \eta_{t} \norm{\nabla f(w_{t-1})}^2,
\end{equation}
is not satisfied with $\eta_t=1/L_t$ and $\alpha=1/2$}. If $L_0 \leq L$, this procedure achieves the GD(1/L) rate of $O((L/\mu)\log(\Delta_{0}/\epsilon))$. Unfortunately, backtracking does not achieve a faster rate under glocal smoothness since it never decreases its guess $L_t$ of $L$ (so it does not increase the step size $\eta_t$ in the local region). 

It is possible to obtain an improved iteration complexity over GD(1/L) under glocal smoothness in practice using a backtracking procedure with resets \cite{Scheinberg2014}. When the guess $L_t$ is reset to $L_0$ on each iteration, if $L_0 \leq L$ the rate of gradient descent is improved to $O((L/\mu)\log(\Delta_{0}/\delta) + (\max\{L_*, L_0\}/\mu)\log(\delta/\epsilon))$ under the glocal assumption. This backtracking-with-resets method achieves the fast rate of GD(LO) if $L_0 \leq L_*$. However, this method may require significant backtracking on each iteration. Backtracking with resets also ideally requires knowledge of $L_*$ as it has a worse complexity than GD(LO) if $L_0 > L_*$. Common variations to reduce the amount of backtracking are to use a line search that does not require monotonicity~\cite{grippo1986nonmonotone,raydan1997barzilai} or to propose more clever increases in the estimate of $L$~\cite{cavalcanti2024adaptive}.

We can achieve the rate of GD(LO) under glocal smoothness using a procedure that includes both a forwardtracking and a backtracking step~\cite{fridovich2019choosing}. We present the following formal convergence result for this method as follows.

\begin{theorem}
    \label{glocal-smooth-armijo-theorem}
    Assume that $f$ is glocally $(L, L_*, \delta)$-smooth and $\mu$-strongly convex. Assume $0 < \alpha \leq 1/2$ and $0 < \beta < 1$. For all $t \geq 0$, let $w_t$ be the iterates of gradient descent as defined in~\eqref{gradient-descent} with step-size $\eta_t$ satisfying \eqref{armijo-backtracking} with a forwardtracking-backtracking procedure. Then $f(w_T) - f_* \leq \epsilon \;\text{for all}$
    \begin{equation*}
        T \geq
        \frac{L}{2\beta\alpha\mu}\log\left(\frac{\Delta_{0}}{\delta}\right) + \frac{L_*}{2\beta\alpha\mu}\log\left(\frac{\delta}{\epsilon}\right).
    \end{equation*}
\end{theorem}

We give the proof of this theorem as well as more details about the forwardtracking step in Appendix \ref{armijo-line-search-proof}. Notice that the procedure nearly achieves the rate of GD(LO) in Theorem~\ref{glocal-smooth-theorem}, and matches the GD(LO) rate as $\beta$ approaches 1 and $\alpha$ approaches 1/2. Unfortunately, the iterations of the method above can be expensive since the step size could be increased or decreased many times within each iteration of the algorithm. However, many practical heuristics exist to reduce the cost of such algorithms~\cite{More1994,nocedal1999numerical}. 

We note that it is also possible to achieve a rate similar to GD(LO) under glocal smoothness without increasing the iteration cost if we know $f_*$ and use the Polyak step size~\cite{polyak1987introduction},
\begin{equation}
\label{polyak-stepsize}
    \eta_t = \frac{f(w_t) - f_*}{\norm{\nabla f(w_t)}^2}.
\end{equation}
\begin{theorem}
    \label{polyak-theorem}
    Assume that $f$ is glocally $(L, L_*, \delta)$-smooth and $\mu$-strongly convex. For all $t \geq 0$, let $w_t$ be the iterates of gradient descent as defined in~\eqref{gradient-descent} with step-size $\eta_t$ given by \eqref{polyak-stepsize}. Then $f(w_T) - f_* \leq \epsilon \;\text{for all}$
    \begin{equation*}        
         T \geq 
         4\left(\frac{L}{\mu}\log\left(\frac{L}{2}\frac{\norm{w_0 - w_*}^2}{\delta}\right) + \frac{L_*}{\mu}\log\left(\frac{L_{*}}{L}\frac{\delta}{\epsilon}\right)\right).      
    \end{equation*}
\end{theorem}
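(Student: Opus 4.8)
# Proof Proposal for Theorem~\ref{polyak-theorem}

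The plan is to follow the two-phase recipe laid out in Section~\ref{app:recipe}, using the ``region of interest'' $\{w : f(w) - f_* \leq \delta\}$ exactly as in the proof of Theorem~\ref{glocal-smooth-theorem}. The key structural difference from GD(LO) is that the Polyak step size is not known to produce monotone function values in general, so I must instead track a different potential. The standard analysis of the Polyak step size (see \cite{polyak1987introduction}, or the strongly-convex analysis in \cite{HazanK19}-style arguments) shows that $\norm{w_{t+1} - w_*}^2$ is monotonically decreasing, and that on an $\bar L$-smooth, $\mu$-strongly convex function one obtains a contraction of the form $\norm{w_{t+1} - w_*}^2 \leq (1 - \mu/(4\bar L))\norm{w_t - w_*}^2$ (the factor $4$ is where the leading $4$ in the theorem will come from). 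First I would establish this iterate contraction carefully using $\eta_t = (f(w_t)-f_*)/\norm{\nabla f(w_t)}^2$ together with the co-coercivity consequence $f(w_t) - f_* \geq \frac{1}{2\bar L}\norm{\nabla f(w_t)}^2$ and the strong convexity lower bound $f(w_t) - f_* \geq \frac{\mu}{2}\norm{w_t - w_*}^2$; this gives both that distances shrink and a per-step multiplicative decrease.

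\textbf{Phase 1 (global region).} Using $\bar L = L$, the iterate contraction gives $\norm{w_t - w_*}^2 \leq (1-\mu/(4L))^t \norm{w_0 - w_*}^2$. To convert a bound on $\norm{w_t-w_*}^2$ into a bound on $f(w_t)-f_*$ I use $L$-smoothness: $f(w_t) - f_* \leq \frac{L}{2}\norm{w_t - w_*}^2$. Hence $f(w_t)-f_* \leq \delta$ is guaranteed once $(1-\mu/(4L))^t \frac{L}{2}\norm{w_0-w_*}^2 \leq \delta$, i.e. after $t_\delta = \lceil \frac{4L}{\mu}\log(\frac{L}{2}\frac{\norm{w_0-w_*}^2}{\delta})\rceil$ iterations (using $\log(1/(1-x)) \geq x$). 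This matches the first term of the claimed bound.

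\textbf{Phase 2 (local region).} Once $f(w_{t_\delta}) - f_* \leq \delta$ I need that all later iterates stay in the region \emph{and} that the contraction improves to use $L_*$. Staying in the region follows from Proposition~\ref{sublevel-proposition}: with $d = -\nabla f(w_t)$, the Polyak step satisfies $\eta_t \norm{d}^2 = f(w_t) - f_* \leq \frac{1}{2L_*}\norm{\nabla f(w_t)}^2 \cdot (2L_*\eta_t\norm{d}^2/\norm{d}^2)$... more cleanly: I must check $\eta_t \norm{d} \leq \frac{1}{L_*}|\langle\nabla f(w_t), d\rangle| = \frac{1}{L_*}\norm{\nabla f(w_t)}^2/\norm{d}$, i.e. $\eta_t \leq 1/L_*$, which holds because $\eta_t = (f(w_t)-f_*)/\norm{\nabla f(w_t)}^2 \leq 1/(2L_*)$ by $L_*$-smoothness inside the region. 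So Proposition~\ref{sublevel-proposition} applies and the iterates remain trapped; moreover inside the region the contraction holds with $\bar L = L_*$. Then $\norm{w_t - w_*}^2 \leq (1-\mu/(4L_*))^{t-t_\delta}\norm{w_{t_\delta}-w_*}^2$, and converting via $L_*$-smoothness, $f(w_t)-f_* \leq \frac{L_*}{2}(1-\mu/(4L_*))^{t-t_\delta}\norm{w_{t_\delta}-w_*}^2$. To remove the dependence on the particular iterate $w_{t_\delta}$ I use strong convexity at $w_{t_\delta}$: $\frac{\mu}{2}\norm{w_{t_\delta}-w_*}^2 \leq f(w_{t_\delta})-f_* \leq \delta$, wait — that would give a $\mu$ in the denominator, whereas the theorem has $\frac{L_*}{L}$. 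Instead I should bound $\norm{w_{t_\delta}-w_*}^2$ by backing up one step: at the end of Phase~1 we actually had $\frac{L}{2}\norm{w_{t_\delta-1}-w_*}^2 \leq $ something, and distances are monotone, so $\frac{L}{2}\norm{w_{t_\delta}-w_*}^2 \leq \frac{L}{2}\norm{w_{t_\delta - 1}-w_*}^2 \leq \delta$ is \emph{not} quite what the stopping rule gave; rather the stopping rule gave $\frac{L}{2}(1-\mu/(4L))^{t_\delta}\norm{w_0-w_*}^2 \leq \delta$, hence $\norm{w_{t_\delta}-w_*}^2 \leq \frac{2\delta}{L}$. Plugging in: $f(w_t)-f_* \leq \frac{L_*}{2}\cdot\frac{2\delta}{L}(1-\mu/(4L_*))^{t-t_\delta} = \frac{L_*}{L}\delta(1-\mu/(4L_*))^{t-t_\delta}$. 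This is $\leq \epsilon$ after another $\frac{4L_*}{\mu}\log(\frac{L_*}{L}\frac{\delta}{\epsilon})$ iterations. Summing the two phase counts and pulling out the common factor $4$ yields the stated bound.

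\textbf{Main obstacle.} The delicate point is Phase~2's removal of the $w_{t_\delta}$ dependence in a way that produces the factor $\frac{L_*}{L}$ rather than a cruder $\frac{1}{\mu}$ or $\frac{L_*}{\mu}$ factor inside the logarithm. The trick above — using the \emph{distance-based} stopping criterion $\norm{w_{t_\delta}-w_*}^2 \leq 2\delta/L$ directly, rather than converting back through function values — is what makes it work, and I would double-check that the recipe's Step~5 is applied to the distance potential, not the function-value potential. A secondary technical point is verifying the $(1-\mu/(4\bar L))$ contraction cleanly in both regimes; this requires the Polyak-specific algebra combining the three inequalities (smoothness co-coercivity, strong convexity, and the definition of $\eta_t$), and care that the constant is genuinely $4$ and not, say, $2$, since the theorem statement commits to the leading $4$.
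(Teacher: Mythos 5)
Your overall route is the same as the paper's: the Hazan--Kakade-style Polyak contraction $\norm{w_{t+1}-w_*}^2 \leq (1-\frac{\mu}{4\bar L})\norm{w_t-w_*}^2$, a two-phase argument following Section~\ref{app:recipe}, an entry time $t_\delta$ determined by driving the \emph{distance} down to $\norm{w_{t_\delta}-w_*}^2 \leq 2\delta/L$, and the conversion $f(w_t)-f_* \leq \frac{L_*}{2}\norm{w_t-w_*}^2$ locally, which is exactly what produces the $\frac{L}{2}\norm{w_0-w_*}^2/\delta$ and $\frac{L_*}{L}\cdot\frac{\delta}{\epsilon}$ factors inside the logarithms. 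However, your Phase~2 containment argument has a genuine error. You claim $\eta_t = \frac{f(w_t)-f_*}{\norm{\nabla f(w_t)}^2} \leq \frac{1}{2L_*}$ ``by $L_*$-smoothness inside the region'' and then invoke Proposition~\ref{sublevel-proposition} on the actual Polyak step. The smoothness inequality runs the other way: $L_*$-smoothness gives $f(w_t)-f_* \geq \frac{1}{2L_*}\norm{\nabla f(w_t)}^2$, hence $\eta_t \geq \frac{1}{2L_*}$; the only general upper bound is $\eta_t \leq \frac{1}{2\mu}$ from strong convexity, which can far exceed $1/L_*$. So the hypothesis of Proposition~\ref{sublevel-proposition} need not hold for the Polyak step, and indeed the paper gives an explicit example (Appendix~\ref{polyak-increase-function}) where the Polyak step \emph{increases} the function value; you cannot certify that the iterates stay in the $\delta$ function-sublevel set this way.

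The repair is the argument you already set up but did not use for containment: define $t_\delta$ as the first iteration with $\frac{L}{2}\norm{w_{t_\delta}-w_*}^2 \leq \delta$ (your Phase~1 stopping rule delivers exactly this), and then use the monotone decrease of $\norm{w_t-w_*}$ together with \emph{global} $L$-smoothness, $f(w_t)-f_* \leq \frac{L}{2}\norm{w_t-w_*}^2 \leq \frac{L}{2}\norm{w_{t_\delta}-w_*}^2 \leq \delta$ for all $t \geq t_\delta$. This is precisely how the paper traps the iterates, and it is robust to the fact that the Polyak step may be large and non-monotone in $f$. Proposition~\ref{sublevel-proposition} (via Corollary~\ref{sublevel-GD-corollary}) should then be invoked only for the \emph{virtual} step of size $1/L_*$ used to justify the local inequality $\norm{\nabla f(w_t)}^2 \leq 2L_*(f(w_t)-f_*)$ inside the contraction with $\bar L = L_*$, not for the actual Polyak update. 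With that substitution your Phase~2 bookkeeping, including the removal of the $w_{t_\delta}$ dependence through $\norm{w_{t_\delta}-w_*}^2 \leq 2\delta/L$, matches the paper and yields the stated complexity.
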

See Appendix
\ifextended
\ref{polyak-proofs-function-values}
\else
\ref{app:polyak}
\fi
for the proof of this result. This result is similar to GD(LO), with a worse dependence inside the first logarithmic factor (but a better dependence inside the second logarithmic factor). This is because the Polyak step size does not guarantee that the loss decreases monotonically. Thus, the first term must bound the time until the algorithm will never leave the $\delta$ region.
\ifextended
We give a simple example where the Polyak step size increases the function in Appendix~\ref{polyak-increase-function}.
\fi 
In the next section, we show that a similar rate can be achieved without knowing $f^*$ using the AdGD step size~\cite{MalitskyM20}.

\section{Other Assumptions}\label{Other-assumptions}

The previous sections considered globally strongly-convex functions and glocal smoothness based on sublevel sets. In this section, we first define glocal smoothness based on the distance to the solution. We then consider alternatives to global strong-convexity.
\ifextended

\else
Due to space restrictions, we omit analyses in this section but they are included in the extended arXiv version.
\fi

\subsection{Iterates Version of Glocal Smoothness}
We previously defined glocal smoothness in terms of function values (Definition~\ref{glocal-smoothness}). However, some choices of step sizes like the Polyak step size 
guarantee a decrease in the distance of the iterate to the solution on each iteration but do not guarantee a decrease in the function value on each iteration. 
To highlight the advantage of algorithms whose progress is best measured in iterate distance, it is more natural to define glocal smoothness in terms of the iterate distance to the solution (rather than the optimality gap).
\begin{definition}
    \label{glocal-smoothness-iterates}
    A function $f$ is glocally $(L,L_*, \delta)$-smooth in iterates if $f$ is globally $L$-smooth, and locally $L_*$-smooth for all $w \in \mathbb{R}^d$ such that $\norm{w - \textcolor{red}{w_p}}^2 \leq \delta$ for some $\delta > 0$ \textcolor{red}{where $w_p$ is the projection of $w$ onto the solution set}.
\end{definition}
\textcolor{red}{In the case of strongly-convex functions, there is a unique solution and thus $w_p=w_*$ for all $w$.}
Similar to before, we will assume $\norm{w_{0} - (w_0)_p}^2 > \delta > \epsilon > 0$. For the remainder of this paper, we will state that $f$ is glocally smooth in iterates when we use this alternate definition. Under this modified version of the glocal smoothness assumption, we can obtain a simpler iteration complexity \ifextended
(see Appendix \ref{polyak-proofs-iterates})
\fi
of gradient descent with the Polyak step size. In particular, since the Polyak step size leads to monotonic decrease in the distance to the solution, by assuming glocal smoothness in the iterates we can remove the factors of $L$ and $L_*$ inside the logarithms in Theorem~\ref{polyak-theorem}.

We now consider the AdGD step size~\cite{MalitskyM20} for gradient descent in the context of glocal smoothness. The AdGD step size takes the following form: at iteration $t$, we set $\theta_{t-1} = \frac{\eta_{t-1}}{\eta_{t-2}}$ and,
\begin{equation}
    \label{MM-stepsize}
     \eta_t \!=\! \min \bigg( \sqrt{1 \!+\! \frac{\theta_{t\!-\!1}}{2}} \, \eta_{t\!-\!1}, \frac{w_t \!-\! w_{t\!-\!1}}{2 \norm{\nabla f(w_t)\! -\! \nabla f(w_{t\!-\!1})}} \bigg).
\end{equation} 
\ifextended
In Appendix \ref{MM-proof}, we show that
\else
We can show that
\fi
the AdGD step size under glocal smoothness achieves a similar rate to GD(LO) (without LO) and to the Polyak step size (without knowledge of $f_*$) beginning at the third iteration.
\begin{theorem}
    \label{MM-theorem}
    Assume that $f$ is glocally $(L, L_*, \delta)$-smooth in iterates and $\mu$-strongly convex. For all $t > 2$, let $w_t$ be the iterates of gradient descent as defined in~\eqref{gradient-descent} with step-size $\eta_t$ given by \eqref{MM-stepsize}. Then $\norm{w_T - w_*}^2 \leq \epsilon \;\text{for all}$
    \begin{equation*}        
         T \geq 
         O\left(\frac{L}{\mu}\log\left(\frac{\Phi_3}{\delta}\right) + \frac{L_*}{\mu}\log\left(\frac{\delta}{\epsilon}\right)\right),
    \end{equation*}
    where $\Phi_3 = \norm{w_3 - w_*}^2 + \frac{1}{2}(1 + \frac{2\mu}{L})\norm{w_{3} - w_{2}}^2 + 2\eta_{2}(1 + \theta_{2})(f(w_{2}) - f_*)$.
\end{theorem}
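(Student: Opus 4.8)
The plan is to instantiate the two-phase template of Section~\ref{app:recipe} with region of interest $\mathcal{R} = \{w : \norm{w - w_*}^2 \le \delta\}$, tracking throughout the AdGD potential
$\Phi_t := \norm{w_t - w_*}^2 + \tfrac{1}{2}\big(1 + \tfrac{2\mu}{\bar{L}}\big)\norm{w_t - w_{t-1}}^2 + 2\eta_{t-1}(1+\theta_{t-1})(f(w_{t-1}) - f_*)$,
where $\bar{L}$ is instantiated as $L$ in the first phase and $L_*$ in the second; with $\bar{L}=L$ and $t=3$ this is exactly the $\Phi_3$ of the statement. The central ingredient is a one-step contraction lemma for the AdGD step size~\eqref{MM-stepsize}: adapting the ``descent-free'' potential analysis of~\cite{MalitskyM20} from convex to $\mu$-strongly-convex objectives, I would show that if $f$ is $\bar{L}$-smooth and $\mu$-strongly convex then $\Phi_{t+1} \le (1 - c\,\mu/\bar{L})\,\Phi_t$ for $t \ge 3$ and an absolute constant $c$; this produces both the leading constant hidden in the $O(\cdot)$ and the $\bar{L}/\mu$ prefactor of each logarithm. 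Two consequences drive the rest of the argument: $\Phi_t$ is monotonically non-increasing, and since the last two terms of $\Phi_t$ are non-negative (as $f(w_{t-1}) \ge f_*$) we have $\norm{w_t - w_*}^2 \le \Phi_t$, so a bound $\Phi_t \le \delta$ both certifies $w_t \in \mathcal{R}$ and, by monotonicity, guarantees $w_s \in \mathcal{R}$ for all $s \ge t$. Unlike the function-value setting, no analogue of Proposition~\ref{sublevel-proposition} is needed here, since monotonicity of $\Phi_t$ already secures the ``stays in the region'' step.

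For the first phase I apply the contraction lemma with $\bar{L} = L$ starting from iteration $3$: after $t_\delta = O\big(\tfrac{L}{\mu}\log(\Phi_3/\delta)\big)$ iterations $\Phi_{t_\delta} \le \delta$, hence $w_t \in \mathcal{R}$ for all $t \ge t_\delta$. For the second phase I re-run the analysis from $w_{t_\delta}$ with $\bar{L} = L_*$. The key point is that $\mathcal{R}$ is a ball, hence convex, so for every $t \ge t_\delta + 1$ the segment $[w_{t-1}, w_t]$ lies inside $\mathcal{R}$; consequently $\norm{\nabla f(w_t) - \nabla f(w_{t-1})} \le L_* \norm{w_t - w_{t-1}}$ and all descent-lemma estimates used in the contraction lemma hold with constant $L_*$, so the AdGD recursion behaves exactly as it would on a globally $L_*$-smooth function. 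The contraction lemma with $\bar{L} = L_*$ then gives $\norm{w_T - w_*}^2 \le \Phi_T \le \epsilon$ after a further $O\big(\tfrac{L_*}{\mu}\log(\Phi_{t_\delta}/\epsilon)\big)$ iterations, and substituting $\Phi_{t_\delta} \le \delta$ removes the dependence on the unknown iterate $w_{t_\delta}$ (step 5 of the recipe). Summing the two phase lengths, and absorbing the first iterations $t \le 3$ (needed merely to define $\eta_2, \theta_2$, whence the restriction $t > 2$) into $\Phi_3$, yields the stated bound.

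I expect the main obstacle to be the strongly-convex AdGD contraction lemma itself: extending the Malitsky--Mishkin potential argument, which is delicate even for convex functions, and checking it still closes with the strong-convexity-modified coefficient $1 + \tfrac{2\mu}{\bar{L}}$ in $\Phi_t$. A secondary difficulty is the ``seam'' between the two phases. The AdGD step size carries memory through the growth term $\sqrt{1 + \theta_{t-1}/2}\,\eta_{t-1}$ in~\eqref{MM-stepsize}, so a step size left small by the $L$-smooth phase need not immediately reflect the larger local step sizes permitted by $L_*$; one must verify that $\eta_t = \Omega(1/\bar{L})$ in the relevant region (as required for the $(1 - c\mu/\bar{L})$ contraction) and that the transition costs only $O(1)$ extra iterations rather than polluting the bound. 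This transition cost, together with the one or two iterations near $t_\delta$ where $w_{t_\delta - 1}$ may lie just outside $\mathcal{R}$ so only the global constant $L$ controls $\norm{\nabla f(w_{t_\delta}) - \nabla f(w_{t_\delta - 1})}$, is precisely what the $O(\cdot)$ in the statement is there to absorb.
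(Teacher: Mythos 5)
Your proposal follows essentially the same route as the paper: the same potential $\Phi_t$, the same two-phase recipe with region $\{w : \norm{w-w_*}^2 \le \delta\}$, monotonicity of $\Phi_t$ plus $\norm{w_t-w_*}^2 \le \Phi_t$ replacing Proposition~\ref{sublevel-proposition}, and the strongly convex AdGD contraction $\Phi_{t+1} \le (1-\tfrac{\mu}{4\bar L})\Phi_t$ as the workhorse in both phases. Two remarks. First, you do not need to re-derive that contraction: it is precisely~\cite[Theorem~2]{MalitskyM20}, which the paper cites as Lemma~\ref{lem:GD-MM}; your concern about the step-size ``seam'' is absorbed by applying that lemma as a black box from $w_{t_\delta}$ onward (the paper only shifts the index by a couple of iterations to account for the method's memory). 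Second --- the one place your argument is under-justified --- convexity of the ball and containment of the segments $[w_{t-1},w_t]$ only give you the Lipschitz-gradient and descent-lemma estimates with constant $L_*$. The AdGD potential analysis also relies on the co-coercivity-type inequality of Lemma~\ref{smooth-convex-bound}, $f(y)-f(x) \le \iprod{\nabla f(y)}{y-x} - \tfrac{1}{2L}\norm{\nabla f(y)-\nabla f(x)}^2$, and this inequality is \emph{not} implied by $L_*$-smoothness restricted to a convex subset; over a region it needs the separate argument of Lemma~\ref{smooth-convex-bound-subset} (valid for points sufficiently interior to the set). The paper flags exactly this: it notes that the proof of Lemma~\ref{lem:GD-MM} uses Lemma~\ref{smooth-convex-bound} and invokes Lemma~\ref{smooth-convex-bound-subset} so that the local phase can be run with $\bar L = L_*$. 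With that substitution in place of your ``segments in the ball'' justification, your outline coincides with the paper's proof.
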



\subsection{Glocal Strong Convexity}
\label{glocal-strong-convex-section}

Similar to glocal smoothness (Definition \ref{glocal-smoothness}), one could make a similar assumption that the function $f$ is globally $\mu$-strongly convex and locally $\mu_*$-strongly convex when $f(x) - f_* \leq \delta$ for some $\delta > 0$ and $\mu_* > 0$. If $f$ glocally smooth and glocally strong convex, the iteration complexity for GD(LO) is improved to
\begin{equation}
    T \geq 
     \frac{L}{\mu}\log\left(\frac{\Delta_{0}}{\delta}\right) + \frac{L_*}{\mu_*}\log\left(\frac{\delta}{\epsilon}\right). 
     \label{eq:localSC}
\end{equation}
This is the complexity given in Theorem \ref{glocal-smooth-theorem} with $\mu$ in the second term replaced with $\mu_*$, and is an improved rate when $\mu_* > \mu$ (while $\mu_* \geq \mu$ by definition). An example where we can have $\mu_* > \mu$ is the Huber loss for robust regression~\cite{hastie2017elements}. Note that GD(1/L) is also able to adapt to a local $\mu_*$ even though it does not adapt to $L_*$, obtaining~\eqref{eq:localSC} but with $L_*$ replaced by $L$.


\subsection{Globally Convex Case}
\label{local-PL-global-convex-section}

A variation on the assumption of the previous section is functions that are locally $\mu_*$-strongly convex, but only convex globally (corresponding to the degenerate case of $\mu=0$). 
To analyze this case we define~\citep{nesterov2012cd, beck2013convergence}
\[R^2(\rho) = \max_{w \in \mathbb{R}^d, w_{*} \in W_{*}}  \{\norm{w - w_{*}}^2 : f(w) \leq \rho\},\]
for a given $\rho \in \mathbb{R}$. Note that $W_{*}$ refers to the set of minimizers of $f$, \textcolor{red}{which under local strong-convexity only contains the unique solution $w_*$.}
In this setting, GD(1/L) has a sublinear convergence rate leading to an $O((L/\epsilon)\norm{x_0 - x_*}^2)$ complexity, while 
GD(LO) has the following complexity (Appendix~\ref{gd-proofs}).
\begin{theorem}
    \label{global-convex-local-PL-theorem}
    Assume that $f$ is glocally $(L, L_*, \delta)$-smooth and locally $\mu_*$-strongly convex. Additionally, assume that $f$ is globally convex and coercive. For all $t \geq 0$, let $w_t$ be the iterates of gradient descent as defined in~\eqref{gradient-descent} with step-size $\eta_t$ given by \eqref{exact-line-search}. Then $f(w_T) - f_* \leq \epsilon \;\text{for all}$
    \begin{equation*}
     T \geq O\left(\frac{L}{\delta}R^2(f(w_{0})) + \frac{L_*}{\mu_*}\log\left(\frac{\delta}{\epsilon}\right)\right).
    \end{equation*}
\end{theorem}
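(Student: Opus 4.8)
The plan is to follow the two-phase recipe of Section~\ref{app:recipe}, with the region of interest being the sublevel set $\{w : f(w) - f_* \leq \delta\}$. \textbf{Phase 1 (global convex phase).} Since $f$ is globally $L$-smooth, globally convex and coercive, GD(LO) makes at least as much progress per step as GD(1/L), namely $f(w_{t}) \leq f(w_{t-1}) - \frac{1}{2L}\norm{\nabla f(w_{t-1})}^2$, and $f(w_t)$ decreases monotonically. The standard sublinear analysis of gradient descent on convex $L$-smooth functions gives $f(w_t) - f_* \leq \frac{2L \norm{w_0 - w_*}^2}{t}$ (using that $\norm{w_t - w_*} \leq \norm{w_0 - w_*}$ along the trajectory, which follows from the contraction property of the GD(1/L) step and carries over to line search). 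Replacing $\norm{w_0 - w_*}^2$ with the coercive-set bound $R^2(f(w_0))$, after $t_\delta = O\!\left(\frac{L}{\delta} R^2(f(w_0))\right)$ iterations we are guaranteed $f(w_{t_\delta}) - f_* \leq \delta$; by monotonicity all subsequent iterates stay in the region of interest.

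\textbf{Phase 2 (local strongly convex phase).} For $t \geq t_\delta$ we have $f(w_t) - f_* \leq \delta$, so by Corollary~\ref{sublevel-GD-corollary} the GD(LO) step guarantees $f(w_t) \leq f(w_{t-1}) - \frac{1}{2L_*}\norm{\nabla f(w_{t-1})}^2$, and Proposition~\ref{sublevel-proposition} ensures the iterates never leave the local region. Since $f$ is locally $\mu_*$-strongly convex on this set, the usual strong-convexity argument (as in Lemma~\ref{lem:GD} with $\bar L = L_*$ and $\mu$ replaced by $\mu_*$) gives the linear rate $f(w_t) - f_* \leq (1 - \mu_*/L_*)(f(w_{t-1}) - f_*)$ for all $t > t_\delta$, hence $f(w_t) - f_* \leq \epsilon$ after an additional $O\!\left(\frac{L_*}{\mu_*}\log\!\left(\frac{f(w_{t_\delta}) - f_*}{\epsilon}\right)\right)$ iterations. \textbf{Phase 3 (remove iterate dependence).} Using $f(w_{t_\delta}) - f_* \leq \delta$, this is at most $O\!\left(\frac{L_*}{\mu_*}\log\!\left(\frac{\delta}{\epsilon}\right)\right)$. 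Summing the two phase counts yields the claimed complexity.

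The main obstacle I anticipate is the Phase~1 bound: unlike the strongly convex case, GD(LO) on a merely convex function does not obviously keep $\norm{w_t - w_*}$ bounded, and the quantity $R(f(w_0))$ is introduced precisely to sidestep this. I would need to verify carefully that (i) line search preserves monotone function decrease so all iterates lie in the sublevel set $\{f \leq f(w_0)\}$, (ii) on that set $\norm{w_t - w_*} \leq R(f(w_0))$ by definition of $R$, and (iii) the textbook telescoping argument for convex $L$-smooth GD — which bounds $\sum_s \norm{\nabla f(w_s)}^2$ and couples it to $\norm{w_s - w_*}^2 - \norm{w_{s+1} - w_*}^2$ — goes through when we only have the per-step decrease guarantee of GD(1/L) rather than the exact GD(1/L) update. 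Coerciveness guarantees $X_*$ is nonempty and the sublevel set is bounded so $R(f(w_0)) < \infty$. A secondary subtlety is whether local strong convexity interacts cleanly with the definition of $f_*$ over $\mathbb{R}^d$; since $f$ is globally convex with a nonempty minimizer set, $f_*$ is well defined and the local strong convexity inequality anchored at $w_* \in X_*$ gives the needed $\frac{\mu_*}{2}\norm{w - w_*}^2 \leq f(w) - f_*$ type bound on the region of interest, which is all Phase~2 requires.
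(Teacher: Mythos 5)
Your Phases 2 and 3 match the paper exactly (Corollary~\ref{sublevel-GD-corollary} plus the argument of Lemma~\ref{lem:GD} with $\bar L = L_*$ and $\mu_*$, then $f(w_{t_\delta})-f_*\leq\delta$), but Phase 1 rests on a mechanism that does not work for line search, and this is precisely the step the paper's Lemma~\ref{lem:GD-convex} is designed to replace. The contraction $\norm{w_t - w_*}\leq\norm{w_0-w_*}$ is a property of the explicit GD(1/L) update and does \emph{not} carry over to GD(LO); the paper even remarks that the iterates of GD(LO) need not converge on convex functions~\cite{bolte2022curiosities}. Your proposed repair in item (iii) has the same problem: the textbook telescoping that couples $\sum_s\norm{\nabla f(w_s)}^2$ to $\norm{w_s-w_*}^2-\norm{w_{s+1}-w_*}^2$ requires expanding the square of the exact step $w_{s+1}=w_s-\tfrac1L\nabla f(w_s)$, so it cannot be run from the per-step decrease guarantee $f(w_t)\leq f(w_{t-1})-\tfrac{1}{2L}\norm{\nabla f(w_{t-1})}^2$ alone, which is all that GD(LO) gives you.

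The missing idea is to stay entirely in function values: by monotone decrease every iterate lies in the sublevel set $\{w: f(w)\leq f(w_0)\}$ (your items (i) and (ii) are correct and are exactly what the paper uses), and convexity with Cauchy--Schwarz gives $f(w_{t-1})-f_*\leq\norm{\nabla f(w_{t-1})}\,\norm{w_{t-1}-w_*}\leq\norm{\nabla f(w_{t-1})}\,R(f(w_0))$, with $R(f(w_0))<\infty$ by coercivity. Substituting this into the per-step decrease yields the recursion $\Delta_t\leq\Delta_{t-1}-\tfrac{1}{2LR^2(f(w_0))}\Delta_{t-1}^2$ for $\Delta_t=f(w_t)-f_*$, and the sequence lemma (Lemma~\ref{sequence-lemma}) then gives $\Delta_t\leq\tfrac{2LR^2(f(w_0))}{t+4}$, hence $t_\delta=O\bigl(\tfrac{L}{\delta}R^2(f(w_0))\bigr)$. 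One secondary imprecision: in Phase 2 what the linear rate actually needs is the PL-type inequality $\tfrac12\norm{\nabla f(w)}^2\geq\mu_*(f(w)-f_*)$, which follows from local strong convexity anchored at the current iterate over the (convex) $\delta$-sublevel set, not the quadratic-growth bound $\tfrac{\mu_*}{2}\norm{w-w_*}^2\leq f(w)-f_*$ you cite; invoking the argument of Lemma~\ref{lem:GD} as you do handles this, but the stated justification is the wrong direction of the strong-convexity inequality.
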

In this setting GD(LO) initially has a sublinear rate, with a worse constant of $R^2(f(w_0))$ compared to $\norm{x_0-x_*}^2$ for GD(1/L). Within the local region GD(1/L) and GD(LO) both obtain a linear convergence rate, but GD(LO) obtains a faster rate based on $L_*$ instead of $L$. It is unclear if the worse constant in the sublinear part is necessary for GD(LO), but for sufficiently large $\delta$ and/or small $\epsilon$ the GD(LO) complexity improves over the GD(1/L) complexity.
We note that the iterates do not necessarily converge for GD(LO) on convex functions~\cite{bolte2022curiosities}, but the function values are guaranteed to decrease and the iterates subsequently converge under local strong convexity. In Appendix \ref{gd-proofs} we also consider functions that are globally convex and only locally convex, rather than locally strongly convex. \textcolor{red}{Note that unlike the case when $f$ is locally strongly convex, the solution set $W_*$ may contain multiple minima if $f$ only satisfies local convexity.}


\subsection{PL Condition}
\label{sec:PL}

An alternative relaxation of strong convexity is
the PL inequality~\cite{polyak1963gradient,KarimiNS16}. The PL inequality holds if for some $\mu > 0$ it holds for all $u$ that
\begin{equation}\label{eq:pl-inequality}
    \frac{1}{2}\norm{\nabla f(u)}^2 \geq \mu(f(u) - f_*).
\end{equation}
Note that Proposition \ref{sublevel-proposition} does not require convexity and thus holds for PL functions. Indeed, strong convexity is only used to obtain the PL inequality in Theorem \ref{glocal-smooth-theorem}. Thus, for PL functions the iteration complexity of GD(LO) is the same as that given in Theorem \ref{glocal-smooth-theorem} (with $\mu$ being the PL constant). \textcolor{red}{Similar to the locally-convex case, we highlight that this improved complexity under the PL inequality is possible despite the fact that PL functions can have multiple minimizers.}

It is worth noting that if a function $f$ is $\mu$-strongly convex, then $f$ is also $\mu$-PL. But a function could satisfy the PL condition and not be convex. \textcolor{red}{Although the PL inequality is too strong to hold globally for neural networks, the PL inequality has been proposed as a reasonable local model of several neural network loss landscapes that explains the fast convergence of gradient-based methods. For example,~\citet{liu2022loss} show sufficiently-wide over-parameterized networks satisfy the PL inequality around their initialization point. More recently, \citet{AichAW26} show that a local variant of the PL inequality holds in locally quasi-convex regions satisfying a local neural tangent kernel (NTK) stability assumption.}


\section{Other Algorithms under Glocal Smoothness}\label{Other-algorithms}

Up to this point we have focused on gradient descent under various step sizes. We now consider showing how other algorithms can benefit from variants of glocal smoothness. 
We consider in particular random and greedy coordinate descent, stochastic gradient in the interpolation setting, and
NAG. In Appendix \ref{app:NLCG}, we also consider the non-linear conjugate gradient method when the Hessian is well-behaved in the local region.

\subsection{Coordinate Descent}
\label{coordinate-descent-section}

For each step $t$, coordinate descent takes steps of the form
\begin{equation}
    \label{coordinate-descent}
    w_{t+1} = w_{t} - \eta_{t} \nabla_{j_{t}} f(w_{t})e_{j_{t}},
\end{equation}
for some positive step size $\eta_t$. Here, $\nabla_{j} f(w)$ is the $j$th coordinate of $\nabla f(w)$ and $e_{j_{t}}$ refers to the  basis vector with entry $1$ at index $j_{t}$ and all other entries $0$. Coordinate descent and its variations \cite{nesterov2012cd, nutini2015coordinate, wright2015coordinate,locatello2018matching} have been explored extensively. The analysis of coordinate descent typically uses coordinate-wise variations on smoothness.
\begin{definition}
    \label{coordinate-smoothness}
    A function $f$ is coordinate-wise $L$-smooth if $\forall u  \in \mathbb{R}^d$, we have that for each $j \in [d]$ and any $\alpha \in \mathbb{R}$,
    \[
        |\nabla_j f(u + \alpha e_j) - \nabla_j f(u)| \leq L|\alpha|.
    \]
\end{definition}
We consider a corresponding coordinate-wise version of glocal smoothness.
\begin{definition}
    \label{glocal-smoothness-coordinate}
    A function $f$ is coordinate-wise glocally $(L,L_*, \delta)$-smooth if $f$ is globally coordinate-wise $L$-smooth, and locally coordinate-wise $L_*$-smooth for all $u \in \mathbb{R}^d$ such that $f(u) - f_* \leq \delta$.
\end{definition}
Line optimization is also a natural choice for coordinate descent, albeit with a restriction to single-coordinate updates,
\begin{equation}
    \label{exact-line-search-coordinate}
    \eta_{t} \in \argmin_{\eta} f(w_{t} - \eta \nabla_{j_{t}} f(w_{t})e_{j_{t}}).
\end{equation}
We first consider randomized coordinate descent where $j_t$ is selected such that each coordinate $j$ is equally probable.
Under this uniform random sampling strategy, the iteration complexity for coordinate descent with a step size of $1/L$ under the standard global smoothness assumption is $\frac{dL}{\mu}\log(\frac{\Delta_0}{\epsilon})$~\cite{nesterov2012cd}. Under coordinate-wise glocal smoothness, we obtain the following improved iteration complexity with high probability using LO (see Appendix~\ref{coordinate-descent-proof}).
\begin{theorem}
    \label{coordinate-descent-theorem}
    Assume that $f$ is coordinate-wise glocally $(L, L_*, \delta)$-smooth and $\mu$-strongly convex. For all $t \geq 0$, let $w_t$ be the iterates of coordinate descent as defined in~\eqref{coordinate-descent} with $j_t$ selected from the uniform distribution over $\{1,2,\dots,d\}$ and step-size $\eta_t$ given by \eqref{exact-line-search-coordinate}. Define $X_{\delta}$ as the event that $f(w_{t_{\delta}}) - f_{*} \leq \delta$, where $t_\delta = \lceil \frac{dL}{\mu}\log\left(\frac{\Delta_{0}}{\delta\zeta}\right) \rceil$. Then $E[f(w_T)\;|\;X_{\delta}] - f_* \leq \epsilon \;\text{for all}$
    \begin{equation*}
        T \geq
        \frac{dL}{\mu}\log\left(\frac{\Delta_{0}}{\delta \zeta}\right) + \frac{dL_*}{\mu}\log\left(\frac{\delta}{\epsilon}\right),
    \end{equation*}
    with probability $1 - \zeta$, where $d$ is the problem dimension and $E[f(w_T)\;|\; X_{\delta}]$ denotes the expectation over the randomness in the coordinate selection, conditioned on the event $X_{\delta}$.
\end{theorem}
A challenge in analyzing randomized coordinate descent is that it is not deterministically guaranteed that the iterates of coordinate descent will enter the $\delta$ sublevel set by a given iteration. Our analysis thus conditions on this happening with high probability after a relevant number of iterations. Note that we only need to condition on the algorithm reaching the sublevel set at a single iteration, since the algorithm subsequently stays within the sublevel set by the monotonic decrease in function values.

We now consider the canonical greedy coordinate descent method, where we set $j_{t} \in\argmax_{j} |\nabla_{j} f(w_{t})|$ \cite{nutini2015coordinate}. In Appendix~\ref{coordinate-descent-proof}, using strong convexity as measured in the $1$-norm, we show the following improved iteration complexity under glocal smoothness when selecting the coordinate greedily.
\begin{theorem}
    \label{coordinate-descent-theorem-greedy}
    Assume that $f$ is coordinate-wise glocally $(L, L_*, \delta)$-smooth and $\mu_1$-strongly convex in the $1$-norm. For all $t \geq 0$, let $w_t$ be the iterates of coordinate descent as defined in~\eqref{coordinate-descent} with the greedy rule $j_{t} \in\argmax_{j} |\nabla_{j} f(w_{t})|$ and step-size $\eta_t$ given by \eqref{exact-line-search-coordinate}. Then $f(w_T) - f_* \leq \epsilon \;\text{for all}$
    \begin{equation*}
        T \geq
        \frac{L}{\mu_1}\log\left(\frac{\Delta_{0}}{\delta}\right) + \frac{L_*}{\mu_1}\log\left(\frac{\delta}{\epsilon}\right).
    \end{equation*}
\end{theorem}
No high probability bound is needed here as the algorithm is deterministic. 


\subsection{Stochastic Gradient Descent}
In this section, we consider the stochastic gradient descent (SGD) algorithm \cite{robbins1951stochastic}. For simplicity, we will consider functions that can be written as a finite sum, $f(w) = \sum_{i = 1}^n f_i(w)$. On iteration $t$ SGD takes steps of the form
\begin{equation}
    \label{stochastic-gradient-descent}
    w_{t+1} = w_{t} - \eta_{t} \nabla f_{i_t}(w_{t}),
\end{equation}
where $i_{t}$ is sampled with equal probability from $\{1, \dotsc n\}$. An additional assumption that we use in our analysis is interpolation, which holds for many modern over-parameterized models~\cite{zhang2016understanding,ma2018interpolation}.
\begin{definition}
    \label{interpolation}
    A function $f$ satisfies interpolation if $\nabla f(w_*) = 0$ implies that $\nabla f_i(w_*) = 0$ for $i \in [n]$.
\end{definition}
That is, if $w_*$ is a stationary point of $f$, then $w_*$ is also a stationary point of each individual function $f_i$ for all $i \in [n]$.
However, note that we do not assume the individual $f_i$ are strongly-convex and the individual $f_i$ may have different sets of stationary points. 
We analyze a variant on the stochastic Armijo rule \cite{sls} for step size selection, where on each iteration we initialize the step size $\eta$ to some value $\eta_\text{max}$ and divide it in half until we have
\begin{equation}
    \label{stochastic-line-search}
    f_{i_{t}}(w_{t+1})\leq f_{i_{t}}(w_{t}) - \frac{1}{2\eta} \norm{\nabla f_{i_{t}}(w_{t})}^2.
\end{equation}
We give the following iteration complexity for SGD. See Appendix \ref{stochastic-proof} for the proof.

\begin{theorem}
    \label{stochastic-theorem}
    Assume that $f$ is $\mu$-strongly convex and that each $f_i$ is convex and glocally $(L_{\text{max}}, L_{\text{max},*}, \delta)$-smooth in iterates.  Furthermore, assume that $f$ satisfies the interpolation assumption given in Definition \ref{interpolation}. For all $t \geq 0$, let $w_t$ be the iterates of stochastic gradient descent as defined in~\eqref{stochastic-gradient-descent} with step-size $\eta_t$ given by backtracking from some $\eta_\text{max} \geq 1/L_{\text{max},*}$ by dividing the step size in half until we satisfy~\eqref{stochastic-line-search}. Define $X_{\delta}$ as the event that $\norm{w_{t_{\delta}} - w_{*}} \leq \delta$, where $t_\delta = \left\lceil\frac{2L_{\text{max}}}{\mu}\log\left(\frac{\norm{w_0 - w_*}^2}{\delta\zeta}\right)\right\rceil$. Then $E[\norm{w_{T} - w_{*}} \;|\;X_{\delta}] \leq \epsilon \;\text{for all}$
    \begin{equation*}
        T \geq
        2\left(\frac{L_{\text{max}}}{\mu}\log\left(\frac{\norm{w_0 - w_*}^2}{\delta \zeta}\right) + \frac{L_{\text{max},*}}{\mu}\log\left(\frac{\delta}{\epsilon}\right)\right).
    \end{equation*}
    with probability $1 - \zeta$, where $E[\norm{w_{T} - w_{*}}\;|\; X_{\delta}]$ is a random variable through $X_{\delta}$.
\end{theorem}
A key property that allows us to show this result is that the interpolation assumption implies that the iterates will deterministically move closer to the minimizer on each step (for a sufficiently small constant step size). 
Similar to the coordinate descent case, we use that the iterates reach the local region after a certain number of iterations with high probability and then stay within this region.


\subsection{Nesterov's Accelerated Gradient Method}
\label{acceleration-section}

In this section, we show that glocal smoothness can also lead to improved rates
for Nesterov's accelerated gradient method (NAG) when combined with an Armijo
line search.
We present our results for the three-sequence variant of NAG,
which has the following update,
\begin{equation}
    \begin{aligned}\label{accelerated-gradient}
        \yk
         & = \wk + \frac{\sqrt{q_t}}{1 + \sqrt{q_t}}\rbr{\zk - \wk}              \\
        \wkk
         & = \yk - \etak \grad(\yk)                                              \\
        \zkk
         & = (1 - \sqrt{q_t}) \zk + \sqrt{q_t} (\yk - \frac{1}{\mu} \grad(\yk)),
    \end{aligned}
\end{equation}
where \( q_t = \eta_t \cdot \mu \) and the step-size \( \eta_t \) is computed
using backtracking line-search on a two-step Armijo condition (see \cite[Eq. 2.2.7]{nesterov2018lectures}),
\begin{equation}\label{eq:armijo-ls}
    \begin{aligned}
        \yk        & = \wk + \frac{\sqrt{q_t}}{1 + \sqrt{q_t}}\rbr{\zk - \wk} \\
        f(w_{t+1}) & \leq f(y_t) - \frac{\eta_t}{2}\norm{\nabla f(y_t)}_2^2.
    \end{aligned}
\end{equation}
Notice that the line search is performed through the computation of $y_{t}$,
rather than just on \( \wk \).
We prove in \cref{lemma:acceleration-reformulation} that
\cref{accelerated-gradient} is formally equivalent to the standard
momentum formulation of NAG (despite the use of a variable step size).
Now we are ready to give the iteration complexity for NAG with backtracking line search under the glocal smoothness assumption.
\begin{theorem}
    \label{acceleration-theorem}
    Assume that $f$ is glocally $(L, L_*, \delta)$-smooth and $\mu$-strongly
    convex.
    Let $w_t$ be the iterates of NAG as defined in~\eqref{accelerated-gradient}
    with step-size $\eta_t$ given by backtracking from some $\eta_\text{max} \geq 1/L_*$ by dividing the step size in half until we satisfy~\eqref{eq:armijo-ls}.
    Then $f(w_T) - f_* \leq \epsilon \;\text{for all}$
    \[
        \begin{aligned}
            T & \geq
            \sqrt{\frac{2L}{\mu}}\log\rbr{\frac{L}{\mu}\frac{\rbr{\Delta_{0} + \frac{\mu}{2}\norm{w_0 - w_*}^2}}{\delta}} +
            \sqrt{\frac{2L_*}{\mu}}\log\rbr{\frac{\mu}{L}\frac{\delta}{\epsilon}}.
        \end{aligned}
    \]
\end{theorem}
See Appendix~\ref{acceleration-proof} for the proof, which extends the
potential function approach of previous work \cite{daspremont2021acceleration}.
Compared to gradient descent (Theorem \ref{glocal-smooth-theorem}), NAG
with Armijo line search must converge to the  sub-optimality 
\( \tilde{\delta} = L \delta / \mu \) before the complexity improves due to local \( L_* \)-smoothness.
This is because the extrapolation sequence \( \yk \) converges \( O(L / \mu) \) slower than \( \wk \), but must also enter the \( \delta \) sub-optimal set before the line-search can provably benefit from the improved smoothness. 



\section{Conclusion}\label{Discussion}
Compared to popular global smoothness assumptions, we believe that \textcolor{red}{worst-case bounds} under glocal smoothness better reflect the performance of numerical optimization algorithms in practice than classic global bounds. 
Further, a key advantage of glocal smoothness compared to many previous local smoothness assumptions is that glocal smoothness allows comparisons between algorithms. Indeed, we have given a precise condition under which ``line search can really help'' in the sense that using line search with the basic GD method can lead to a significantly improved iteration complexity bound than using acceleration without line search. 

We have shown that glocal assumptions lead to improved iteration complexities in a wide variety of settings. Nevertheless, there remains a huge variety of open problems related to the glocal assumption. We highlight many open problems in Appendix~\ref{open-problems}, \textcolor{red}{not only related to optimization algorithms but also the relevance of glocal assumptions to neural networks}. We encourage theoreticians to consider these open problems as they may not only lead to better iteration complexities but also to algorithms that perform better in practice.

\subsubsection*{Acknowledgments}
We thank Betty Shea and Reza Babanezhad for providing valuable insights on this project and Adrien Taylor for their detailed correspondence and support regarding the accelerated results. We also thank Lieven Vandenberghe for clarifications regarding Section~\ref{sec:capture}, Stephen Vavasis for highlighting the local properties of the Huber loss, and Katya Scheinberg for suggesting the glocal name. The work was partially supported by the Canada CIFAR AI Chair Program and NSERC Discovery Grant RGPIN-2022-036669. Aaron Mishkin was supported by NSF Grant DGE-1656518 and by NSERC Grant PGSD3-547242-2020.

\bibliography{main}
\bibliographystyle{style_files/tmlr}

\appendix
\ifextended
\pagebreak
\section*{Appendix Table of Contents}

\begin{itemize}
    \item Appendix \ref{non-uniform-section}: Connection to $(L_{0}, L_{1})$ Non-Uniform Smoothness
    \item Appendix \ref{lr-glocal-proof}: Logistic Regression Glocally Smooth Results
    \item Appendix \ref{progress-guarantees}: Progress Guarantee Lemma
    \item Appendix \ref{armijo-line-search-proof}: Armijo Line Search Results
    \item Appendix \ref{polyak-proofs}: Polyak Step Size Convergence Results
    \item Appendix \ref{MM-proof}: AdGD Convergence Results
    \item Appendix \ref{gd-proofs}: Convex Objectives
    \item Appendix \ref{coordinate-descent-proof}: Coordinate Descent Convergence Results
    \item Appendix \ref{stochastic-proof}: Stochastic Gradient Descent Convergence Results
    \item Appendix \ref{acceleration-proof}: Acceleration Convergence Results
    \item Appendix \ref{app:NLCG}: Non-Linear Conjugate Gradient Convergence Results
    \item Appendix \ref{additional-lemmas}: Additional Lemmas
    \item Appendix \ref{open-problems}: Open Problems
\end{itemize}
\fi

\section{Connection to $(L_{0}, L_{1})$ Non-Uniform Smoothness}\label{non-uniform-section}
We focus on a specific variant of the $(L_{0}, L_{1})$ non-uniform smoothness assumption~\cite{chen2023generalized}, and show that it implies the glocal smoothness assumption if the function under consideration is either $G$-Lipschitz or $L$-smooth.

\begin{definition}{(~\cite[Definition 2.1]{chen2023generalized})}
A function $f$ is $(L_0, L_1)$ non-uniform asymmetric smooth if $\forall x,y$, 
\begin{align*}
\norm{\nabla f(x) - \nabla f(y)} & \leq (L_0 + L_1 \norm{\nabla f(x)} ) \, \norm{x - y}.
\end{align*}
\label{def:nonuniform-smoothness}
\end{definition}
The above assumption does not require twice-differentiability, unlike the non-uniform smoothness assumption given in prior work~\cite{zhang2019gradient}. However, if $f$ is also twice differentiable, Definition \ref{def:nonuniform-smoothness} is stronger than this definition (refer to~\cite[Theorem 1]{chen2023generalized}).

\begin{proposition}
    Assume that $f$ satisfies $(L_{0}, L_{1})$ non-uniform smoothness. Then for any $\delta$,
    \begin{enumerate}
        \item If $f$ is $G$-Lipschitz, then $f$ is $(L_{0} + L_{1}G, L_{0} + L_{1}(4L_{1}\delta + 2 \, \sqrt{L_{0}\delta}), \delta)$-glocal smooth.
        \item If $f$ is $L$-smooth, then $f$ is $(L, L_{0} + L_{1}(4L_{1}\delta + 2 \, \sqrt{L_{0}\delta}), \delta)$-glocal smooth.
    \end{enumerate} 
\end{proposition}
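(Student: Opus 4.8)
The plan is to verify the two ingredients of glocal smoothness (Definition~\ref{glocal-smoothness}) separately: first that $f$ is globally smooth with the stated first constant, and second that $f$ is $L_*$-smooth on the sublevel set $\{u : f(u) - f_* \leq \delta\}$ with $L_* = L_0 + L_1(4 L_1 \delta + 2\sqrt{L_0 \delta})$. The global part is immediate in each case: if $f$ is $G$-Lipschitz then $\norm{\nabla f(x)} \leq G$ for all $x$, so Definition~\ref{def:nonuniform-smoothness} collapses to $\norm{\nabla f(x) - \nabla f(y)} \leq (L_0 + L_1 G)\norm{x - y}$, i.e. $f$ is globally $(L_0 + L_1 G)$-smooth; if instead $f$ is $L$-smooth then $L$ is the global constant by assumption. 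Since the local constant claimed is the same in both cases, it remains only to establish the gradient bound $\norm{\nabla f(u)} \leq 4 L_1 \delta + 2 \sqrt{L_0 \delta}$ on the $\delta$-sublevel set, from which the local smoothness constant follows at once.

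First I would upgrade non-uniform smoothness to a descent lemma. For any $x, y$, write $f(y) - f(x) = \int_0^1 \iprod{\nabla f(x + t(y - x))}{y - x}\, dt$; subtracting $\iprod{\nabla f(x)}{y - x}$, bounding by Cauchy--Schwarz, applying Definition~\ref{def:nonuniform-smoothness} with first argument $x$ (so that $\norm{\nabla f(x + t(y-x)) - \nabla f(x)} \leq (L_0 + L_1 \norm{\nabla f(x)})\, t\norm{y - x}$), and integrating $t$ from $0$ to $1$ yields
\[
f(y) \leq f(x) + \iprod{\nabla f(x)}{y - x} + \frac{1}{2}(L_0 + L_1\norm{\nabla f(x)})\norm{y - x}^2,
\]
with no restriction on $\norm{y - x}$. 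Keeping the anchor at $x$ throughout is what prevents a circular dependence on $\norm{\nabla f(x + t(y - x))}$ and avoids any step-length constraint.

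Next, fix $u$ with $f(u) - f_* \leq \delta$, apply the descent lemma along the gradient step $y = u - \eta \nabla f(u)$ with $\eta = 1/(L_0 + L_1\norm{\nabla f(u)})$, and use $f(y) \geq f_*$ to obtain $\norm{\nabla f(u)}^2 \leq 2\delta(L_0 + L_1\norm{\nabla f(u)})$. Writing $g = \norm{\nabla f(u)}$, either $2\delta L_0 \geq 2\delta L_1 g$, whence $g^2 \leq 4\delta L_0$ and $g \leq 2\sqrt{L_0 \delta}$, or $2\delta L_1 g \geq 2\delta L_0$, whence $g^2 \leq 4\delta L_1 g$ and $g \leq 4 L_1 \delta$; in either case $g \leq 4 L_1 \delta + 2\sqrt{L_0 \delta}$. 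Finally, for any $u, v$ in the sublevel set, Definition~\ref{def:nonuniform-smoothness} with first argument $u$ together with this bound gives $\norm{\nabla f(u) - \nabla f(v)} \leq (L_0 + L_1 g)\norm{u - v} \leq (L_0 + L_1(4 L_1 \delta + 2\sqrt{L_0 \delta}))\norm{u - v}$, establishing the claimed $L_*$ and hence both parts of the proposition.

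I expect the descent-lemma step to be the main obstacle, and specifically the bookkeeping needed to keep the anchor point fixed at $x$ when invoking non-uniform smoothness inside the integral — this is what buys an unconditional descent lemma (as opposed to one valid only for small $\norm{y - x}$). The remaining work is the elementary quadratic case split above, and the only regularity assumed is that $f \in C^1$ (implicit in Definition~\ref{def:nonuniform-smoothness}) and that $f_* = \inf f$ is finite, so that $f(y) \geq f_*$ may be used.
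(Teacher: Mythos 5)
Your proof is correct, and it reaches the stated constants by a genuinely different route for the key step. Where the paper imports the descent lemma of Chen et al.\ (which carries an $\exp(L_{1}\norm{x-y})$ factor) and then runs the Gorbunov-style argument with the step $y = x - \nu\nabla f(x)/L(x)$ and the choice $\nu e^{\nu}=1$, you derive an exponential-free descent lemma directly from Definition~\ref{def:nonuniform-smoothness} by integrating along the segment with the anchor held at $x$, so that the Lipschitz modulus $L_{0}+L_{1}\norm{\nabla f(x)}$ never depends on the moving point. Given the definition exactly as stated in the paper (for all $x,y$, with the constant pegged to the gradient at the first argument and no restriction on $\norm{x-y}$), this is airtight, and it even yields the slightly tighter intermediate bound $\norm{\nabla f(u)}^{2}\le 2\delta\,(L_{0}+L_{1}\norm{\nabla f(u)})$ versus the paper's $4\delta\,(L_{0}+L_{1}\norm{\nabla f(u)})$; your two-case split and the paper's quadratic-formula step then give the same final constant $4L_{1}\delta+2\sqrt{L_{0}\delta}$, and the global parts of both arguments are identical. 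The trade-off is robustness: the paper's route via the cited lemma remains valid under the weaker, distance-restricted variants of $(L_{0},L_{1})$-smoothness that appear in the literature (this is what the exponential factor is paying for), whereas your unconditional descent lemma leans on the unrestricted form of the definition; since that is the form assumed in the proposition, this is a remark about generality, not a gap. One cosmetic difference: you conclude local smoothness for pairs $u,v$ both in the sublevel set, while the paper's final display holds for $u$ in the sublevel set and $v$ arbitrary; both suffice for the glocal definition.
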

\begin{proof}
Recall that a function is $G$-Lipschitz if $\norm{\nabla f(x)} \leq G$ for all $x$. If $f$ also satisfies $(L_{0}, L_{1})$ non-uniform smoothness, then
\begin{align*}
\norm{\nabla f(x) - \nabla f(y)} & \leq (L_0 + L_1 \, G ) \, \norm{y - x}.
\end{align*}
Hence, $f$ is globally $(L_{0} + L_{1}G)$-smooth. If instead $f$ is $L$-smooth, then by definition it is globally $L$-smooth.

We now proceed to show that $f$ is locally $L_{0} + L_{1}(4L_{1}\delta + 2 \, \sqrt{L_{0}\delta})$ smooth. Using existing results~\cite[Theorem 1, Proposition 3.2]{chen2023generalized}, we know that if $f$ is $(L_{0}, L_{1})$ non-uniform smooth, then the following descent lemma holds for all $x, y$,
    \begin{align}
        \label{nu-inequality-1}
        f(y) \leq f(x) + \iprod{\nabla f(x)}{y - x} + \left(\frac{L_{0} + L_{1}\norm{\nabla f(x)}}{2}\right)\exp(L_{1}\norm{x - y})\norm{y - x}^2.
    \end{align}
    Following a proof similar to previous work~\cite[Lemma 2.2]{gorbunov2024methods}, we set $y = x - \frac{\nu\nabla f(x)}{L(x)}$ where $L(x) = L_{0} + L_{1}\norm{\nabla f(x)}$ for some $\nu$,
    \begin{align*}
        f_{*} \leq f(y) \leq f(x) - \frac{\nu\norm{\nabla f(x)}^2}{L(x)} + \frac{1}{2}\exp\left(\frac{L_{1}\nu\norm{\nabla f(x)}}{L(x)}\right)\frac{\norm{\nabla f(x)}^2\nu^2}{L(x)}.
    \end{align*}
    Using the fact that $\frac{L_{1}\norm{\nabla f(x)}}{L(x)} = \frac{L_{1}\norm{\nabla f(x)}}{L_{0} + L_{1}\norm{\nabla f(x)}} \leq 1$, then by \eqref{nu-inequality-1},
    \begin{align*}
        f_{*} &\leq f(x) - \frac{\nu\norm{\nabla f(x)}^2}{L(x)} + \frac{\norm{\nabla f(x)}^2\nu^2\exp(\nu)}{2L(x)} \\
        &= f(x) - \frac{\nu\norm{\nabla f(x)}^2}{L(x)}\left(1 - \frac{\nu \exp(\nu)}{2}\right)
    \end{align*}
    Now setting $\nu$ such that $\nu \exp(\nu) = 1 \implies \nu \in (1/2, 1)$, we see that
    \begin{align*}
        f_{*} &\leq f(x) - \frac{\nu\norm{\nabla f(x)}^2}{2L(x)}.
    \end{align*}    
    Since $\nu > \frac{1}{2}$, then for all $x$,
    \begin{align*}
        \norm{\nabla f(x)}^2 \leq \frac{2}{\nu} \, (L_{0} + L_{1}\norm{\nabla f(x)})(f(x) - f_{*}) \leq 4 \, (L_{0} + L_{1}\norm{\nabla f(x)}) \, (f(x) - f_{*}).
    \end{align*}    
    Now suppose $f(x) - f_{*} \leq \delta$, then the above inequality implies 
    \begin{align*}
        \norm{\nabla f(x)}^2 \leq 4(L_{0} + L_{1}\norm{\nabla f(x)})\delta,
    \end{align*}    
    which further implies
    \begin{align*}
        \norm{\nabla f(x)}^2 - 4L_{1}\delta\norm{\nabla f(x)}- 4L_{0}\delta \leq 0.
    \end{align*}    
    Now using the quadratic formula, we can bound the gradient norm such that
    \begin{align*}
        \norm{\nabla f(x)} &\leq \frac{4L_{1}\delta + \sqrt{16L_{1}^2\delta^2 + 16L_{0}\delta}}{2} .
    \end{align*} 
    Hence, for all $x$ s.t. $f(x) - f^* \leq \delta$, by $\sqrt{a+b} \leq \sqrt{a} + \sqrt{b}$,
    \begin{align*}
        \norm{\nabla f(x)} &\leq 4L_{1}\delta + 2 \, \sqrt{L_{0}\delta}.
    \end{align*}   
    Using the above inequality with the definition of $(L_0, L_1)$ non-uniform smoothness, we obtain that for all $y$ and $x$ s.t. $f(x) - f^* \leq \delta$, 
    \begin{align*}
    \norm{\nabla f(x) - \nabla f(y)} & \leq (L_{0} + L_{1}(4L_{1}\delta + 2\sqrt{L_{0}\delta})) \, \norm{y - x},     
    \end{align*}
    which implies that $f$ is locally $(L_{0} + L_{1}(4L_{1}\delta + 2\sqrt{L_{0}\delta}))$-smooth. 

\end{proof}

\section{Logistic Regression Glocally Smooth Results}\label{lr-glocal-proof}
\begin{proof}[Proof of Proposition \ref{lem:logistic-glocal}]    
The Hessian matrix of the logistic regression objective is given by,
\begin{align*}
\nabla^2 \ell(w) = X^T \, D \, X  \,,
\end{align*}
where $D$ is an $n \times n$ diagonal matrix with diagonal elements $D_{i,i} = p_i \, (1 - p_i)$ where $p_i = \frac{1}{1 + \exp(-y_i \, \langle x_i, w \rangle)}$. Since $p_i \in [0,1]$ have that $D_{i,i} \leq \frac{1}{4}$ and hence $\ell(w)$ is globally $\frac{\lambda_{\max}[X^TX]}{4}$-smooth. Let us upper-bound $D_{i,i}$ in a tighter way such that
\begin{align*}
D_{i,i} & = p_i \, (1- p_i) = \frac{1}{1 + \exp(-y_i \, \langle x_i, w \rangle)} \, \frac{\exp(-y_i \, \langle x_i, w \rangle)}{1 + \exp(-y_i \, \langle x_i, w \rangle)}.  \\
\end{align*}
Now since $\frac{1}{1 + e^x} \leq 1$ for all $x$,
\begin{align*}
D_{i,i} \leq \frac{\exp(-y_i \, \langle x_i, w \rangle)}{1 + \exp(-y_i \, \langle x_i, w \rangle)}.
\end{align*}
Using the fact that $\frac{x}{1+x} \leq \ln(1+x)$ for all $x$,
\begin{align*}
D_{i,i} \leq \ln\left(1 + \exp(-y_i \, \langle x_i, w \rangle)\right),
\end{align*}
and therefore
\begin{align*}
    D_{i,i} & \leq \sum_{i = 1}^{n} \ln\left(1 + \exp(-y_i \, \langle x_i, w \rangle) \right) = \ell(w) .
\end{align*}
Defining $U := \ell(w) \, I_n$, we know that $D_{i,i} \leq U_{i,i}$ for all $i$. Using the above relation,
\begin{align*}
\nabla^2 \ell(w) & = X^T \, D \, X  \, \preceq U \, X^T X \preceq \ell(w) \, \lambda_{\max}[X^T X] I_n \\
\implies \norm{\nabla^2 \ell(w)} &\leq \ell(w) \, \lambda_{\max}[X^T X]
\end{align*}
This implies that for \textit{all} $w$, if $\ell(w) - \ell_{*} \leq \delta$, then $L_{*} \leq [\ell_{*} + \delta] \, \lambda_{\max}[X^T X]$. 

\end{proof}
Note that by bounding $D_{i,i}$ by $\text{max}_{i}\ln\left(1 + \exp(-y_i \, \langle x_i, w \rangle) \right)$, we can get a tighter result than the one given above. However, we choose to use the looser bound in our proof as it leads to a more elegant result.

\subsection{\textcolor{red}{Complexity of Logistic Regression Under Glocal Smoothness}}

To show Proposition~\ref{prop:logistic-glocal-best}, we first derive the $\delta$ minimizing the iteration complexity bound in Theorem~\ref{glocal-smooth-theorem} using the glocal logistic regression bound.

\begin{lemma}
\label{lem:lambert}
Consider the function
\begin{equation}
\label{eq:lambertW}
h(\delta) =
    \frac{1}{4}\log\left(\frac{\Delta_{0}}{\delta}\right) + (\ell_* + \delta)\log\left(\frac{\delta}{\epsilon}\right),
\end{equation}
    with $\Delta_0 \geq \delta \geq \epsilon > 0$ and $\ell_* \geq 0$.
    Define $\xi = 1/4 - \ell_*$. 
    \begin{itemize}
        \item Case 1: if $\xi \leq \epsilon$ the minimizer of this function is $\delta_*=\epsilon$.
        \item Case 2: if $\xi \geq \Delta_0\log(\Delta_0e/\epsilon)$ the minimizer of this function is $\delta_* = \Delta_0$.
        \item Case 3: otherwise the minimizer of this function is 
        \begin{equation}
        \delta_* = \frac{\xi}{\omega},
    \end{equation}
    or equivalently
    \begin{equation}
        \delta_* = \frac{\epsilon}{e}\exp(\omega),
    \end{equation}
        where $\omega$ is the principal branch of the Lambert $W$ function evaluated at $\xi e/\epsilon$ (which is positive since in this case $\xi > \epsilon > 0$).
    \end{itemize}
\end{lemma}
\begin{proof}
We first note that if $\xi \leq 0$ then $\ell_* > 1/4$ and 
 the solution is given by $\delta_*=\epsilon$. Thus in deriving the other situations below we will assume that $\xi > 0$.
    Taking the derivative of $h$  with respect to $\delta$ gives
    \begin{align*}
h'(\delta) & = -\frac{1}{4\delta} + \frac{\ell_*}{\delta} + \log\delta + 1 -\log\epsilon\\
& = \log\left(\frac{\delta e}{\epsilon}\right) - \frac{\xi}{\delta},
    \end{align*}
    and we note that both terms are strictly increasing in $\delta$ (for $\xi > 0$). Thus,
to find a minimizer it is sufficient to check the directional derivatives at the boundaries and for stationary points in the interior.
    \begin{itemize}
    \item Case 1: at the left boundary of $\delta = \epsilon$ we have
    \[
    h'(\epsilon) = 1 - \frac \xi \epsilon.
    \]
We have that $\epsilon$ is a minimizer if $h'(\epsilon) \geq 0$, which happens if $\xi \leq \epsilon$.
\item Case 2: at the right boundary of $\delta = \Delta_0$ we have
\[
h'(\Delta_0) = \log\left(\frac{\Delta_0 e}{\epsilon}\right) - \frac{\xi}{\Delta_0},
\]
and we have $h'(\Delta_0) \leq 0$ if $\xi \geq \Delta_0\log(\Delta_0 e/\epsilon)$
\item Case 3: in this case we have a stationary point in the interior. 
        Equating $h'(\delta)$ with 0 is equivalent to 
\begin{align*}
    \frac{\xi}{\delta} = \log\left(\frac{\delta e}{\epsilon}\right).
\end{align*}
Applying the exponent function to both sides, we have
\begin{align*}
    \exp\left(\frac \xi \delta\right) = \frac{\delta e}{\epsilon}.
\end{align*}
Multiplying both sides by $\xi/\delta$ gives,
\begin{align*}
 \frac{\xi}{\delta}\exp\left(\frac{\xi}{\delta}\right) = \frac{\xi e}{\epsilon}.
\end{align*}
Now in order to find the minimizer of the above equation, we apply the Lambert $W$ function,
\begin{align*}
 W\left(\frac{\xi}{\delta}\exp\left(\frac{\xi}{\delta}\right)\right) = W\left(\frac{\xi e}{\epsilon}\right),
\end{align*}
and applying the defining property ($W(y)\exp(W(y)) = y$ for any $y$) of the Lambert $W$ function on the left gives
\[
\frac{\xi}{\delta} = W\left(\frac{\xi e}{\epsilon}\right),
\]
where we note that $W(\xi e/\epsilon)$ is real and unique since $\xi e/\epsilon > 0$. Solving for $\delta$ gives
\[
\delta = \frac{\xi}{W(\frac{\xi e}{\epsilon})},
\]
or by re-arranging the Lambert equation $W(y)\exp(W(y)) = y$ as $W(y) = y/\exp(W(y))$ we equivalently have
\[
\delta = \frac{\epsilon}{e}\exp\left(W\left(\frac{\xi e}{\epsilon}\right)\right).
\]
\end{itemize}
\end{proof}

\begin{proof}[Proof of Proposition \ref{prop:logistic-glocal-best}]
Since $\ell$ is glocally $(L, L_*, \delta)$-smooth (by Lemma \ref{lem:logistic-glocal}) and $\mu$-strongly convex, by Theorem \ref{glocal-smooth-theorem} the iteration complexity of GD(LO) as a function of $\delta$ is
\begin{align*}
    T(\delta) =
    \frac{L}{\mu}\log\left(\frac{\Delta_{0}}{\delta}\right) + \frac{L_*}{\mu}\log\left(\frac{\delta}{\epsilon}\right).
\end{align*}
\newline
Now if use the bounds $L = \frac{1}{4}\norm{X}^2$ and $L_{*} = (\ell_{*} + \delta)\, \norm{X}^2$, 
\begin{align}
    \label{logistic-complexity}
    T(\delta) & =
    \frac{\norm{X}^2}{\mu}\left(\frac{1}{4}\log\left(\frac{\Delta_{0}}{\delta}\right) + (\ell_{*} + \delta)\log\left(\frac{\delta}{\epsilon}\right)\right)\\
    & = \frac{\norm{X}^2}{\mu}h(\delta),
\end{align}
where $h(\delta)$ is given by~\eqref{eq:lambertW}. 
We obtain the different cases by plugging into $h(\delta)$ the minimizing values $\delta_*$ from Lemma~\ref{lem:lambert}.
Case 1 and Case 2 are obtained by using $\delta=\epsilon$ and $\delta = \Delta_0$ (respectively) and using that $\log(1)=0$. Case 3 is obtained using the two forms of $\delta_*$ from Case 3 of Lemma~\ref{lem:lambert},
\begin{align*}
h(\delta_*) & = \frac{1}{4}\log\left(\frac{\Delta_{0}}{\delta_*}\right) + (\ell_{*} + \delta_*)\log\left(\frac{\delta_*}{\epsilon}\right)\\
& = \frac{1}{4}\log\left(\frac{\Delta_{0}e}{\exp(\omega)\epsilon}\right) + \left(\ell_{*} + \frac \xi \omega\right)\log\left(\frac{\exp(\omega)}{e}\right) & (\delta_* = \xi/\omega =\exp(\omega)\epsilon/e)\\
& = \frac 1 4\log\left(\frac{\Delta_0}{\epsilon}\right) + \underbrace{\left(\frac 1 4 - \ell_*\right)}_\xi - \omega\underbrace{\left(\frac 1 4 - \ell_*\right)}_\xi + \xi - \frac \xi \omega\\
& = \frac 1 4\log\left(\frac{\Delta_0}{\epsilon}\right) - \xi\omega + 2\xi - \frac \xi \omega\\
& = \frac 1 4\log\left(\frac{\Delta_0}{\epsilon}\right) - \frac \xi \omega (\omega^2  -2\omega + 1)\\
& = \frac 1 4\log\left(\frac{\Delta_0}{\epsilon}\right) - \frac \xi \omega (\omega- 1)^2
\end{align*}
\end{proof}

\section{Progress Guarantee Lemma}\label{progress-guarantees}
\begin{proof}[Proof of Proposition \ref{sublevel-proposition}]
    Our goal is to show that steps along a descent direction do not leave
    the \( \delta+f_* \)-sublevel set as long as the step-size is not too large.
    Without loss of generality, assume that \( d \) is a unit vector (otherwise
    replace \( d \) with \( \bar{d} = d / \norm{d}_2 \) and the proof is the same).
    Since \( d \) is a descent direction, the directional derivative
    is strictly negative ($f$ is decreasing in this direction) at \( \alpha = 0 \) .
    Thus, if some non-zero step-size 
    \[ 
        \alpha_* \leq \frac{1}{L_{*}}|\iprod{\nabla f(w_{t})}{d}|,
    \]
    were to increase \( f \), the
    sign of the directional derivative would have to change along the interval
    between \( 0 \) and \( \alpha_* \).
    Suppose \( f(w_t + \alpha_* d) \geq f(w_t) \) so that this is the case.

    Because the directional derivative is continuous, it must have at
    least one zero between \( 0 \) and \( \alpha_* \) by the intermediate value theorem.
    Let \( \hat{\alpha} \) be the smallest \( \alpha \) in \( (0, \alpha_*) \)
    for which the directional derivative is zero.
    That is, it satisfies
    \[
        \abr{\nabla f(w_t + \hat{\alpha} d), d} = 0.
    \]
    Continuity of the gradient (and therefore the directional derivative) 
    guarantees that \( \hat{\alpha} \) exists and is strictly positive.

    Our choice of \( \hat{\alpha} \) also guarantees that the directional
    derivative is strictly negative for all smaller step-sizes, i.e.
    \( \abr{\nabla f(w_t + \alpha d), d} < 0 \) for all 
    \( \alpha \leq \hat{\alpha} \).
    Integrating implies,
    \[
        f(w_t + \hat{\alpha} d) - f(w_t) 
        = \int_{0}^{\hat{\alpha}} \abr{\nabla f(w_t + r d), d} dr
        < 0,
    \]
    from which we deduce that \( f(w_t + \hat{\alpha} d) < f(w_t) \).
    So, \( w_t + \hat{\alpha} d \) is in the \( \delta \) sub-level set.
    Now we need only apply \( L_* \)-smoothness along the chord
    between \( 0 \) and \( \hat{\alpha} \) as follows.
    Let \( g(r) = f(w_t + r d) \) and observe,
    \begin{align*}
        \alpha_{*} & \leq \frac{1}{L_{*}}|\iprod{\nabla f(w_{t})}{d}|                                     \\
                   & = \frac{1}{L_{*}}|g'(0)| \tag{By definition of the directional derivative.}                           \\
                   & = \frac{1}{L_{*}}|g'(0) - g'(\hat{\alpha})| 
                   \tag{Since $\hat{\alpha}$ is a stationary point of $g$.}\\
                   & \leq |0 - \hat{\alpha}|\\
                   &= \hat{\alpha},
    \end{align*}
    where we have used Lemma~\ref{chord-smoothness-lemma} to extend $L_{*}$-smoothness of $f$ to $g$.
    But since $\alpha_{*} > \hat{\alpha}$, we obtain a contradiction. 
    This completes the proof.
\end{proof}

\begin{lemma}
    \label{chord-smoothness-lemma}
    Assume $f$ is $L_{*}$-smooth over some set $C$, and that $\norm{d} = 1$. Let \( g(\eta) = f(w + \eta d) \). Then $g(\eta)$ is $L_{*}$-smooth over $\{\eta: w + \eta d \in C\}$.
\end{lemma}
\begin{proof}
    \begin{align*}
        |g'(\alpha) - g'(\beta)| = |\iprod{\nabla f(w + \alpha d)}{d} - \iprod{\nabla f(w + \beta d)}{d}|
    \end{align*}
    Using the Cauchy–Schwarz inequality and our assumption that $\norm{d} = 1$,
    \begin{align*}
        |g'(\alpha) - g'(\beta)| & \leq \norm{\nabla f(w + \alpha d) - \nabla f(w + \beta d)}.
    \end{align*}
    By the $L_{*}$-smoothness of $f$ and our assumption that $\norm{d} = 1$,
    \begin{align*}
        |g'(\alpha) - g'(\beta)| & \leq L_{*}\norm{(\beta - \alpha)d} \\
                                 & = L_{*}|\beta - \alpha|.
    \end{align*}
\end{proof}
\begin{proof}[Proof of Corollary \ref{sublevel-GD-corollary}]
    For gradient descent at a point $w_{t}$, the next step is taken in the direction $d = -\nabla f(w_{t})$. With step size $\frac{1}{L_{*}}$, 
    $\eta_{*} \norm{d}^2 = \frac{1}{L_*}\norm{\nabla f(w_t)/L_*}^2 = \frac{1}{L_{*}} |\iprod{\nabla f(w_{t})}{d}|$. Hence, the condition
    $\eta_{*} \norm{d}^2 \leq \frac{1}{L_{*}} |\iprod{\nabla f(w_{t})}{d}|$ in Proposition \ref{sublevel-proposition} is satisfied, so $f(w_{t+1}) - f_{*} \leq \delta$. Note that using a step size of $\frac{1}{L_{*}}$ guarantees progress of $f(w_{t+1}) \leq f(w_{t}) - \frac{1}{2L_{*}}\norm{\nabla f(w_{t})}^2$. Then since an exact line search will result in a lower value of $f$ than $f(w_{t+1})$ using $\frac{1}{L_{*}}$, this proves the result.
\end{proof}

\section{Armijo Line Search Results}\label{armijo-line-search-proof}
In this section, we give the proof of Theorem \ref{glocal-smooth-armijo-theorem}. \textcolor{red}{We start by presenting more details about the line search procedure proposed in Section \ref{Practical}}. If the initial step size $\tilde{\eta_{t}}$ on iteration $t$ does not satisfy \ref{armijo-backtracking}, it is decreased by a factor $\beta$ where $0 < \beta < 1$ until the condition is satisfied. In particular, the step size $\eta_{t}$ selected by the Armijo condition is set as $\eta_{t} = \beta^{i_{t}}\tilde{\eta_{t}}$, where $i_{t}$ is the number of times the step size is decreased on iteration $t$. In this section, we focus on a variant of the backtracking line search that also makes use of forwardtracking \citep{fridovich2019choosing}. This modifies the line search to increase the step size on each iteration by a factor of $1/\beta$ until the condition~\eqref{armijo-backtracking} fails before performing the backtracking, allowing for selection of larger step sizes.

We now prove the following lemma, which is necessary for our proof of Theorem \ref{glocal-smooth-armijo-theorem}. It modifies the standard backtracking argument~\citep{boyd2004} to account for the forward tracking and generalizes the result of~ \citet{fridovich2019choosing} to allow $\alpha < 1/2$.

\begin{lemma}
\label{lem:GD-Armijo}
Assume that $f$ is $\bar{L}$-smooth and $\mu$-strongly convex. For all $t$, for the iterates of gradient descent as defined in~\eqref{gradient-descent} with step-size $\eta_t$ selected according to~\eqref{armijo-backtracking} with forwardtracking, the guaranteed progress is
    \begin{align}
    \label{eq:GD-Armijo}
        f(w_{t})
        &\leq f(w_{t-1}) - \frac{\beta\alpha}{\bar{L}}\norm{\nabla f(w_{t-1})}^2,
    \end{align}
\end{lemma}
which under strong convexity implies 
    \begin{align*}
        f(w_{t})  - f_* 
        &\leq (1 - \frac{2\beta\alpha\mu}{\bar{L}})(f(w_{t-1})  - f_*)
    \end{align*} 
where $0 < \alpha < 1/2$ and $0 < \beta < 1$. This implies that $f(w_t)$ decreases monotonically for all $t$. It also implies that if we start from $\bar{w}_0$ we require $t = \frac{\bar{L}}{2\beta\alpha\mu}\log\left(\frac{f(\bar{w}_0) - f_*}{\bar{\epsilon}}\right)$ iterations to have $f(w_t) - f_* \leq \bar{\epsilon}$.
\begin{proof}
By the $\bar{L}$-smoothness of $f$ and the gradient descent update,
\begin{align*}
    f(w_t) &\leq f(w_{t-1}) + \iprod{\nabla f(w_{t-1})}{w_t - w_{t-1}} + \frac{\bar{L}}{2}\norm{w_t - w_{t-1}}^2 \\
    &\leq f(w_{t-1}) - \eta_{t}\norm{\nabla f(w_{t-1})}^2 + \frac{\bar{L}\eta_{t}^2}{2}\norm{\nabla f(w_{t-1})}^2. \\
    &\leq f(w_{t-1}) - \eta_{t}(1 - \frac{\bar{L}\eta_{t}}{2})\norm{\nabla f(w_{t-1})}^2.
\end{align*}
Combining the above with the Armijo condition \ref{armijo-backtracking} with forwardtracking, we get that the step size $\eta_{t}$ returned by the line search satisfies $\eta_{t} \geq \frac{\beta}{\bar{L}}$. Now since the step size must satisfy \ref{armijo-backtracking}, this implies that 
\begin{align*}
    f(w_{t}) \leq f(w_{t-1}) - \frac{\beta \alpha}{\bar{L}}  \norm{\nabla f(w_{t-1})}^2.
\end{align*}
Now using the PL inequality, which is implied by strong convexity,
\begin{align*}
    f(w_{t}) \leq f(w_{t-1}) - \frac{2\beta\alpha\mu}{\bar{L}} (f(w_{t-1}) - f_*).
\end{align*}
Subtracting $f_*$ from both sides gives the result.
\end{proof}


\ifextended
\fi
\begin{proof}[Proof of Theorem \ref{glocal-smooth-armijo-theorem}]
We follow the analysis structure outlined in Section~\ref{app:recipe}:
    \begin{enumerate}
    \item Our region of interest under glocal smoothness is the set $\{w \; \vert \;  f(w) - f_* \leq \delta \}$.
    \item Let $t_\delta$ be the first iteration such that $f(w_{t_\delta}) - f_* \leq \delta$. From the monotonic decrease of $f(w_t)$, it follows that $w_t$ stays in the region of interest for subsequent $t$.
    \item Initialized at $w_0$, using Lemma \ref{lem:GD-Armijo} with $\bar{L} = L$, GD with Armijo line search is guaranteed to satisfy $f(w_t) - f_{*} \leq \delta$ after $t_\delta = \left\lceil \frac{L}{2\beta\alpha\mu}\log\left(\frac{f(w_0) - f_*}{\delta}\right) \right\rceil$ iterations.
    \item We start by noting that Proposition \ref{sublevel-proposition} implies that for all $t \geq t_\delta$ and $\eta \leq 1 / L_*$, the gradient descent update 
    \begin{align*}
        w_{t} = w_{t-1} - \eta \nabla f(w_{t-1})
    \end{align*}
    satisfies $f(w_{t}) - f_* \leq \delta$. Thus, $L_*$ smoothness holds between $w_{t}$ and $w_{t-1}$, implying that for $\eta \leq 1 / L_*$,
    \begin{align*}
        f(w_{t}) \leq f(w_{t-1}) - \eta(1 - \frac{L_*\eta}{2})\norm{\nabla f(w_{t-1})}^2.
    \end{align*}
    As a result, the Armijo condition \cref{armijo-backtracking} is satisfied for all $\eta \leq 1 / L_*$. The Armijo line search with backtracking parameter $\beta$ is thus guaranteed to return $\eta_t \geq \frac{\beta}{L_*}$ for all $t \geq t_{\delta}$. Therefore \ref{eq:GD-Armijo} holds with $\bar{L} = L_{*}$ and we can use Lemma \ref{lem:GD-Armijo} with $\bar{L} = L_{*}$ and $\bar{\epsilon}=\epsilon$ to obtain that GD with Armijo line search starting from $\bar{w}_0 = w_{t_\delta}$ is guaranteed to have $f(w_t) - f_{*} \leq \epsilon$ after another $t = \left\lceil \frac{L_{*}}{2\beta\alpha\mu}\log\left(\frac{f(w_{t_\delta}) - f_*}{\epsilon}\right) \right\rceil$ iterations.
                  
    \item Using that $f(w_{t_\delta}) - f_* \leq \delta$, we can conclude that it is sufficient to have $t = \left\lceil \frac{L_{*}}{2\beta\alpha\mu}\log\left(\frac{\delta}{\epsilon}\right) \right\rceil$ additional iterations.
    \end{enumerate}
\end{proof}

\ifextended

\section{Polyak Step Size Convergence Results}\label{polyak-proofs}
\label{app:polyak}
In this section we give the proof of Theorem \ref{polyak-theorem}.
\ifextended
We also give an example where the Polyak step size leads to an increase in the function value,
and also analyze the Polyak step size under glocal smoothness of the iterates (Theorem~\ref{polyak-theorem-iterates}). 
\fi
Our analyses exploit the following result~\cite[Lemma~1, Lemma~2]{hazan2019revisiting} for gradient descent with the classic Polyak step size \cite{polyak1987introduction}.

\begin{lemma}
\label{lem:GD-Polyak}
Assume that $f$ is $\bar{L}$-smooth and $\mu$-strongly convex. For all $t$ the iterates of gradient descent as defined in~\eqref{gradient-descent} with step-size $\eta_t$ given by~\eqref{polyak-stepsize} satisfy
    \begin{align*}
        \norm{w_{t} - w_*}^2 
        &\leq (1 - \frac{\mu}{4\bar{L}})\norm{w_{t-1} - w_*}^2
    \end{align*} 
\end{lemma}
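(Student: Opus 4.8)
The plan is to track the squared distance $\norm{w_t - w_*}^2$ across a single gradient step with the Polyak step size, following the classical argument of \cite{hazan2019revisiting}. Writing the relevant iteration as $w_t = w_{t-1} - \eta_{t-1}\nabla f(w_{t-1})$ with $\eta_{t-1} = (f(w_{t-1}) - f_*)/\norm{\nabla f(w_{t-1})}^2$, I would first expand
\[
    \norm{w_t - w_*}^2 = \norm{w_{t-1} - w_*}^2 - 2\eta_{t-1}\iprod{\nabla f(w_{t-1})}{w_{t-1} - w_*} + \eta_{t-1}^2\norm{\nabla f(w_{t-1})}^2.
\]
The choice of step size makes the quadratic term collapse, $\eta_{t-1}^2\norm{\nabla f(w_{t-1})}^2 = \eta_{t-1}(f(w_{t-1}) - f_*)$, and convexity (implied by strong convexity) gives $\iprod{\nabla f(w_{t-1})}{w_{t-1} - w_*} \geq f(w_{t-1}) - f_*$. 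Substituting both, the cross term and the quadratic term partially cancel to leave
\[
    \norm{w_t - w_*}^2 \leq \norm{w_{t-1} - w_*}^2 - \eta_{t-1}(f(w_{t-1}) - f_*).
\]

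The decisive step is to lower bound the product $\eta_{t-1}(f(w_{t-1}) - f_*) = (f(w_{t-1}) - f_*)^2/\norm{\nabla f(w_{t-1})}^2$ rather than bounding $\eta_{t-1}$ in isolation. I would use the consequence of $\bar{L}$-smoothness $\norm{\nabla f(w_{t-1})}^2 \leq 2\bar{L}(f(w_{t-1}) - f_*)$ — obtained by minimizing the descent-lemma upper bound over the step — to get that this product is at least $(f(w_{t-1}) - f_*)/(2\bar{L})$, and then $\mu$-strong convexity about $w_*$ (where $\nabla f(w_*) = 0$), namely $f(w_{t-1}) - f_* \geq \frac{\mu}{2}\norm{w_{t-1} - w_*}^2$, to get $\eta_{t-1}(f(w_{t-1}) - f_*) \geq \frac{\mu}{4\bar{L}}\norm{w_{t-1} - w_*}^2$. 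Plugging this into the previous display yields the claimed contraction $\norm{w_t - w_*}^2 \leq (1 - \frac{\mu}{4\bar{L}})\norm{w_{t-1} - w_*}^2$.

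I do not anticipate a substantive obstacle; the only subtlety is to keep the product $\eta_{t-1}(f(w_{t-1}) - f_*)$ intact until both smoothness and strong convexity have been applied, which is precisely what produces the factor $\frac{1}{4}$. One should also note that away from the optimum $f(w_{t-1}) - f_* > 0$ and $\norm{\nabla f(w_{t-1})}^2 > 0$, so $\eta_{t-1}$ is well defined and positive, and the bound is trivial when $w_{t-1} = w_*$.
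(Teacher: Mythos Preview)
Your proposal is correct and is precisely the standard argument from \cite{hazan2019revisiting}; the paper does not give its own proof of this lemma but simply cites \cite[Lemma~1, Lemma~2]{hazan2019revisiting}, so your write-up matches the referenced source.
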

This implies that $\norm{w_t - w_*}^2
$ decreases monotonically for all $t$. It also implies that if we start from $\bar{w}_0$ we require $t = \frac{4\bar{L}}{\mu}\log\left(\frac{\norm{\bar{w}_0 - w_*}^2}{\bar{\epsilon}}\right)$ iterations to have $\norm{w_t-w_*}^2 \leq \bar{\epsilon}$.

\ifextended
\subsection{Polyak Convergence Proof with Glocal Assumption in Function Values}
\label{polyak-proofs-function-values}
\fi
\begin{proof}[Proof of Theorem \ref{polyak-theorem}]
We follow the analysis structure outlined in Section~\ref{app:recipe}:
    \begin{enumerate}
        \item Our region of interest under glocal smoothness is $\{w | f(w) - f_* \leq \delta \}$.
        
        \item The Polyak step does not guarantee decrease in the function value, but we remain in the region of interest once $\norm{w_t-w_*}$ becomes sufficiently small. Let $t_\delta$ be the first iteration where $\frac{L}{2}\norm{w_{t_\delta} - w_*}^2 \leq \delta$. By global smoothness and the fact that $\nabla f(w_*) = 0$,
        \[
            f(w) - f_* \leq \frac{L}{2}\norm{w - w_*}^2,
        \] 
        and thus, $f(w_{t_\delta}) - f_* \leq \delta$. Further by the monotonicity of $\norm{w_t - w_*}$, we have $f(w_t) - f_* \leq \delta$ for all $t \geq t_\delta$.
        \item Initialized at $w_0$, using~\cref{lem:GD-Polyak} with $\bar{L} = L$, GD(Polyak) is guaranteed to have $\frac{L}{2}\norm{w_{t} - w_*}^2 \leq \delta$ after $t_\delta = \lceil \frac{4L}{\mu}\log\left(\frac{L}{2}\frac{\norm{w_0-w_*}^2}{\delta}\right) \rceil$ iterations.
        \item By $L_*$ smoothness within the region of interest we have $f(w_t) - f_* \leq \epsilon$ if we have $\frac{L_*}{2}\norm{w_t - w_*}^2 \leq \epsilon$ for $t\geq t_\delta$. Initialized at $w_{t_\delta}$, using~\cref{lem:GD-Polyak} with $\bar{L} = L_{*}$, we have that $\frac{L_*}{2}\norm{w_t - w_*}^2 \leq \epsilon$ after an additional  $t = \lceil \frac{4L_*}{\mu}\log\left(\frac{L_*}{2}\frac{\norm{w_{t_\delta}-w_*}^2}{\epsilon}\right) \rceil$ iterations. 
        \item Since $\frac{L}{2}\norm{w_{t_\delta} - w_*}^2 \leq \delta$, it is sufficient to have $t = \lceil \frac{4L_*}{\mu}\log\left(\frac{L_*}{L}\frac{\delta}{\epsilon}\right) \rceil$ additional iterations.
    \end{enumerate}
\end{proof}

\ifextended
\subsection{Example of Polyak Step Size Increasing Function Value}
\label{polyak-increase-function}

In this section, we give an example showing that gradient descent with the Polyak step size can increase the function value while decreasing the $2$-norm distance to the minimizer. Consider the function
\begin{equation*}
    f(w_0, w_1) = \frac{1}{2}w_0^2 + \frac{1}{40}w_1^2.
\end{equation*}
Note that the minimizer is $f_{*} = 0$, and the minimum is $w_{*} = (0,0)$. The gradient of $f$ is given by
\begin{equation*}
    g(w_0, w_1) = w_0 + \frac{1}{20}w_1.
\end{equation*}
Consider the iterate $\hat{w} = (0.05, 1)$. Then $f(\hat{w}) \approx 0.02625$ and $\norm{\hat{w} - w_{*}}^2 \approx 1.00125$. The Polyak step size at this step is $\hat{\eta} \approx 5.25$, and the next iterate of gradient descent is $w' = (-0.2125, 0.7375)$. With this step, $f(w') \approx 0.03617$ and $\norm{w' - w_{*}}^2 \approx 0.7675$. Thus, gradient descent with the Polyak step size can increase the function value while decreasing the $2$-norm distance to the minimizer.

\subsection{Polyak Convergence Proof with Glocal Assumption in Iterates}
\label{polyak-proofs-iterates}
\begin{theorem}
    \label{polyak-theorem-iterates}
    Assume that $f$ is glocally $(L, L_*, \delta)$-smooth in iterates and $\mu$-strongly convex. For all $t \geq 0$, let $w_t$ be the iterates of gradient descent as defined in~\eqref{gradient-descent} with step-size $\eta_t$ given by \eqref{polyak-stepsize}. Then $\norm{w_T - w_*}^2 \leq \epsilon \;\text{for all}$
    \begin{equation*}        
         T \geq 
         4\left(\frac{L}{\mu}\log\left(\frac{\norm{w_0 - w_*}^2}{\delta}\right) + \frac{L_*}{\mu}\log\left(\frac{\delta}{\epsilon}\right)\right).      
    \end{equation*}
\end{theorem}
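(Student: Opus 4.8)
The plan is to re-run the two-phase recipe of Section~\ref{app:recipe} essentially verbatim, but tracking the potential $\Phi_t := \norm{w_t - w_*}^2$ instead of the optimality gap. This is the quantity the Polyak step contracts monotonically (Lemma~\ref{lem:GD-Polyak}), so the natural ``region of interest'' is the ball $\mathcal{B} = \{w : \norm{w - w_*}^2 \leq \delta\}$, on which the iterate version of glocal smoothness guarantees $L_*$-smoothness. Because $\mathcal{B}$ is stated directly in iterate distance, the conversion step that produced the extra $L/2$ and $L_*/L$ factors inside the logarithms of Theorem~\ref{polyak-theorem} simply disappears.

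Concretely, let $t_\delta$ be the first index with $\Phi_{t_\delta} \leq \delta$. Monotonicity of $\Phi_t$ from Lemma~\ref{lem:GD-Polyak} (applied with the global constant $\bar L = L$) shows $w_t \in \mathcal{B}$ for every $t \geq t_\delta$, which discharges the ``stay in the region'' obligation. For the global phase, Lemma~\ref{lem:GD-Polyak} with $\bar L = L$ started from $w_0$ gives $t_\delta = \lceil \tfrac{4L}{\mu}\log(\norm{w_0 - w_*}^2/\delta) \rceil$. For the local phase, I would restart the Lemma~\ref{lem:GD-Polyak} recursion from $w_{t_\delta}$ with $\bar L = L_*$, obtaining $\Phi_t \leq \epsilon$ after a further $\lceil \tfrac{4L_*}{\mu}\log(\norm{w_{t_\delta} - w_*}^2/\epsilon) \rceil$ steps, and then use $\norm{w_{t_\delta} - w_*}^2 \leq \delta$ to replace this count by $\lceil \tfrac{4L_*}{\mu}\log(\delta/\epsilon) \rceil$. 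Adding the two bounds and absorbing the ceilings into the leading constant $4$ gives exactly the stated inequality.

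The only step that is not a transcription of the proof of Theorem~\ref{polyak-theorem}---and the part I expect to be the main obstacle---is justifying that the per-iteration contraction of Lemma~\ref{lem:GD-Polyak} holds with $\bar L = L_*$ for iterates inside $\mathcal{B}$, even though $f$ is only \emph{locally} $L_*$-smooth. That contraction rests on the inequality $\norm{\nabla f(w_t)}^2 \leq 2 L_* (f(w_t) - f_*)$ for $w_t \in \mathcal{B}$, whose usual derivation invokes a descent-lemma step of size $1/L_*$ (together with a co-coercivity argument) that a priori might leave $\mathcal{B}$. The observation that rescues this is that $\nabla f(w_*) = 0$, so $L_*$-smoothness across the convex set $\mathcal{B}$ gives $\norm{\nabla f(w_t)} \leq L_* \norm{w_t - w_*}$; hence the gradient step $w_t - \tfrac1{L_*}\nabla f(w_t)$ and the auxiliary point $w_* + \tfrac1{L_*}\nabla f(w_t)$ both lie within distance $\norm{w_t - w_*}$ of $w_*$ and therefore stay in $\mathcal{B}$, so $L_*$-smoothness does apply along the relevant segments. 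This is the iterate-distance analogue of Proposition~\ref{sublevel-proposition} / Corollary~\ref{sublevel-GD-corollary}; once it is established the remainder is mechanical, modulo handling the trivial degenerate case $w_t = w_*$.
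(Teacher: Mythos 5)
Your overall structure is exactly the paper's proof: the two-phase recipe of Section~\ref{app:recipe} with Lemma~\ref{lem:GD-Polyak} applied with $\bar L = L$ to reach the ball in $t_\delta = \bigl\lceil \frac{4L}{\mu}\log\bigl(\frac{\norm{w_0-w_*}^2}{\delta}\bigr)\bigr\rceil$ iterations, monotonicity of $\norm{w_t-w_*}$ to stay there, and then $\bar L = L_*$ together with $\norm{w_{t_\delta}-w_*}^2\leq\delta$ for the local phase; the paper's proof is just this and does not even comment on the subtlety you raise. Where you go beyond the paper, your justification has one imprecise step: from $\norm{\nabla f(w_t)}\leq L_*\norm{w_t-w_*}$ alone, the triangle inequality only places $w_t-\frac{1}{L_*}\nabla f(w_t)$ within $2\norm{w_t-w_*}$ of $w_*$, and non-expansiveness of that gradient step toward $w_*$ is itself a co-coercivity fact, so asserting it at this stage is circular. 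The fix is to use only your auxiliary point first: $z=w_*+\frac{1}{L_*}\nabla f(w_t)$ does lie in the ball by your gradient bound, so the descent lemma along $[w_*,z]$ combined with convexity at $w_t$ gives $\iprod{\nabla f(w_t)}{w_t-w_*}\geq f(w_t)-f_*+\frac{1}{2L_*}\norm{\nabla f(w_t)}^2$; this inequality already yields the Polyak contraction with constant $L_*$ (indeed with factor $1-\frac{\mu}{2L_*}$), and as a by-product shows the step $w_t-\frac{1}{L_*}\nabla f(w_t)$ is non-expansive toward $w_*$, after which your original descent-lemma argument (and hence $\norm{\nabla f(w_t)}^2\leq 2L_*(f(w_t)-f_*)$ and Lemma~\ref{lem:GD-Polyak} with $\bar L=L_*$) goes through. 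Equivalently, this is precisely what Lemma~\ref{smooth-convex-bound-subset} supplies when applied with $y=w_*$, whose distance condition is automatic since $w_*$ is the center of the ball; that is the tool the paper explicitly invokes for its other iterate-based analyses (AdGD and SGD), so citing it here would be the shortest complete route.
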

\begin{proof}
We follow the analysis structure outlined in Section~\ref{app:recipe}:
    \begin{enumerate}
    \item Our region of interest under glocal smoothness of the iterates is $\{w | \norm{w - w_*}^2 \leq \delta \}$.
    \item Let $t_\delta$ be the first iteration such that $\norm{w_{t_\delta} - w_*}^2 \leq \delta$. It follows from the monotonicity of $\norm{w_t - w_*}$, that all iterates $w_t$ for $t\geq t_\delta$ stay in the region of interest.
    \item Initialized at $w_0$, using Lemma \ref{lem:GD-Polyak} with $\bar{L} = L$, GD(Polyak) is guaranteed to have $\norm{w_{t} - w_*}^2 \leq \delta$ after $t_\delta = \lceil \frac{4L}{\mu}\log\left(\frac{\norm{w_0 - w_*}^2}{\delta}\right) \rceil$ iterations.
    \item Initialized at $w_{t_\delta}$, using Lemma \ref{lem:GD-Polyak} with $\bar{L} = L_{*}$, we have that $\norm{w_{t} - w_*}^2 \leq \epsilon$ after an additional $t = \lceil \frac{4L}{\mu}\log\left(\frac{\norm{w_{t_\delta} - w_*}^2}{\epsilon}\right) \rceil$ iterations.
    \item Since $\norm{w_{t_\delta} - w_*}^2 \leq \delta$, it is sufficient to have $t = \lceil \frac{4L_{*}}{\mu}\log\left(\frac{\delta}{\epsilon}\right) \rceil$ additional iterations.
    \end{enumerate}
\end{proof}
\fi

\ifextended
\section{AdGD Convergence Results}\label{MM-proof}
In this section we give the proof of Theorem \ref{MM-theorem}, which makes use of an existing convergence result for AdGD~\cite[Theorem~2]{MalitskyM20}. 
\begin{lemma}
\label{lem:GD-MM}
Assume that $f$ is $\bar{L}$-smooth and $\mu$-strongly convex. For all $t > 2$ the iterates of gradient descent as defined in~\eqref{gradient-descent} with step-size $\eta_t$ given by~\eqref{MM-stepsize} satisfy
    \begin{align*}
        \Phi_{t+1}
        &\leq \left(1 - \frac{\mu}{4\bar{L}}\right)\Phi_{t}
    \end{align*}  
    where $\Phi_{t} = \norm{w_{t} - w_*}^2 + \frac{1}{2}(1 + \frac{2\mu}{\bar{L}})\norm{w_{t} - w_{t-1}}^2 + 2\eta_{t-1}(1 + \theta_{t-1})(f(w_{t-1}) - f_*))$.
\end{lemma}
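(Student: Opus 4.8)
The plan is to derive Lemma~\ref{lem:GD-MM} from the strongly-convex analysis of AdGD in \cite[Theorem~2]{MalitskyM20}. The first step is pure bookkeeping: match the paper's $(\eta_t, \theta_t)$ with the $(\lambda_k, \theta_k)$ of \cite{MalitskyM20}, and verify that the Lyapunov function used there coincides (up to the index shift $k \leftrightarrow t$) with our $\Phi_t = \norm{w_t - w_*}^2 + \frac{1}{2}(1 + \frac{2\mu}{\bar L})\norm{w_t - w_{t-1}}^2 + 2\eta_{t-1}(1+\theta_{t-1})(f(w_{t-1}) - f_*)$. The desired conclusion, $\Phi_{t+1} \leq (1 - \frac{\mu}{4\bar L})\Phi_t$ for all $t > 2$, is then exactly their one-step contraction, with the factor read off directly, so in the cleanest case the proof is a one-line citation. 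The one point that needs attention is that the step-size update~\eqref{MM-stepsize} caps the growth of $\eta_t$ by $\sqrt{1 + \theta_{t-1}/2}$ rather than $\sqrt{1 + \theta_{t-1}}$; I would confirm that the strongly-convex argument of \cite{MalitskyM20} tolerates this variant, which only makes the step sizes smaller and hence can only help the recursion.

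If the cited statement cannot be used verbatim, I would reproduce the argument, which assembles a handful of standard ingredients. First, a lower bound $\eta_t = \Omega(1/\bar L)$: by $\bar L$-smoothness, $\norm{\nabla f(w_t) - \nabla f(w_{t-1})} \leq \bar L \norm{w_t - w_{t-1}}$, so the second term in the $\min$ defining $\eta_t$ is always at least $\frac{1}{2\bar L}$, and a short induction on the growth-capped first term then keeps $\eta_t$ bounded below by an absolute multiple of $1/\bar L$. Second, the exact gradient-step identity $\norm{w_{t+1} - w_*}^2 = \norm{w_t - w_*}^2 - 2\eta_t\iprod{\nabla f(w_t)}{w_t - w_*} + \eta_t^2\norm{\nabla f(w_t)}^2$, combined with $\mu$-strong convexity in the form $\iprod{\nabla f(w_t)}{w_t - w_*} \geq (f(w_t) - f_*) + \frac{\mu}{2}\norm{w_t - w_*}^2$. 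Third, the defining AdGD inequality $2\eta_t \norm{\nabla f(w_t) - \nabla f(w_{t-1})} \leq \norm{w_t - w_{t-1}}$, which is what lets one trade the troublesome term $\eta_t^2\norm{\nabla f(w_t)}^2$ against $\norm{w_t - w_{t-1}}^2$ and the ``stored'' sub-optimality $\eta_{t-1}(1+\theta_{t-1})(f(w_{t-1}) - f_*)$; this exchange is precisely what forces the three terms appearing in $\Phi_t$ and the coefficient $\frac{1}{2}(1 + \frac{2\mu}{\bar L})$. Putting these together and tuning the coefficients so that the cross-terms telescope yields a one-step decrease by a factor $1 - \Theta(\mu \eta_t) = 1 - \Theta(\mu/\bar L)$, which the analysis pins down as $1 - \frac{\mu}{4\bar L}$.

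I expect the main obstacle to be the adaptivity of the step size: since $\eta_t$ is neither monotone nor bounded above by $1/\bar L$, the textbook fixed-step descent argument does not apply, and one must carry the full energy function $\Phi_t$ through the recursion while checking that the accumulated error term stays summable --- this is where the detailed algebra of \cite{MalitskyM20}, and the verification that it survives the $\sqrt{1 + \theta_{t-1}/2}$ variant, is needed. Fortunately, for the downstream use in Theorem~\ref{MM-theorem} all that is required is a linear rate with factor $1 - \Theta(\mu/\bar L)$ together with the domination $\norm{w_t - w_*}^2 \leq \Phi_t$: iterating from $t = 3$ gives $\Phi_t \leq (1 - \frac{\mu}{4\bar L})^{t-3}\Phi_3$, hence $\norm{w_t - w_*}^2 \leq (1 - \frac{\mu}{4\bar L})^{t-3}\Phi_3$, so establishing the one-step contraction up to absolute constants already suffices and the precise constant $1/4$ is inessential.
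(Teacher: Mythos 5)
Your proposal matches the paper's approach: the paper does not reprove this lemma but takes it directly as the one-step contraction of the energy function from \cite[Theorem~2]{MalitskyM20}, exactly as in your first paragraph. Your worry about the $\sqrt{1+\theta_{t-1}/2}$ cap is moot, since that is precisely the step-size rule used in the strongly-convex theorem of \cite{MalitskyM20}, so the citation applies verbatim and your fallback re-derivation is not needed.
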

This implies that $\Phi_{t}$ decreases monotonically for all $t > 2$. It also implies that we require $t = O\left(\frac{\bar{L}}{\mu}\log\left(\frac{1}{\bar{\epsilon}}\right)\right)$ iterations to have $\Phi_{t} \leq \bar{\epsilon}$. Note that the proof of the above result uses the bound in Lemma \ref{smooth-convex-bound}. Lemma \ref{smooth-convex-bound-subset} shows that this bound also holds in the local region with smoothness $L_{*}$, which is required for step 4.
\begin{proof}[Proof of Theorem \ref{MM-theorem}]
We follow the analysis structure outlined in Section~\ref{app:recipe}:
    \begin{enumerate}
    \item Our region of interest under glocal smoothness of the iterates is $\{w \vert \norm{w - w_*}^2 \leq \delta \}$.
    \item Let $t^{'}$ be the first iteration such that $\Phi_{t^{'}} \leq \delta$ and $t^{'} > 2$. Now let $t_\delta = t^{'} + 2$. Since $\norm{w_{t} - w_*}^2 \leq \Phi_{t}$ for all $t$, it follows from the monotonicity of $\Phi_{t}$ that all $t\geq t_\delta - 1$ stay in the region of interest.
    \item Initialized at $w_0$, using Lemma \ref{lem:GD-MM} with $\bar{L} = L$, AdGD is guaranteed to have $\norm{w_{t_\delta} - w_*}^2 \leq \Phi_{t_\delta} \leq \delta$ after $t_\delta = O\left(\frac{L}{\mu}\log\left(\frac{\Phi_{3}}{\delta}\right)\right)$ iterations.
    \item Initialized at $w_{t_\delta}$, using Lemma \ref{lem:GD-MM} with $\bar{L} = L_{*}$, we have that $\norm{w_{t} - w_*}^2 \leq \Phi_{t} \leq \epsilon$ after another $t = O\left(\frac{L_{*}}{\mu}\log\left(\frac{\Phi_{t_{\delta}}}{\epsilon}\right)\right)$ iterations.
    \item Since $\Phi_{t_{\delta}} \leq \delta$, it is sufficient to have $t = O\left(\frac{L_{*}}{\mu}\log\left(\frac{\delta}{\epsilon}\right)\right)$ additional iterations.
    \end{enumerate}
\end{proof}

\section{Convex Objectives}\label{gd-proofs}
In order to analyze GD(LO) in convex settings, we require an iteration complexity of GD(LO) for smooth and convex functions. We give such a result next, following an argument similar to previous work~\cite[Section 3]{beck2013convergence}.
\begin{lemma}
\label{lem:GD-convex}
Assume that $f$ is $\bar{L}$-smooth. For all $t$, for the iterates of gradient descent as defined in~\eqref{gradient-descent} with step-size $\eta_t$ given line optimization \eqref{exact-line-search}, the guaranteed progress is
    \begin{align*}
        f(w_{t}) &\leq f(w_{t-1}) - \frac{1}{2\bar{L}}\norm{\nabla f(w_{t-1})}^2,
    \end{align*}  
    Furthermore, if $f$ is convex and coercive, then this implies that
    \begin{align*}
        f(w_{t}) - f_{*} \leq \frac{2\bar{L}R^2(\rho)}{t + 4}
    \end{align*} 
   where \[R^2(\rho) = \max_{w \in \mathbb{R}^d, w_{*} \in W_{*}}  \{\norm{w - w_{*}}^2 : f(w) \leq \rho)\}.\]
\end{lemma}
\begin{proof}
    Using an exact line search with $\bar{L}$-smoothness guarantees descent such that,
    \begin{align}
        \label{exact-line-search-guarantee-0}
        f(w_{t}) 
        &\leq f(w_{t-1}) - \frac{1}{2\bar{L}}\norm{\nabla f(w_{t-1})}^2.
    \end{align}
    Now by convexity and the Cauchy-Schwartz inequality, for any $w_{*} \in W_{*}$:
    \begin{align*}
        f(w_{t-1}) - f_{*} &\leq \iprod{\nabla f(w_{t-1})}{w_{t-1} - w_{*}} \\
        &\leq \norm{\nabla f(w_{t-1})}\norm{w_{t-1} - w_{*}} \\
        & \leq \norm{\nabla f(w_{t-1})}\max_{w \in \mathbb{R}^d}\left\{\norm{w - w_*} : f(w) \leq f(w_{t-1})\right\}\\
        & \leq \norm{\nabla f(w_{t-1})}\max_{w \in \mathbb{R}^d}\left\{\norm{w - w_*} : f(w) \leq f(w_0)\right\},
    \end{align*} 
    where we have used that monotonicity implies that $f(w_{t-1})\leq f(w_0)$.
    Now using that $f(w_0) \leq \rho$, we obtain
    \begin{align*}
        f(w_{t-1}) - f_{*} &\leq \norm{\nabla f(w_{t-1})}\sqrt{R^2(\rho)}.
    \end{align*}
    We note that since $f$ is coercive, the level sets of $f$ are compact and $R^2(\rho) < \infty$.
    Combining the above with \eqref{exact-line-search-guarantee-0} we have
    \begin{align*}
        f(w_{t}) \leq f(w_{t-1}) - \frac{1}{2\bar{L}R^2(\rho)}(f(w_{t-1}) - f_{*})^2.
    \end{align*}
    Now, by $\bar{L}$-smoothness, we have that:
    \begin{align*}
        f(w_{0}) - f_{*} \leq \frac{\bar{L}}{2}\norm{w_{0} - w_{*}}^2 \leq \frac{\bar{L}}{2}R^2(\rho) = \left(\frac{1}{4}\right) \, 2\bar{L}R^2(\rho)
    \end{align*}
    Then by Lemma \ref{sequence-lemma} with $m = 4$ and $\gamma = \frac{1}{2\bar{L}R^2(\rho)}$:
    \begin{align*}
        f(w_{t}) - f_{*} \leq \frac{2\bar{L}R^2(\rho)}{t + 4}
    \end{align*}    
    as required.
\end{proof}
This result implies that $f(w_t) - f_*$ is less than a value $\bar{\epsilon} > 0$ beginning from $\bar{w}_0$ for all $t$ satisfying
\begin{equation}
\label{eq:GDic-convex}
t \geq O\left(\frac{\bar{L}}{\bar{\epsilon}}R^2(\rho)\right),
\end{equation}
for any $\rho \geq  f(\bar{w_0})$.
\subsection{Globally Convex and Locally Strongly Convex}
\label{global-convex}
\begin{proof}[Proof of Theorem \ref{global-convex-local-PL-theorem}]
    We use the two-phase analysis strategy of Section~\ref{app:recipe}. Comparing to the analysis of GD(LO) for glocally-smooth and globally strongly-convex functions, all the steps are identical except Step 3:
    \begin{enumerate}
    \setcounter{enumi}{2}
    \item Initialized at $w_0$, using Lemma \ref{lem:GD-convex} with $\bar{L} = L$, GD(LO) is guaranteed to have $f(w_t) - f_{*} \leq \delta$ after $t_\delta = O\left(\frac{L}{\delta} \, R^2(f(w_0))\right)$ iterations.
    \end{enumerate}
\end{proof}
\subsection{Globally Convex and Locally Convex} \label{global-convex-proof} 
\begin{theorem}
    \label{global-convex-theorem}
    Assume that $f$ is glocally $(L, L_*, \delta)$-smooth. Additionally, assume that $f$ is globally convex and coercive. For all $t \geq 0$, let $w_t$ be the iterates of gradient descent as defined in~\eqref{gradient-descent} with step-size $\eta_t$ given by \eqref{exact-line-search}. Then $f(w_T) - f_* \leq \epsilon \;\text{for all}$
    \begin{equation*}
     T \geq O\left(\frac{L}{\delta}R^2(f(w_{0})) + \frac{L_{*}}{\epsilon}R^2(
     \delta + f_{*})\right).
    \end{equation*}
\end{theorem}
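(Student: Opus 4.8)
The plan is to follow the two-phase recipe of Section~\ref{app:recipe} with region of interest the sublevel set $S_\delta = \{w : f(w) - f_* \leq \delta\}$. Coercivity guarantees that $R(\rho)$ is finite for every $\rho$, and Lemma~\ref{lem:GD} (with $\bar L = L$) guarantees that GD(LO) decreases $f(w_t)$ monotonically, so once an iterate enters $S_\delta$ all later iterates remain there; this identifies the crossover index $t_\delta$ as the first $t$ with $f(w_t) - f_* \leq \delta$. The only genuinely ``new'' work is running the textbook sublinear-rate argument for convex functions twice, once with the global constant $L$ and once with the local constant $L_*$, and then stitching the two bounds together.

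\textbf{Phase 1 (global convex rate to reach $S_\delta$).} By monotonicity every iterate satisfies $f(w_t) \leq f(w_0)$, hence $\norm{w_t - w_*} \leq R(f(w_0)) =: R_0$ for the nearest minimizer $w_*$. Combining the guaranteed per-step decrease $f(w_{t+1}) \leq f(w_t) - \frac{1}{2L}\norm{\nabla f(w_t)}^2$ from Lemma~\ref{lem:GD} with the convexity inequality $f(w_t) - f_* \leq \iprod{\nabla f(w_t)}{w_t - w_*} \leq \norm{\nabla f(w_t)}\, R_0$ yields, for $\Delta_t = f(w_t) - f_*$, the recursion $\Delta_{t+1} \leq \Delta_t - \Delta_t^2/(2L R_0^2)$. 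The standard manipulation (divide by $\Delta_t \Delta_{t+1}$ and use $\Delta_{t+1}\le\Delta_t$) gives $\Delta_t \leq 2 L R_0^2 / t$, so $f(w_t) - f_* \leq \delta$ as soon as $t \geq t_\delta := \lceil 2 L R^2(f(w_0))/\delta \rceil = O\!\left(L R^2(f(w_0))/\delta\right)$.

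\textbf{Phase 2 (local convex rate inside $S_\delta$).} For $t \geq t_\delta$, Corollary~\ref{sublevel-GD-corollary} ensures that GD(LO) stays in $S_\delta$ and enjoys the $L_*$-smooth progress bound $f(w_{t+1}) \leq f(w_t) - \frac{1}{2L_*}\norm{\nabla f(w_t)}^2$. Since $f(w_t) \leq \delta + f_*$ for all such $t$, we now have $\norm{w_t - w_*} \leq R(\delta + f_*)$, and repeating the Phase~1 argument with $L \to L_*$ and $R_0 \to R(\delta + f_*)$ gives $f(w_t) - f_* \leq 2 L_* R^2(\delta + f_*)/(t - t_\delta)$, hence $f(w_t) - f_* \leq \epsilon$ after a further $O\!\left(L_* R^2(\delta + f_*)/\epsilon\right)$ iterations. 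Crucially this step only used $f(w_{t_\delta}) - f_* \leq \delta$, which removes any dependence on the particular iterate $w_{t_\delta}$; adding the Phase~1 and Phase~2 counts yields the claimed bound $T \geq O\!\left(\frac{L}{\delta} R^2(f(w_0)) + \frac{L_*}{\epsilon} R^2(\delta + f_*)\right)$.

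\textbf{Main obstacle.} The substantive point is not the sublinear-rate algebra but justifying that GD(LO) behaves, in each phase, at least as well as gradient descent with the fixed step size matched to that phase's smoothness constant --- globally for $1/L$ and, importantly, inside $S_\delta$ for $1/L_*$ without a step escaping $S_\delta$. Both facts are exactly Lemma~\ref{lem:GD} and Corollary~\ref{sublevel-GD-corollary}; the remaining care is to confirm that coercivity makes $R(f(w_0))$ and $R(\delta + f_*)$ finite so the bounds are meaningful, and to use monotonicity to keep every Phase-1 iterate in $\{w : f(w) \leq f(w_0)\}$ and every Phase-2 iterate in $\{w : f(w) \leq \delta + f_*\}$, which is what licenses replacing $\norm{w_t - w_*}$ by the respective $R$ values.
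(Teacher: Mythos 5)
Your proposal is correct and follows essentially the same route as the paper: the two-phase recipe of Section~\ref{app:recipe}, with the global sublinear convex rate (the paper's Lemma~\ref{lem:GD-convex} with $\bar{L}=L$ and radius $R(f(w_0))$) to reach the $\delta$-sublevel set, then Corollary~\ref{sublevel-GD-corollary} combined with the same sublinear argument at $\bar{L}=L_*$, replacing $R(f(w_{t_\delta}))$ by $R(\delta+f_*)$ via $f(w_{t_\delta})-f_*\leq\delta$. The only difference is that you re-derive the sublinear recursion inline rather than citing Lemma~\ref{lem:GD-convex}, which changes nothing of substance.
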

\begin{proof}
    We only need to specify Steps 4 and 5 as the other steps are identical to the analysis of GD(LO) in the globally convex and locally strongly-convex case:
    \begin{enumerate}
    \setcounter{enumi}{3}
    \item Initialized at $w_{t_\delta}$, using Corollary~\ref{sublevel-GD-corollary} Lemma \ref{lem:GD-convex} with $\bar{L} = L_{*}$, we have that $f(w_t) - f_{*} \leq \epsilon$ after another $t = O\left(\frac{L_*}{\epsilon}R^2(f(w_{t_\delta}))\right)$ iterations.
    \item Using that $f(w_{t_\delta}) - f_* \leq \delta$, it is sufficient to have $t = O\left(\frac{L_*}{\epsilon}R^2(\delta + f_{*})\right)$ additional iterations.
    \end{enumerate}
\end{proof}
\fi

\section{Coordinate Descent Results}\label{coordinate-descent-proof}

In this section we give the proofs of Theorem \ref{coordinate-descent-theorem} and Theorem \ref{coordinate-descent-theorem-greedy}. We first review a standard result regarding the convergence rate of random and greedy coordinate descent~\cite{nesterov2012cd,nutini2015coordinate}. 


\begin{lemma}
\label{lem:coordinate}
Assume that $f$ is coordinate-wise $\bar{L}$-smooth, then we have a coordinate-wise variant of the descent lemma
    \begin{equation*}
        f(u + \alpha e_{j}) \leq f(u) +
        \alpha \nabla_j f(u) + \frac{\bar{L}}{2}|\alpha|^2.
    \end{equation*}
It follows that for all $t$, for the iterates of coordinate descent as defined in~\eqref{coordinate-descent} with step-size $\eta_t$ given by $1/\bar{L}$ or exact line optimization \eqref{exact-line-search-coordinate}, we have guaranteed progress of
    \begin{align}
        f(w_{t+1}) 
        &\leq f(w_{t}) - \frac{1}{2\bar{L}}(\nabla_{j_{t}} f(w_{t}))^2.
        \label{eq:CDbound}
    \end{align}
Furthermore, if we use uniform sampling and $f$ is $\mu$-strongly convex, then this implies that
    \begin{align*}
        E[f(w_{t})] - f_*
        &\leq \left(1 - \frac{\mu}{d\bar{L}}\right)(f(w_{t-1}) - f_*).
    \end{align*}  
Alternately, if we use greedy selection and $f$ is $\mu_1$-strongly convex in the 1-norm, then this implies that
\begin{align*}
f(w_{t}) - f_*
        &\leq \left(1 - \frac{\mu_1}{\bar{L}}\right)(f(w_{t-1}) - f_*).
        \end{align*}
\end{lemma}
Observe that~\eqref{eq:CDbound} implies that $f(w_{t}) - f_{*}$ decreases monotonically for all $t$. Further,  if we start from $\bar{w_0}$, we require $t = \frac{d\bar{L}}{\mu}\log\left(\frac{f(\bar{w_{0}}) - f_{*}}{\bar{\epsilon}}\right)$ iterations to have $E[f(w_{t})] - f_{*} \leq \bar{\epsilon}$ with random sampling, whereas we require $t = \frac{\bar{L}}{\mu_1}\log\left(\frac{f(\bar{w_{0}}) - f_{*}}{\bar{\epsilon}}\right)$ iterations to have $f(w_{t}) - f_{*} \leq \bar{\epsilon}$ with greedy selection.

We next argue that, within the $\delta$ sublevel set, that coordinate descent methods with exact LO achieve~\eqref{eq:CDbound}with $\bar{L}=L_*$.
\begin{corollary}
    \label{sublevel-CD-corollary}
    Assume that $w_{t}$ satisfies $f(w_{t}) - f_{*} \leq \delta$. Additionally, assume that $f$ is coordinate-wise $(L, L_{*}, \delta)$ glocally smooth. Then for coordinate descent with steps defined in \eqref{coordinate-descent} with step size of $\frac{1}{L_{*}}$, $f(w_{t+1}) - f_{*} \leq \delta$. 
    
    Furthermore, if~\eqref{exact-line-search-coordinate} is used to select the step size using some $j_t$, the guaranteed progress is at least $f(w_{t+1}) \leq f(w_{t}) - \frac{1}{2L_{*}}(\nabla_{j_{t}} f(w_{t}))^2$. 
\end{corollary}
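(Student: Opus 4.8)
The plan is to mirror the proof of Corollary~\ref{sublevel-GD-corollary}, replacing the negative gradient by the coordinate-descent direction $d = -\nabla_{j_t} f(w_t)\, e_{j_t}$ and replacing $L_*$-smoothness by coordinate-wise $L_*$-smoothness. If $\nabla_{j_t} f(w_t) = 0$ then $w_{t+1} = w_t$ and both claims hold trivially, so I would assume $\nabla_{j_t} f(w_t) \neq 0$; then $d$ is a descent direction since $\iprod{\nabla f(w_t)}{d} = -(\nabla_{j_t} f(w_t))^2 < 0$.

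The first step is to record a coordinate-wise analogue of Proposition~\ref{sublevel-proposition}: if $w_t$ lies in the sublevel set $\{w : f(w) - f_* \leq \delta\}$ and $\eta_* \norm{d}^2 \leq \frac{1}{L_*}\lvert\iprod{\nabla f(w_t)}{d}\rvert$ with $d$ a descent direction along a single coordinate, then $f(w_t + \eta_* d) - f_* \leq \delta$. The point is that the proof of Proposition~\ref{sublevel-proposition} is genuinely one-dimensional: it reasons only about the restriction $g(r) = f(w_t + r \bar d)$ of $f$ to the line through $w_t$ in the direction $\bar d = d/\norm{d}$, and the only place smoothness enters is at a stationary point $\hat\alpha$ of $g$ via Lemma~\ref{chord-smoothness-lemma}. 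When $\bar d$ is a coordinate direction, coordinate-wise $L_*$-smoothness on the sublevel set supplies exactly that Lipschitz information, so the same argument carries over after replacing Lemma~\ref{chord-smoothness-lemma} with a one-line coordinate-wise variant.

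Next I would apply this analogue with the coordinate-descent step $w_{t+1} = w_t - \frac{1}{L_*}\nabla_{j_t} f(w_t)\, e_{j_t} = w_t + \frac{1}{L_*} d$, so that $\eta_* = \frac{1}{L_*}$ and $\eta_* \norm{d}^2 = \frac{1}{L_*}(\nabla_{j_t} f(w_t))^2 = \frac{1}{L_*}\lvert\iprod{\nabla f(w_t)}{d}\rvert$; the hypothesis holds with equality, giving the first claim $f(w_{t+1}) - f_* \leq \delta$. For the progress bound under exact line optimization, the same computation applied to every step size $r \in [0, 1/L_*]$ shows that the entire segment from $w_t$ to $w_t + \frac{1}{L_*} d$ remains in the $\delta$-sublevel set, so coordinate-wise $L_*$-smoothness (Definition~\ref{coordinate-smoothness} with constant $L_*$) holds along this segment; the usual descent-lemma calculation then yields $f(w_t + \frac{1}{L_*} d) \leq f(w_t) - \frac{1}{2L_*}(\nabla_{j_t} f(w_t))^2$, and since line optimization along coordinate $j_t$ in~\eqref{exact-line-search-coordinate} returns a value of $f$ no larger than the value at $\eta = 1/L_*$, the same bound holds for the line-search step.

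The main obstacle is this first step: justifying that Proposition~\ref{sublevel-proposition}, which is stated and proved under full-gradient $L_*$-smoothness, still applies when only coordinate-wise $L_*$-smoothness is available. I expect it to be routine precisely because that proposition's proof never uses smoothness off the line $w_t + \R d$, and along a coordinate line coordinate-wise smoothness is exactly as strong as full smoothness; once this is in hand the remaining steps are essentially identical to the gradient-descent case treated in Corollary~\ref{sublevel-GD-corollary}.
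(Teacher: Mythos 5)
Your proposal is correct and follows essentially the same route as the paper: apply Proposition~\ref{sublevel-proposition} with $d = -\nabla_{j_t} f(w_t)\, e_{j_t}$ and $\eta_* = 1/L_*$ to show the step stays in the $\delta$-sublevel set, use the coordinate-wise descent lemma there to get the $\frac{1}{2L_*}(\nabla_{j_t} f(w_t))^2$ decrease, and note that the exact line search along coordinate $j_t$ can only do better. The paper's own proof simply asserts the step-size condition of Proposition~\ref{sublevel-proposition} is satisfied without dwelling on the coordinate-wise versus full-gradient smoothness distinction, so your extra care in arguing the one-dimensional transfer is a welcome but inessential refinement of the same argument.
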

\begin{proof}
    For coordinate descent at a point $w_{t}$, the next step is taken in the direction $d = -\nabla_{j_{t}} f(w_{t})e_{j_{t}}$. With step size $\frac{1}{L_{*}}$ and recalling that $e_{j_{t}}$ refers to a vector with entry $1$ at index $j_{t}$ and all other entries $0$, it is straightforward to verify that the condition $\eta_{*} \norm{d}^2 \leq \frac{1}{L_{*}} |\iprod{\nabla f(w_{t})}{d}|$ in Proposition \ref{sublevel-proposition} 
    is satisfied, so $f(w_{t+1}) - f_{*} \leq \delta$. Note that using a step size of $\frac{1}{L_{*}}$ guarantees progress of $f(w_{t+1}) \leq f(w_{t}) - \frac{1}{2L_{*}}(\nabla_{j_{t}} f(w_{t}))^2$ for some $j_{t}$. Then since an exact line search will make at least as much progress as using $\frac{1}{L_{*}}$, this proves the result.
\end{proof}
\begin{proof}[Proof of Theorem \ref{coordinate-descent-theorem}]
We follow the analysis structure outlined in Section~\ref{app:recipe}, modifying Steps 2-5 to account for the stochasticity in the update.
\begin{enumerate}
\item Our region of interest under coordinate-wise glocal smoothness is $\{w \vert f(w) - f_* \leq \delta\}$.
\item Let $\rho = 1 - \frac{\mu}{dL}$ where we note that $0 < \rho < 1$. Following a similar argument to prior work~\cite[Theorem~5]{konevcny2017semi}, using Markov's inequality we have for any $\lambda > 0$,
          \begin{align*}
              P(f(w_t) - f_* > \lambda (f(w_0) - f_*))
               & \leq \frac{E[f(w_t)] - f_*}{\lambda (f(w_0) - f_*)} \\
               & \leq \frac{\rho^{t}}{\lambda}. \; \; \text{(Using the convergence guarantee)}
          \end{align*}
Choosing $\lambda = \frac{\delta}{f(w_0) - f_*}$ we obtain 
\[
P(f(w_t) - f_* > \delta) \leq \frac{\rho^t[f(w_0) - f_*]}{\delta}.
\]
For a probability $\zeta$ such that $0 < \zeta < 1$, the right side above is bounded above by $\zeta$ for
          \begin{align*}
          \label{eq:coord-hp}
              t = t_\delta := \left\lceil\frac{dL}{\mu}\log\left(\frac{f(w_0) - f_*}{\delta\zeta}\right)\right\rceil.
          \end{align*} Thus, we have
          \begin{align*}
              P(f(w_{t_\delta}) - f_*  \leq \delta) \geq 1 - \zeta.
          \end{align*} 
Then with probability $1 - \zeta$, there is a first iteration $t_\delta$ such that $f(w_{t_\delta}) - f_* \leq \delta$. In the event where $f(w_{t_\delta}) -  f_* \leq \delta$, it follows from monotonicity of $f(w_t)$ that all subsequent $t$ stay in the region of interest.
\item Initialized at $w_0$, using Lemma \ref{lem:coordinate} with $\bar{L} = L$, random coordinate descent satisfies $f(w_t) - f_{*} \leq \delta$ after $t_\delta = \lceil \frac{dL}{\mu}\log\left(\frac{f(w_0) - f_*}{\delta\zeta}\right) \rceil$ iterations with probability $1 - \zeta$. 
\item Define $X_{\delta}$ as the event that $f(w_{t_{\delta}}) - f_{*} \leq \delta$. Initialized at $w_{t_\delta}$, using Corollary \ref{sublevel-CD-corollary} and Lemma \ref{lem:coordinate} with $\bar{L} = L_{*}$, we have that $E[f(w_t)\;|\;X_{\delta}] - f_{*} \leq \epsilon$ after another $t = \lceil \frac{dL_{*}}{\mu}\log\left(\frac{f(w_{t_\delta}) - f_*}{\epsilon}\right) \rceil$ iterations.
\item Conditioned on $X_\delta$, $f(w_{t_\delta}) - f_* \leq \delta$. Hence, it is sufficient to have $t = \lceil \frac{dL_{*}}{\mu}\log\left(\frac{\delta}{\epsilon}\right) \rceil$ additional iterations.
\end{enumerate}
\end{proof}
\begin{proof}[Proof of Theorem \ref{coordinate-descent-theorem-greedy}]
We follow the analysis structure outlined in Section~\ref{app:recipe}:
\begin{enumerate}
\item Our region of interest under coordinate-wise glocal smoothness is $\{ w \vert f(w) - f_* \leq \delta \}$.
\item Let $t_\delta$ be the first iteration such that $f(w_{t_\delta}) - f_* \leq \delta$. It follows from monotonicity of $f(w_t)$ that all subsequent $t$ stay in the region of interest.
\item Initialized at $w_0$, using Lemma \ref{lem:coordinate} with $\bar{L} = L$, coordinate descent with greedy coordinate selection is guaranteed to have $f(w_t) - f_{*} \leq \delta$ after $t_\delta = \lceil \frac{L}{\mu_1}\log\left(\frac{f(w_0) - f_*}{\delta}\right) \rceil$ iterations.
\item Initialized at $w_{t_\delta}$, using Corollary \ref{sublevel-CD-corollary} and Lemma \ref{lem:coordinate} with $\bar{L} = L_{*}$, we have that $f(w_t) - f_{*} \leq \epsilon$ after an additional $t = \lceil \frac{L_{*}}{\mu_1}\log\left(\frac{f(w_{t_\delta}) - f_*}{\epsilon}\right) \rceil$ iterations.
\item Using that $f(w_{t_\delta}) - f_* \leq \delta$, it is sufficient to have $t = \lceil \frac{L_{*}}{\mu_1}\log\left(\frac{\delta}{\epsilon}\right) \rceil$ additional iterations.
\end{enumerate}
\end{proof}

\section{Stochastic Gradient Descent Convergence Results}\label{stochastic-proof}
\newcommand{\calL}{\mathcal{L}}
\newcommand{\wstar}{w_{*}}

The following result is implied by existing work~\cite[Theorem~9]{mishkin2020interpolation} on using a stochastic line search under the interpolation assumption. Earlier work~\cite[Theorem~1]{sls} also gives a similar result.

\begin{lemma}
    \label{lem:SGD}
    Assume that $f$ is $\mu$-strongly convex and that each $f_i$ is convex and $\bar{L}_{\text{max}}$-smooth. Finally, assume that $f$ satisfies the interpolation assumption~\eqref{interpolation}. Then stochastic gradient descent with the stochastic Armijo line search with $c = \frac{1}{2}$, $\beta =\frac{1}{2}$, and $\eta_\text{max} \geq 1/\bar{L}$ satisfies
    \begin{align*}
        E[\norm{w_t - w_*}^2] \leq \left(1 - \frac{\mu}{2\bar{L}_{\text{max}}} \right)\norm{w_{t-1} - w_*}^2.
    \end{align*}
    In addition, $\norm{w_{t} - w_*}^2 \leq \norm{w_{t-1} - w_*}^2$ for all $t$.
\end{lemma}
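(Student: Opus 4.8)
The statement reproduces a known analysis of the stochastic Armijo line search under interpolation, so the most direct route is to invoke \cite[Theorem~9]{mishkin2020interpolation} (or \cite[Theorem~1]{sls}) after checking the hypotheses match; I would also give a self-contained argument. The first ingredient is deterministic, index-independent control of the accepted step size. Since each $f_i$ is $\bar{L}_{\text{max}}$-smooth, the descent lemma applied to $f_{i_t}$ along $-\nabla f_{i_t}(w_t)$ shows that every $\eta \leq 1/\bar{L}_{\text{max}}$ passes the backtracking test~\eqref{stochastic-line-search} (with $c = \tfrac12$ this is $\eta - \tfrac{\bar{L}_{\text{max}}}{2}\eta^2 \geq \tfrac{\eta}{2}$). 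Because the search halves $\eta$ starting from $\eta_\text{max} \geq 1/\bar{L}_{\text{max}}$, the first accepted value cannot fall below $1/(2\bar{L}_{\text{max}})$, so $1/(2\bar{L}_{\text{max}}) \leq \eta_t \leq \eta_\text{max}$ on every iteration, with the lower bound independent of the random index $i_t$.

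Next I would expand the one-step distance,
\[
\norm{w_{t+1}-w_*}^2 = \norm{w_t-w_*}^2 - 2\eta_t\iprod{\nabla f_{i_t}(w_t)}{w_t-w_*} + \eta_t^2\norm{\nabla f_{i_t}(w_t)}^2,
\]
and exploit interpolation: $\nabla f_{i_t}(w_*)=0$ together with convexity of $f_{i_t}$ forces $w_*$ to be a global minimizer of $f_{i_t}$, so $f_{i_t}(w_{t+1}) \geq f_{i_t}(w_*)$. Combined with the Armijo condition this yields $\eta_t\norm{\nabla f_{i_t}(w_t)}^2 \leq 2(f_{i_t}(w_t)-f_{i_t}(w_*))$, hence $\eta_t^2\norm{\nabla f_{i_t}(w_t)}^2 \leq 2\eta_t(f_{i_t}(w_t)-f_{i_t}(w_*))$ — note this needs no upper bound on $\eta_t$. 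Convexity of $f_{i_t}$ also gives $\iprod{\nabla f_{i_t}(w_t)}{w_t-w_*} \geq f_{i_t}(w_t)-f_{i_t}(w_*) \geq 0$. Substituting both facts, the gradient-squared term is absorbed and one obtains $\norm{w_{t+1}-w_*}^2 \leq \norm{w_t-w_*}^2$ deterministically, which is the monotonicity claim.

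For the contraction I would keep (rather than fully discard) the inner-product term, lower-bound $\eta_t \geq 1/(2\bar{L}_{\text{max}})$ on the surviving negative part (legitimate because the inner product is nonnegative), and then take the conditional expectation over $i_t$ given $w_t$: there $E[\nabla f_{i_t}(w_t)]$ is proportional to $\nabla f(w_t)$ and $E[f_{i_t}(w_t)-f_{i_t}(w_*)]$ proportional to $f(w_t)-f_*$ (using $\sum_i f_i(w_*)=f_*$). Then $\mu$-strong convexity of $f$ — equivalently $\iprod{\nabla f(w_t)}{w_t-w_*} \geq \mu\norm{w_t-w_*}^2$ together with $f(w_t)\geq f_*$ — closes the recursion to a per-step factor $1-\tfrac{\mu}{2\bar{L}_{\text{max}}}$, with the precise constant as in \cite{mishkin2020interpolation}.

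The step I expect to be the main obstacle is that $\eta_t$ is itself a random variable correlated with the sampled index $i_t$, so the expectation cannot be moved through products such as $\eta_t\iprod{\cdot}{\cdot}$ or $\eta_t^2\norm{\cdot}^2$. This is exactly what the deterministic sandwich on $\eta_t$ from the line search (for the negative term) and the Armijo-plus-interpolation bound (for the gradient-squared term, which sidesteps any upper bound on $\eta_t$) are designed to handle. The only remaining care is bookkeeping the finite-sum normalization $f = \sum_i f_i$ against the per-example update in~\eqref{stochastic-gradient-descent} when passing to the expectation, which is routine and reproduces the constants of the cited result.
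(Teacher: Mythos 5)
You should first note that the paper itself does not prove this lemma: it is imported from prior work (\cite[Theorem~9]{mishkin2020interpolation}, see also \cite[Theorem~1]{sls}), with the explicit remark that the cited proof relies on Lemma~\ref{smooth-convex-bound}. So your primary move (invoking the citation after checking hypotheses) coincides with what the paper does, and the correct parts of your sketch are the step-size sandwich $\frac{1}{2\bar{L}_{\text{max}}} \leq \eta_t \leq \eta_\text{max}$ and the deterministic monotonicity argument: Armijo plus interpolation give $\eta_t^2\norm{\nabla f_{i_t}(w_t)}^2 \leq 2\eta_t\left(f_{i_t}(w_t)-f_{i_t}(w_*)\right)$, convexity gives $\iprod{\nabla f_{i_t}(w_t)}{w_t-w_*} \geq f_{i_t}(w_t)-f_{i_t}(w_*) \geq 0$, and substituting both into the expansion of $\norm{w_{t+1}-w_*}^2$ yields $\norm{w_{t+1}-w_*}^2 \leq \norm{w_t-w_*}^2$.

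The contraction step of your self-contained sketch, however, has a genuine gap: the two bounds you rely on cancel \emph{exactly}, leaving $-2\eta_t\left(f_{i_t}(w_t)-f_{i_t}(w_*)\right) + 2\eta_t\left(f_{i_t}(w_t)-f_{i_t}(w_*)\right) = 0$, so there is no ``surviving negative part'' on which to apply $\eta_t \geq \frac{1}{2\bar{L}_{\text{max}}}$; you recover only monotonicity. Nor can you replace $\eta_t$ by its deterministic lower bound on the inner-product term while the squared-gradient term keeps its own (possibly much larger, since $\eta_\text{max}$ has no upper bound) random $\eta_t$ --- the mismatched coefficients destroy the cancellation. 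The missing ingredient is precisely the strengthened convexity-plus-smoothness inequality that the paper points to, Lemma~\ref{smooth-convex-bound}: with $\nabla f_{i_t}(w_*)=0$ it gives $\iprod{\nabla f_{i_t}(w_t)}{w_t-w_*} \geq f_{i_t}(w_t)-f_{i_t}(w_*) + \frac{1}{2\bar{L}_{\text{max}}}\norm{\nabla f_{i_t}(w_t)}^2$, and this extra slack, combined with the Armijo bound $f_{i_t}(w_t)-f_{i_t}(w_*) \geq \frac{\eta_t}{2}\norm{\nabla f_{i_t}(w_t)}^2$ and $\eta_t \geq \frac{1}{2\bar{L}_{\text{max}}}$, is what yields a deterministic inequality of the form $\norm{w_{t+1}-w_*}^2 \leq \norm{w_t-w_*}^2 - \frac{1}{2\bar{L}_{\text{max}}}\iprod{\nabla f_{i_t}(w_t)}{w_t-w_*}$, where the coefficient of the negative term is now index-independent. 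Only then can you take the conditional expectation and use $\mu$-strong convexity of $f$ via $\iprod{\nabla f(w_t)}{w_t-w_*} \geq \mu\norm{w_t-w_*}^2$ (no strong convexity of the individual $f_i$ is available or needed) to obtain the factor $1-\frac{\mu}{2\bar{L}_{\text{max}}}$; note also that with the paper's normalization $f=\sum_i f_i$ the expected sampled gradient is $\frac{1}{n}\nabla f(w_t)$, so the normalization bookkeeping you call routine does affect the constant and should be made explicit.
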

This result implies that if we start from $\bar{w}_0$ we require $t = \frac{2\bar{L}_{\text{max}}}{\mu}\log\left(\frac{\norm{\bar{w}_0 - w_*}^2}{\bar{\epsilon}}\right)$ iterations to have $E[\norm{w_t - w_*}^2] \leq \bar{\epsilon}$. Further, note that the final comment implies that $\norm{w_{t} - w_*}^2$ decreases monotonically, despite the algorithm being stochastic.
Also, note that the proof of the above uses Lemma \ref{smooth-convex-bound}, and Lemma \ref{smooth-convex-bound-subset} verifies that Lemma \ref{smooth-convex-bound} holds in the local region with smoothness $L_{*}$.
\begin{proof}[Proof of Theorem \ref{stochastic-theorem}]
We follow the analysis structure outlined in Section~\ref{app:recipe}, modifying Steps 2-5 to reflect the stochastic nature of the method.
    \begin{enumerate}
    \item Our region of interest under glocal smoothness of the iterates is $\{w \vert \norm{w - w_*}^2 \leq \delta \}$.
    \item Let $\rho = 1 - \frac{\mu}{2L_{\text{max}}}$ where we note that $0 < \rho < 1$. Following a similar argument to prior work~\cite[Theorem~5]{konevcny2017semi}, using Markov's inequality we have for any $\lambda > 0$,
      \begin{align*}
          P(\norm{w_{t} - w_{*}}^2 > \lambda\norm{w_0 - w_{*}}^2)
           & \leq \frac{E[\norm{w_{t} - w_{*}}^2]}{\lambda\norm{w_0 - w_{*}}^2} \leq \frac{\rho^{t}}{\lambda}.
      \end{align*}
    Choosing $\lambda = \frac{\delta}{\norm{w_0 - w_*}^2}$ we obtain 
    \[
    P(\norm{w_{t} - w_{*}}^2 > \delta) \leq \frac{\rho^t\norm{w_0 - w_*}^2}{\delta}.
    \]
    For a probability $\zeta$ such that $0 < \zeta < 1$, the right side above is bounded above by $\zeta$ for
              \begin{align*}
              \label{eq:SGDic-hp}
                  t_\delta = \left\lceil\frac{2L_{\text{max}}}{\mu}\log\left(\frac{\norm{w_0 - w_*}^2}{\delta\zeta}\right)\right\rceil.
              \end{align*} Thus, we have
              \begin{align*}
                  P(\norm{w_{t_\delta} - w_{*}}^2  \leq \delta) \geq 1 - \zeta.
              \end{align*} 
    Then with probability $1 - \zeta$, there is a first iteration $t_\delta$ such that $\norm{w_{t_\delta} - w_{*}}^2 \leq \delta$. It follows from Lemma \ref{lem:SGD} that all subsequent $t$ stay in the region of interest.
    \item Initialized at $w_0$, using Lemma \ref{lem:SGD} with $\bar{L} = L_{\text{max}}$, stochastic gradient descent satisfies $\norm{w_t - w_*}^2 \leq \delta$ after $t_\delta = \left\lceil\frac{2L_{\text{max}}}{\mu}\log\left(\frac{\norm{w_0 - w_*}^2}{\delta\zeta}\right)\right\rceil$ iterations with probability $1 - \zeta$. Here we have used that $\eta_\text{max} \geq \frac{1}{L_{\text{max}, *}} \geq \frac{1}{L_{\text{max}}}$.
    \item Define $X_{\delta}$ as the event that $\norm{w_{t_{\delta}} - w_{*}} \leq \delta$. Initialized at $w_{t_\delta}$, using Lemma \ref{lem:SGD} with $\bar{L} = L_{\text{max}, *}$, and noting that we assume $\eta_\text{max} \geq \frac{1}{L_{\text{max}, *}}$, we have that $E[\norm{w_{T} - w_{*}} \;|\;X_{\delta}] \leq \epsilon$ after another $t = \lceil \frac{2L_{\text{max}, *}}{\mu}\log\left(\frac{\norm{w_{t_\delta} - w_{*}}^2}{\epsilon}\right) \rceil$ iterations.
    \item Conditioned on $X_\delta$, $\norm{w_{t_\delta} - w_{*}}^2 \leq \delta$. Hence, it is sufficient to have $t = \lceil \frac{2L_{\text{max}, *}}{\mu}\log\left(\frac{\delta}{\epsilon}\right) \rceil$ additional iterations.
    \end{enumerate} 
\end{proof}

\section{Acceleration Convergence Results}\label{acceleration-proof}
\begin{lemma}\label{lemma:maximum-step-size}
    If \( f \) is \( \mu \)-strongly convex, then the maximum step-size
    satisfying the Armijo condition in \cref{eq:armijo-ls} is bounded as,
    \[
        \sup \cbr{\eta > 0 : f(\yk - \eta \grad(\yk)) \leq f(\yk) - \frac{\eta}{2}\norm{\grad(\yk)}^2}
        \leq \frac{1}{\mu}.
    \]
    Moreover, if \( \eta = 1/\mu \), then 
    \( \wkk = \yk - \etak \grad(\yk) \) satisfies 
    \( f(\wkk) = f(w_*) \).
\end{lemma}
\begin{proof}
    Since \( f \) is strongly convex,
    \begin{align*}
        f(\yk - \eta \grad(\yk))
        \geq f(\yk) - \eta \rbr{1 - \frac{\eta \cdot \mu}{2}} \norm{\grad(\yk)}^2.
        \intertext{If \( \eta > 1 / \mu \), then \( \rbr{1 - \frac{\eta \cdot \mu}{2}} < 1/2 \)
            and we deduce,}
        f(\yk - \eta \grad(\yk)) 
        > f(\yk) - \frac{\eta}{2} \norm{\grad(\yk)}^2,
    \end{align*}
    implying that the Armijo condition cannot hold.

    Now, suppose that \( \eta = 1/\mu \). 
    Since the Armijo condition is satisfied, we have
    \[
     f(\wkk) \leq f(\yk) - \frac{1}{2 \mu} \norm{\grad(\yk)}_2^2.
    \]
    However, strong convexity implies
    \[
    f(w_*) \geq f(\yk) - \frac{1}{2 \mu} \norm{\grad(\yk)}_2^2.
    \]
    We deduce that \( f(\wkk) = f(w_*) \) as claimed.
\end{proof}

\begin{lemma}\label{lemma:acceleration-reformulation}
    Let \( w_{-1} = w_0 \) and recall that \( q_t = \etak \cdot \mu \).
    Then the acceleration scheme given in \cref{accelerated-gradient} is equivalent to
    the following update,
    \begin{equation}\label{eq:momentum-acceleration}
        \begin{aligned}
            \yk
             & = \wk +
            \rbr{\frac{1 - \sqrt{q_{t-1}}}{1 + \sqrt{q_t}}}
            \rbr{\frac{\etak}{\eta_{t-1}}}^{1/2}
            \rbr{\wk - w_{t-1}}         \\
            \wkk
             & = \yk - \etak \grad(\yk) \\
        \end{aligned}
    \end{equation}
\end{lemma}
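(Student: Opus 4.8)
The plan is to eliminate the auxiliary sequence \( \zk \) from the three-sequence scheme \cref{accelerated-gradient} and show that what remains is precisely the momentum update \cref{eq:momentum-acceleration}. Concretely, I will (i) solve the first line of \cref{accelerated-gradient} for \( \zk \) as an affine combination of \( \yk \) and \( \wk \); (ii) substitute this together with the gradient-step identity \( \grad(\yk) = (\yk - \wkk)/\etak \) into the \( \zkk \) update to express \( \zkk \) in terms of \( \wkk \) and \( \wk \); and (iii) plug the resulting formula for \( \zkk \) back into the first line of \cref{accelerated-gradient} at index \( t+1 \), simplifying the coefficient using \( q_t = \etak\mu \).

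For step (i), rearranging \( \yk = \wk + \frac{\sqrt{q_t}}{1+\sqrt{q_t}}(\zk - \wk) \) gives \( \zk = \frac{1}{\sqrt{q_t}}\rbr{(1+\sqrt{q_t})\yk - \wk} \). For step (ii), I will use the two identities \( \grad(\yk) = (\yk - \wkk)/\etak \) and \( \frac{\sqrt{q_t}}{\mu} = \frac{\etak}{\sqrt{q_t}} \) (both immediate from \( q_t = \etak\mu \)) so that the gradient term in the \( \zkk \) update becomes \( \frac{1}{\sqrt{q_t}}(\yk - \wkk) \); after substituting the step-(i) expression for \( \zk \), the \( \yk \) contributions cancel and one is left with \( \zkk = \frac{1}{\sqrt{q_t}}\rbr{\wkk - (1-\sqrt{q_t})\wk} \). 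For step (iii), writing \( \ykk = \wkk + \frac{\sqrt{q_{t+1}}}{1+\sqrt{q_{t+1}}}(\zkk - \wkk) \) and using \( \zkk - \wkk = \frac{1-\sqrt{q_t}}{\sqrt{q_t}}(\wkk - \wk) \) yields a momentum coefficient of \( \frac{1-\sqrt{q_t}}{1+\sqrt{q_{t+1}}}(q_{t+1}/q_t)^{1/2} \); since \( q_{t+1}/q_t = \eta_{t+1}/\etak \), shifting the index down by one reproduces \cref{eq:momentum-acceleration} exactly. The base case needs the convention \( \zk[0] = w_0 \), under which \( y_0 = w_0 \), which is consistent with the stated \( w_{-1} = w_0 \) (making the momentum term vanish at \( t = 0 \)).

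There is no substantive obstacle here --- the statement is an algebraic identity --- so the only thing to get right is the bookkeeping of the \( \sqrt{q_t} \) prefactors through steps (i)--(iii) and the handling of the first iteration. The one point worth flagging explicitly in the final write-up is that the equivalence is between the scheme initialized at \( \zk[0] = w_0 \) and the momentum scheme with the convention \( w_{-1} = w_0 \); for any other initialization of \( \zk[0] \) the two schemes differ only in the first extrapolation step.
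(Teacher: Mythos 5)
Your proposal is correct and takes essentially the same route as the paper: both arguments reduce to the key identity \( \sqrt{q_t}\,\zkk = \wkk - (1-\sqrt{q_t})\wk \), obtained by eliminating \( \grad(\yk) \) via \( \wkk = \yk - \etak\grad(\yk) \), and then substitute this back into the definition of \( \yk \) using \( q_t/q_{t-1} = \etak/\eta_{t-1} \). Your explicit handling of the initialization \( z_0 = w_0 \) (so the momentum term vanishes at \( t=0 \)) is a small bookkeeping point the paper leaves implicit, but otherwise the two proofs coincide.
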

\begin{proof}
    We start by expressing the \( \zk \) sequence in terms of
    \( \wk \) as follows,
    \begin{align*}
        \sqrt{q_t} \zkk
         & = \sqrt{q_t} (1 - \sqrt{q_t}) \zk + q_t (\yk - \frac{1}{\mu} \grad(\yk))                                    \\
         & = \sqrt{q_t} (1 - \sqrt{q_t}) \zk + (q_t -1)\yk + (\yk - \etak \grad(\yk))                                  \\
         & = \sqrt{q_t} (1 - \sqrt{q_t}) \zk + (q_t -1)\yk + \wkk                                                      \\
         & = \sqrt{q_t} (1 - \sqrt{q_t}) \zk + (q_t -1)\rbr{\wk + \frac{\sqrt{q_t}}{1 + \sqrt{q_t}}(\zk - \wk)} + \wkk \\
         & = \rbr{(q_t - 1) - \frac{\sqrt{q_t}(q_t - 1)}{1 + \sqrt{q_t}}} \wk + \wkk                                   \\
         & = \rbr{(\sqrt{q_t} - 1)(\sqrt{q_t} + 1) - \sqrt{q_t}(\sqrt{q_t} - 1)} \wk + \wkk                            \\
         & = \wkk - (1 - \sqrt{q_t}) \wk.
    \end{align*}
    As a result, we obtain,
    \[
        \sqrt{q_{t-1}} \zk = \wk - (1 - \sqrt{q_{t-1}}) w_{t-1}.
    \]
    Plugging this expression into the definition of \( \yk \) allows us to
    eliminate \( \zk \),
    \begin{align*}
        \yk
         & = \wk + \frac{\sqrt{q_t}}{1 + \sqrt{q_t}} \rbr{\zk - \wk}                                                      \\
         & = \wk + \frac{\sqrt{q_t}}{1 + \sqrt{q_t}}\frac{1}{\sqrt{q_{t-1}}} \rbr{\sqrt{q_{t-1}} \zk - \sqrt{q_{t-1}}\wk} \\
         & = \wk + \frac{\sqrt{q_t}}{1 + \sqrt{q_t}}\frac{1-\sqrt{q_{t-1}}}{\sqrt{q_{t-1}}} \rbr{\wk - w_{t-1}}.
        \intertext{Using \( q_{t}/{q_{t-1}} = \etak / \eta_{t-1} \) and rearranging this expression yields,}
        \yk
         & = \wk + \frac{1-\sqrt{q_{t-1}}}{1 + \sqrt{q_t}}\sqrt{\frac{\etak}{\eta_{t-1}}} \rbr{\wk - w_{t-1}},
    \end{align*}
    as claimed.
\end{proof}

\begin{proposition}\label{prop:accelerated-convergence}
    Assume that \( f \) is \( \mu \)-strongly convex and \( \bar{L} \)-smooth.
    Then the iterates \( \wk \), \( \zk \), and \( \yk \) satisfy the following
    convergence rates,
    \begin{equation}
        \begin{aligned}\label{eq:accelerated-convergence}
            f(w_T) - f(\wstar)
             & \leq \sbr{\prod_{t=0}^{T-1} (1 - \sqrt{\etak \cdot \mu})} \rbr{f(w_0) - f(\wstar)
            + \frac{\mu}{2} \norm{w_0 - \wstar}_2^2},                                            \\
            f(z_T) - f(\wstar)
             & \leq \rbr{\frac{\bar L}{\mu}}
            \sbr{\prod_{t=0}^{T-1} (1 - \sqrt{\etak \cdot \mu})} \rbr{f(w_0) - f(\wstar)
            + \frac{\mu}{2} \norm{w_0 - \wstar}_2^2},                                            \\
            f(y_T) - f(\wstar)
             & \leq \rbr{\frac{\bar L}{\mu}}
            \sbr{\prod_{t=0}^{T-1} (1 - \sqrt{\etak \cdot \mu})} \rbr{f(w_0) - f(\wstar)
                + \frac{\mu}{2} \norm{w_0 - \wstar}_2^2}.
        \end{aligned}
    \end{equation}
\end{proposition}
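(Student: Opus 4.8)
The plan is to run a Lyapunov (potential function) argument of the type in \cite{daspremont2021acceleration}, with the Armijo condition~\eqref{eq:armijo-ls} playing the role that the descent lemma plays in the fixed step-size analysis. Define
\[
    \Phi_t = f(w_t) - \fstar + \frac{\mu}{2}\norm{z_t - \wstar}^2,
\]
write $\rho_t = \sqrt{q_t} = \sqrt{\eta_t \mu}$, and aim for the one-step contraction $\Phi_{t+1} \leq (1 - \rho_t)\Phi_t$. Granting this, since $z_0 = w_0$ we have $\Phi_0 = f(w_0) - \fstar + \frac{\mu}{2}\norm{w_0 - \wstar}^2$, and telescoping gives $\Phi_T \leq \big(\prod_{t=0}^{T-1}(1 - \sqrt{\eta_t\mu})\big)\Phi_0$; the three displayed bounds then drop out of $\Phi_T$ (last paragraph).

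Before the main step I would dispatch two routine facts about the line search. First, $\bar{L}$-smoothness guarantees that every $\eta \le 1/\bar{L}$ satisfies~\eqref{eq:armijo-ls}, so the backtracking loop always terminates with some $\eta_t > 0$. Second, \cref{lemma:maximum-step-size} shows that $\mu$-strong convexity forces $\eta_t \le 1/\mu$, hence $q_t \le 1$ and $\rho_t \in [0,1]$. (If $\rho_t = 1$, the claimed contraction reads $\Phi_{t+1} \le 0$, which holds because $\Phi_{t+1}\ge 0$, so assume $\rho_t < 1$.)

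The heart of the proof is the one-step bound, which I would assemble as follows. Apply the convex-combination identity $\norm{(1-\rho)a+\rho b}^2 = (1-\rho)\norm{a}^2 + \rho\norm{b}^2 - \rho(1-\rho)\norm{a-b}^2$ to the $z_{t+1}$ update of~\eqref{accelerated-gradient}, with $a = z_t - \wstar$ and $b = y_t - \frac{1}{\mu}\nabla f(y_t) - \wstar$, and expand every square \emph{without discarding} the $-\rho_t(1-\rho_t)\norm{a-b}^2$ term. The coefficient of $\norm{\nabla f(y_t)}^2$ in the resulting expression for $\frac{\mu}{2}\norm{z_{t+1}-\wstar}^2$ works out to exactly $\frac{\rho_t^2}{2\mu} = \frac{\eta_t}{2}$, which the Armijo inequality $f(w_{t+1}) \le f(y_t) - \frac{\eta_t}{2}\norm{\nabla f(y_t)}^2$ then cancels. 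What remains is handled by two further convexity facts: $\mu$-strong convexity at $y_t$ against $\wstar$, which bounds the stray quadratic $\frac{\mu}{2}\norm{y_t-\wstar}^2 - \iprod{\nabla f(y_t)}{y_t - \wstar}$ by $\fstar - f(y_t)$; and plain convexity $f(y_t) - f(w_t) \le \iprod{\nabla f(y_t)}{y_t - w_t}$ together with the identity $y_t - w_t = \rho_t(z_t - y_t)$ (a direct rearrangement of the definition of $y_t$), which lets the leftover $\rho_t(1-\rho_t)$-weighted cross terms in $z_t - y_t$ be absorbed. Collecting everything and dividing by $1-\rho_t > 0$ yields $\Phi_{t+1} \le (1-\rho_t)\Phi_t$.

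Finally, I would read off the three rates from $\Phi_T$. The bound on $f(w_T) - \fstar$ is immediate since $\Phi_T \ge f(w_T) - \fstar$. From $\Phi_T \ge \frac{\mu}{2}\norm{z_T - \wstar}^2$ and $\bar{L}$-smoothness (which, using $\nabla f(\wstar)=0$, gives $f(x) - \fstar \le \frac{\bar{L}}{2}\norm{x - \wstar}^2$) we get $f(z_T) - \fstar \le \frac{\bar{L}}{\mu}\Phi_T$. For $y_T$, convexity of $\norm{\cdot - \wstar}^2$ and the fact that $y_T$ is a convex combination of $w_T$ and $z_T$ give $\norm{y_T - \wstar}^2 \le \frac{2}{\mu}\Phi_T$ (each of $\norm{w_T-\wstar}^2$ and $\norm{z_T-\wstar}^2$ is at most $\frac{2}{\mu}\Phi_T$ by strong convexity), and the same smoothness step gives $f(y_T) - \fstar \le \frac{\bar{L}}{\mu}\Phi_T$. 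The main obstacle is the bookkeeping in the one-step bound --- in particular recognizing that the negative cross term from expanding $\norm{z_{t+1}-\wstar}^2$ must be retained, since it is precisely that term which makes the $\norm{\nabla f(y_t)}^2$ contribution cancel via $\rho_t^2 = \eta_t\mu$; this identity is the only essential use of the choice $q_t = \eta_t\mu$, and it is what lets the argument survive a variable step size, requiring only $\eta_t \le 1/\mu$ as supplied by \cref{lemma:maximum-step-size}.
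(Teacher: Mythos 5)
Your proposal is correct and follows essentially the same route as the paper: the same potential \( \Phi_t = f(w_t) - \fstar + \tfrac{\mu}{2}\norm{z_t - \wstar}^2 \), the same three ingredients (strong convexity at \( \yk \) against \( \wstar \), convexity of \( f \) between \( \yk \) and \( \wk \), and the Armijo condition, with \( q_t \le 1 \) from \cref{lemma:maximum-step-size}), the same telescoping, and the same smoothness/convex-combination read-off of the three rates --- the only difference being that you carry out by hand (via the cancellation \( \rho_t^2/2\mu = \eta_t/2 \)) the algebraic equivalence that the paper certifies symbolically with Mathematica. One trivial remark: your dismissal of the \( \rho_t = 1 \) case as following from \( \Phi_{t+1}\ge 0 \) is a non sequitur, but the issue evaporates if you multiply the convexity inequality by \( (1-\rho_t)\ge 0 \) instead of dividing by \( 1-\rho_t \), so the argument needs no case split at all.
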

\begin{proof}
    We follow the proof strategy developed by previous work \cite{daspremont2021acceleration}.
    Since \( f \) is \( \bar{L} \)-smooth, the backtracking line-search returns a step-size
    \( \etak \geq 1 / 2 \bar L \) satisfying the Armijo condition \cite{nocedal1999numerical}.
    Strong convexity of \( f \), convexity of \( f \), and the Armijo
    line-search condition \cref{eq:armijo-ls} now imply the following
    inequalities,
    \begin{align*}
        f(\yk) - \fstar + \abr{\grad(\yk), \wstar - \yk} + \frac{\mu}{2} \norm{\wstar - \yk}^2
         & \leq 0  \\
        f(\yk) - f(\wk) + \abr{\grad(\yk), \wk - \yk}
         & \leq 0  \\
        f(\wkk) - f(\yk) + \frac{\etak}{2}\norm{\grad(\yk)}^2
         & \leq 0.
    \end{align*}
    Recall \( q_t = \etak \cdot \mu \);
    we have by Lemma \ref{lemma:maximum-step-size} that \( \etak \leq \frac{1}{\mu} \).
    If \( \etak = 1 / \mu \), then this lemma also implies \( f(\wkk) = f(w_*) \).
    On the other hand, if \( \etak < 1 / \mu \), then
    \( q_t < 1 \) and the following weighted sum of inequalities is
    valid,
    \begin{equation}\label{eq:weighted-inequalities}
        \begin{aligned}
            \frac{\sqrt{q_t}}{1 - \sqrt{q_t}} & \sbr{f(\yk) - \fstar + \abr{\grad(\yk), \wstar - \yk} + \frac{\mu}{2} \norm{\wstar - \yk}^2} \\
            +                                 & \sbr{f(\yk) - f(\wk) + \abr{\grad(\yk), \wk - \yk}}                                          \\
            + \frac{1}{1 - \sqrt{q_t}}        & \sbr{f(\wkk) - f(\yk) + \frac{\etak}{2}\norm{\grad(\yk)}^2}                                  \\
                                              & \leq 0.
        \end{aligned}
    \end{equation}
    Using basic algebraic operations, it is possible to show that
    \cref{eq:weighted-inequalities} is equivalent to,
    \begin{equation}\label{eq:potential-decrease}
        \begin{aligned}
            f(\wkk) - f(\wstar) + \frac{\mu}{2} \norm{\zkk - \wstar}^2
             & \leq
            \rbr{1 - \sqrt{q_t}}
            \sbr{ f(\wk) - f(\wstar) + \frac{\mu}{2} \norm{\zk - \wstar}^2}.
        \end{aligned}
    \end{equation}
    For example, this can be established by subtracting
    \cref{eq:weighted-inequalities} from \cref{eq:potential-decrease},
    applying the update rules in \cref{accelerated-gradient}, and then
    reducing to monomials and cancelling terms.
    In keeping with past work \cite{daspremont2021acceleration}, we omit the
    calculations and use Mathematica \cite{mathematica} to show the
    equivalence symbolically. 
    Recursing on \cref{eq:potential-decrease} from \( t = T \) to \( 0 \) shows,
    \begin{equation}
        f(w_T) - f(\wstar) + \frac{\mu}{2} \norm{z_T - \wstar}^2
        \leq \sbr{\prod_{t=0}^T (1 - \sqrt{\etak \cdot \mu})} \rbr{f(w_0) - f(\wstar)
            + \frac{\mu}{2} \norm{w_0 - \wstar}_2^2},                                        \\
    \end{equation}
    from which we immediately deduce the claimed convergence rate on \( f(w_T) \).
    The convergence rate for \( z_T \) follows from,
    \begin{align*}
        f(z_T) - f(\wstar)
         & \leq \frac{\bar L}{2}\norm{z_T - \wstar}^2                                                                 \\
         & \leq \rbr{\frac{\bar L}{\mu}} \sbr{\prod_{t=0}^{T-1} (1 - \sqrt{\etak \cdot \mu})} \rbr{f(w_0) - f(\wstar)
            + \frac{\mu}{2} \norm{w_0 - \wstar}_2^2}.
    \end{align*}
    Finally, since \( y_T \) is a convex combination of \( w_T \) and \( z_T \),
    we deduce \( f(y_T) \leq \max\cbr{f(w_T), f(z_T)}  \).
    This completes the proof.
\end{proof}
\begin{proof}[Proof of Theorem~\ref{acceleration-theorem}]
    We follow the analysis structure outlined in Section~\ref{app:recipe}:
    \begin{enumerate}
        \item Our region of interest under glocal smoothness is $\{w | f(w) - f_* \leq \delta\}$.

        \item By Proposition \ref{prop:accelerated-convergence}, we know that
              \begin{align*}
                  \max\cbr{f(\wk), f(\yk)}
                   & \leq
                  \rbr{\frac{L}{\mu}} \sbr{\prod_{i=0}^{t-1} (1 - \sqrt{\eta_i \cdot \mu})} \rbr{f(w_0) - f(\wstar)
                  + \frac{\mu}{2} \norm{w_0 - \wstar}_2^2} \\
                   & =: h_t.
              \end{align*}
              Let \( t_{\delta} \) be the first \( t \) for which
              \( h_t \leq \delta \).
              Since \( h_t \) is monotone decreasing in \( t \),
              \( h_t \leq h_{t_{\delta}} \leq \delta \) for all \( t \geq t_{\delta} \).
              Thus, \( f(\wk) - \fstar, f(\yk) - \fstar \leq \delta \) for all \( t \geq t_\delta \).

        \item
              Since \( f \) is globally \( L \)-smooth, the step-size selected
              by backtracking line-search on \cref{eq:armijo-ls} satisfies \( \etak \geq
              1/2L \) for all \( t \in \mathbb{N} \).
              As a result, the sequence \( h_t \) is controlled as,
              \[
                  h_t
                  \leq \rbr{\frac{L}{\mu}} \sbr{
                      1 - \sqrt{\frac{\mu}{2L}}}^{t} \rbr{f(w_0) - f(\wstar)
                      + \frac{\mu}{2} \norm{w_0 - \wstar}_2^2}.
              \]
              We conclude that an upper-bound on the iteration complexity for \( \wk, \yk \)
              to enter and remain in the \( \delta + \fstar \) sub-level set is given by,
              \[
                  t_{\delta} \leq \bigg\lceil\sqrt{\frac{2L}{\mu}}\log\rbr{\frac{L \rbr{f(w_0) - \fstar + \frac{\mu}{2}\norm{w_0 - \wstar}^2}}{\mu \cdot \delta}}\bigg\rceil.
              \]

        \item
              We have established that \( f(\yk) - \fstar, f(\wk) - \fstar \leq \delta \)
              for all \( t \geq t_{\delta} \).
              It only remains to show an improved bound on the step-sizes from line-search
              for all \( t \geq t_{\delta} \) due to \( L_* \)-smoothness
              over the \( \delta + \fstar \) sub-level set.
              In particular, we shall show \( \etak \geq \frac{1}{2L_*} \) for all
              \( t \geq t_{\delta} \). Proposition \ref{sublevel-proposition} implies that for all \( t \geq t_\delta \) and \( \eta \leq 1 / L_* \),
              \[
                  \tilde{w}_{t+1}(\eta) = \yk - \eta \grad(\yk),
              \]
              satisfies \( f(\tilde{w}_{t+1}(\eta)) - \fstar \leq \delta \).
              Thus, \( L_* \) smoothness holds between \( \tilde{w}_{t+1}(\eta) \)
              and \( \yk \), implying,
              \[
                  f(\tilde{w}_{t+1}(\eta))
                  \leq f(\yk) - \eta\rbr{1 - \frac{\eta L_*}{2}} \norm{\grad(\yk)}^2.
              \]
              As a result, the Armijo condition \cref{eq:armijo-ls} is satisfied
              for all \( \eta \leq 1 / L_* \).
              The backtracking line-search with backtracking parameter \( 1/2 \)
              is thus guaranteed to return \( \etak \geq \frac{1}{2L_*} \) for all
              \( t \geq t_{\delta} \).
              We conclude the following tighter upper-bound on \( f(\wk) - \fstar \) for
              \( t \geq t_{\delta} \),
              \begin{align*}
                  f(\wk) - \fstar
                   & \leq \sbr{ 1 - \sqrt{\frac{\mu}{2L_*}}}^{t - t_\delta}
                  \sbr{1 - \sqrt{\frac{\mu}{2L}}}^{t} \rbr{f(w_0) - f(\wstar) +
                  \frac{\mu}{2} \norm{w_0 - \wstar}_2^2}                                  \\
                   & \leq \frac{\mu}{L} \sbr{ 1 - \sqrt{\frac{\mu}{2L_*}}}^{t - t_\delta}
                  \delta.
              \end{align*}

        \item Combining the two complexities implies we require at most
              \[
                  T \leq
                  \sqrt{\frac{2L}{\mu}}\log\rbr{\frac{L \rbr{f(w_0) - \fstar + \frac{\mu}{2}\norm{w_0 - \wstar}^2}}{\mu \cdot \delta}}
                  +
                  \sqrt{\frac{2L_*}{\mu}}\log\rbr{\frac{\mu \cdot \delta}{L \cdot \epsilon}},
              \]
              iterations to compute an \( \epsilon \) sub-optimal point.
    \end{enumerate}
\end{proof}

\ifextended
\section{Non-Linear Conjugate Gradient Convergence Results}\label{app:NLCG}
Classic non-linear conjugate gradient (NLCG) methods use iterations of the form
\begin{equation}
w_{t+1} = w_t + \eta_t(-\nabla f(w_t) + \beta_t(w_t - w_{t-1})),
\label{eq:NLCG}
\end{equation}
and typically differ in their choice of $\beta_t$. A common variant is the Polak-Ribi\'{e}re-Polyak (PRP) choice~\cite{polak1969note,polyak1969conjugate} of
\begin{equation}
\beta_t = \frac{\norm{\nabla f(w_t)}^2 - \nabla f(w_t)^T\nabla f(w_{t-1})}{\norm{\nabla f(w_{t-1})}^2},
\label{eq:PRP}
\end{equation}
where we may periodically reset the algorithm by setting $\beta_t=0$. It is well known that NLCG methods with LO will terminate in at most $d$ steps
when applied to a $d$-dimensional strongly-convex quadratic function~\cite{nocedal1999numerical}. Further, non-linear conjugate gradient methods typically outperform gradient descent methods in practice by a significant margin even for non-quadratic functions. However, for non-quadratic functions the global iteration complexity of NLCG is worse than for gradient descent~\cite{das2024nonlinear}. In this section we use a glocal assumption to present an improved iteration complexity for NLCG that justifies its use in cases where the Hessian matrix is well-behaved in the local region.

In particular, we consider the extreme case where the Hessian is constant in the local region. This implies that the function is locally quadratic near the solution. Examples of functions that can be locally quadratic around the solution include the Huber loss~\cite{hastie2017elements} as well as smooth variants on support vector machines~\cite{lee2001ssvm,rosset2007piecewise}. In this setting, our convergence result exploits a recent global analysis of the PRP choice~\cite{das2024nonlinear} and the classic argument that NLCG methods converge at an accelerated rate and in at most $d$ steps if restarted on a locally-quadratic function~\cite{nocedal1999numerical}. Our analysis relies on the following recent result regarding the performance of the PRP variant of NLCG~\cite{das2024nonlinear}.

\begin{lemma}
\label{lem:NLCG}
Assume that $f$ is $\bar{L}$-smooth and $\mu$-strongly convex. For all $t$ the iterates of NLCG as defined in~\eqref{eq:NLCG} with step-size $\eta_t$ given by line optimization and $\beta_t$ given by the PRP momentum rate~\eqref{eq:PRP} the guaranteed progress is
    \begin{align*}
        f(w_{t}) - f_*
        &\leq \left(\frac{1 - (\mu/L)^2}{1+(\mu/L)^2}\right)^2(f(w_{t-1}) - f_*).
    \end{align*}
\end{lemma}
Note that this result assumes that the PRP update is performed at all iterations, but it implies that the same rate holds for variants with resets (since a reset at iteration $t$ would correspond to initializing the PRP method at $w_t$).
Besides monotonic decrease of $f(w_t)-f_*$, the above result implies that if we start from $\bar{w}_0$ we require $t = (1/2)\frac{\bar{L}^2}{\mu^2}\log\left(\frac{f(\bar{w}_0)-f_*}{\bar{\epsilon}}\right)$ iterations to have $f(w_t) - f_* \leq \bar{\epsilon}$.

To bound the number of iterations in the local region after the first reset, we also require two classic results regarding the convergence of the linear conjugate gradient method applied to quadratic functions~\cite[Section~3.2.2]{polyak1987introduction}
\begin{lemma}
\label{lem:CG}
Assume that $f$ is $\bar{L}$-smooth and $\mu$-strongly convex quadratic function,
\begin{equation}
f(w) = \frac 1 2 w^TAw + b^Tw + \gamma,
\label{eq:quadratic}
\end{equation}
for a positive-definite matrix $A$, vector $b$, and scalar $\gamma$. Consider the linear conjugate gradient method
\[
w_{k+1} = w_k - \eta_k\nabla f(w_k) + \beta_k(w_k - w_{k-1}),
\]
where $\eta_k$ and $\beta_k$ are jointly set to minimize $f$.
The algorithm terminates with the solution in at most $d$ iterations and beginning from $\bar{w}_0$ also satisfies
    \begin{align*}
    \norm{w_t - w_*} \leq 2\left(\frac{\bar{L}}{\mu}\right)^{1/2}\left(\frac{1-\sqrt{\mu/\bar{L}}}{1+\sqrt{\mu/\bar{L}}}\right)^t\norm{\bar{w}_0-w_*}.
    \end{align*}
\end{lemma}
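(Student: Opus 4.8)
The plan is to recognize the recurrence in the statement as the classical linear conjugate gradient (CG) iteration for the quadratic~\eqref{eq:quadratic}, and then run the standard Krylov-subspace / Chebyshev analysis. Write $r_0 = \nabla f(w_0) = A w_0 + b$ and, for $k \geq 1$, let $\mathcal{K}_k = \mathrm{span}\{r_0, A r_0, \dots, A^{k-1} r_0\}$ be the $k$-th Krylov subspace generated by $A$ and $r_0$. At the first iteration the $(w_k - w_{k-1})$ term is absent (take $w_{-1} = w_0$), so $w_1$ is an exact line search along $-r_0$; hence $w_1 - w_0 \in \mathcal{K}_1$ and $f(w_1) = \min_{w \in w_0 + \mathcal{K}_1} f(w)$.

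\textbf{Step 1 (identification with CG; finite termination).} I would prove by induction on $k$ that $w_k - w_0 \in \mathcal{K}_k$ and $f(w_k) = \min_{w \in w_0 + \mathcal{K}_k} f(w)$. For the inductive step, observe that $\nabla f(w_k) = A(w_k - w_*) = r_0 + A(w_k - w_0) \in \mathcal{K}_{k+1}$ using the hypothesis, while $w_k - w_{k-1} \in \mathcal{K}_k \subseteq \mathcal{K}_{k+1}$. Thus the two-dimensional affine slice $w_k + \mathrm{span}\{-\nabla f(w_k),\, w_k - w_{k-1}\}$ over which $w_{k+1}$ is defined is contained in $w_0 + \mathcal{K}_{k+1}$, giving $w_{k+1} - w_0 \in \mathcal{K}_{k+1}$ and $f(w_{k+1}) \geq \min_{w_0 + \mathcal{K}_{k+1}} f$. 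Conversely, the classical CG iterate attains $\min_{w_0 + \mathcal{K}_{k+1}} f$ and its update direction is a combination of $-\nabla f(w_k)$ and the previous direction, which is proportional to $w_k - w_{k-1}$; hence the CG iterate also lies in the same slice, so $f(w_{k+1}) \leq \min_{w_0 + \mathcal{K}_{k+1}} f$. Since $f$ is strictly convex ($A$ positive definite), its minimizer over any affine set is unique, so $w_{k+1}$ equals the CG iterate. Finite termination then follows: either $r_k = 0$ for some $k < d$ (so $w_k = w_*$), or the nested chain $\mathcal{K}_1 \subseteq \mathcal{K}_2 \subseteq \cdots$ strictly increases, because $\dim \mathcal{K}_{k+1} = \dim \mathcal{K}_k$ would force $\mathcal{K}_k$ to be $A$-invariant and hence (as $A$ is invertible) $w_0 - w_* = A^{-1} r_0 \in \mathcal{K}_k$, making $w_k = w_*$; strict growth up to $k = d$ yields $\mathcal{K}_d = \R^d$ and therefore $w_d = w_*$.

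\textbf{Step 2 (error as a polynomial optimization, then Chebyshev).} Write $\norm{v}_A^2 = v^\top A v$; then $f(w) - f(w_*) = \tfrac12 \norm{w - w_*}_A^2$, and any $w \in w_0 + \mathcal{K}_t$ has $w - w_* = q(A)(w_0 - w_*)$ for a polynomial $q$ of degree at most $t$ with $q(0) = 1$. Combining this with the optimality of $w_t$ over $w_0 + \mathcal{K}_t$ and diagonalizing $A$ in its eigenbasis, whose spectrum lies in $[\mu, \bar{L}]$, gives
\begin{align*}
\norm{w_t - w_*}_A
&= \min_{\deg q \le t,\ q(0) = 1} \norm{q(A)(w_0 - w_*)}_A \\
&\le \Big(\min_{\deg q \le t,\ q(0) = 1}\ \max_{\lambda \in [\mu, \bar{L}]} |q(\lambda)|\Big)\, \norm{w_0 - w_*}_A .
\end{align*}
Plugging in the shifted-and-scaled Chebyshev polynomial on $[\mu, \bar{L}]$, normalized so $q(0) = 1$, yields the classical bound $\min_q \max_{\lambda} |q(\lambda)| \le 2\big(\tfrac{\sqrt{\kappa} - 1}{\sqrt{\kappa} + 1}\big)^t$ with $\kappa = \bar{L}/\mu$, so $\norm{w_t - w_*}_A \le 2\big(\tfrac{\sqrt{\kappa} - 1}{\sqrt{\kappa} + 1}\big)^t \norm{w_0 - w_*}_A$.

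\textbf{Step 3 (norm conversion) and the main obstacle.} Finally, $\sqrt{\mu}\,\norm{v} \le \norm{v}_A \le \sqrt{\bar{L}}\,\norm{v}$ for all $v$, so $\norm{w_t - w_*} \le \mu^{-1/2}\norm{w_t - w_*}_A \le 2(\bar{L}/\mu)^{1/2}\big(\tfrac{\sqrt{\kappa} - 1}{\sqrt{\kappa} + 1}\big)^t \norm{w_0 - w_*}$, and rewriting $\tfrac{\sqrt{\kappa} - 1}{\sqrt{\kappa} + 1} = \tfrac{1 - \sqrt{\mu/\bar{L}}}{1 + \sqrt{\mu/\bar{L}}}$ gives exactly the stated bound. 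The main obstacle is Step 1: carefully justifying that the prescription ``choose $\eta_k$ and $\beta_k$ to jointly minimize $f$'' over $\mathrm{span}\{-\nabla f(w_k),\, w_k - w_{k-1}\}$ reproduces standard linear CG (and hence inherits the Krylov-subspace optimality that drives Steps 2--3), including the boundary case $k = 0$ where $w_{-1}$ is undefined and the recurrence reduces to an exact line search, and the degenerate case in which the Krylov sequence stabilizes before step $d$. Steps 2--3 are then the textbook CG analysis applied verbatim.
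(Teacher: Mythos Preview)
Your proposal is correct and follows the classical Krylov-subspace/Chebyshev argument for linear CG. Note that the paper does not actually prove this lemma: it is stated as one of ``two classic results regarding the convergence of the linear conjugate gradient method applied to quadratic functions'' and attributed to \cite[Section~3.2.2]{polyak1987introduction}. Your sketch is essentially the textbook proof that such a citation points to, so there is nothing substantive to compare---you have supplied the argument the paper omits.
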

Note that by using
\[
\frac{\mu}{2}\norm{w - w_*}^2 \leq f(w) - f_* \leq \frac{L}{2}\norm{w-w_*}^2,
\]
we can use the rate above to bound the progress in sub-optimality,
\[
f(w_t) - f_* \leq 4\left(\frac{\bar{L}}{\mu}\right)^2\left(\frac{1-\sqrt{\mu/\bar{L}}}{1+\sqrt{\mu/\bar{L}}}\right)^{2t}(f(\bar{w}_0) - f_*).
\]
The relevance of this result to our setting is that functions with a constant Hessian can be written in the form~\eqref{eq:quadratic}, and that the PRP NLCG method with exact line optimization to set $\eta_k$ is equivalent to the linear conjugate gradient update when applied to quadratic functions. This implies that for quadratic functions we require $t = (1/2)\sqrt{L/\mu}\log(4(L/\mu)^2(f(\bar{w}_0) - f_*)/\bar{\epsilon})$ iterations to have $f(w_t) - f_* \leq \bar{\epsilon}$ if PRP NLCG is initialized at $\bar{w}_0$. Given the above results, we now present our main convergence theorem for NLCG.

\begin{theorem}
    \label{thm:NLCG}
    Assume that $f$ is glocally $(L, L_*, \delta)$-smooth, $\mu$-strongly convex, and the Hessian $\nabla^2 f(w)$ is constant over the region where $f(w)-f_* \leq \delta$. For all $t \geq 0$, let $w_t$ be the iterates of NLCG as defined in~\eqref{eq:NLCG} with step-size $\eta_t$ given by line optimization. Set $\beta_t$ to $0$ on the first and every $d$th iteration, and $\beta_t$ set using the PRP formula~\eqref{eq:PRP} on all other iterations. Then $f(w_T) - f_* \leq \epsilon \;\text{for all}$
    \begin{align*}
        T & \geq
        O\left(\frac{L^2}{\mu^2}\log\left(\frac{\Delta_{0}}{\delta}\right)\right)\\
        & + 
        \min\left\{O\left(\frac{L^2}{\mu^2}\log\left(\frac{\delta}{\epsilon}\right)\right), 
         d  + \min\left\{d,O\left(\sqrt{\frac{L_*}{\mu}}\log\left(\left(\frac{L_*}{\mu}\right)^2\frac{\delta}{\epsilon}\right)\right)\right\}\right\}.
    \end{align*}
\end{theorem}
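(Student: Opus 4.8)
The plan is to follow the two-phase recipe of \cref{app:recipe}, taking the ``region of interest'' to be the $\delta$-sublevel set $R = \{w : f(w) - f_* \le \delta\}$, which is convex since $f$ is $\mu$-strongly convex. The first ingredient I would establish is that NLCG with line optimization is a descent method with non-increasing function values: because line optimization forces $\langle \nabla f(w_{t+1}), w_{t+1} - w_t\rangle = 0$, the next PRP direction $d_{t+1} = -\nabla f(w_{t+1}) + \beta_{t+1}(w_{t+1} - w_t)$ satisfies $\langle \nabla f(w_{t+1}), d_{t+1}\rangle = -\norm{\nabla f(w_{t+1})}^2 < 0$, and the restart directions $-\nabla f(w_t)$ are descent directions as well. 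I would then combine this with \cref{sublevel-proposition}: if $w_t \in R$, then the step matching the best step-size under $L_*$-smoothness keeps the function value at most $f_* + \delta$, and line optimization does at least as well, so $w_{t+1} \in R$. Hence once an iterate enters $R$, all later iterates remain in $R$, and $f(w_t)$ is monotonically non-increasing throughout.

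\textbf{Phase 1 and the global analysis.} For the ``global'' portion I would invoke the recent global iteration complexity of PRP-NLCG with exact line search on an $L$-smooth $\mu$-strongly convex function~\cite{das2024nonlinear}, which is $O((L^2/\mu^2)\log(\Delta_0/\epsilon'))$ iterations to reach suboptimality $\epsilon'$. Splitting $\log(\Delta_0/\epsilon) = \log(\Delta_0/\delta) + \log(\delta/\epsilon)$ immediately gives one valid complexity: we reach $f(w_t) - f_* \le \delta$ (hence enter $R$) in $t_\delta = O((L^2/\mu^2)\log(\Delta_0/\delta))$ iterations --- the first term of the theorem --- and then reach $\epsilon$-accuracy in another $O((L^2/\mu^2)\log(\delta/\epsilon))$ iterations --- the first term of the inner $\min$.

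\textbf{Phase 2 and the locally-quadratic analysis.} For the alternative (``local'') bound I would exploit the constant Hessian. After $t_\delta$ the iterates stay in $R$, on which the Hessian equals a fixed $H$ with $\mu I \preceq H \preceq L_* I$; since $f$ is strongly convex with constant Hessian on the connected set $R$, $R$ is exactly the ellipsoid $\{w : (w - w_*)^\top H (w - w_*) \le 2\delta\}$, $f$ agrees on $R$ with the quadratic $q(w) = f_* + \frac{1}{2} (w - w_*)^\top H (w - w_*)$, and line optimization of $f$ within $R$ coincides with that of $q$. Let $t'$ be the first restart iteration at or after $t_\delta$, so $t' \le t_\delta + d$ and the at most $d$ ``wasted'' iterates between $t_\delta$ and $t'$ all lie in $R$. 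On the cycle $[t', t' + d]$ there is no restart, so PRP-NLCG with exact line search on $q$ is precisely linear conjugate gradient started at $w_{t'}$; by the classical guarantees~\cite{nocedal1999numerical} it terminates in at most $d$ iterations and also obeys the accelerated Chebyshev bound with rate governed by $\kappa_* = L_*/\mu$. Converting $f(w_{t'}) - f_* \le \delta$ into an energy-/Euclidean-norm bound, running the Chebyshev estimate, and converting back to the function-value gap shows $\epsilon$-accuracy is reached within $O(\sqrt{L_*/\mu}\,\log((L_*/\mu)^2 \delta/\epsilon))$ cycle iterations, the polynomial condition-number factor inside the logarithm being an artifact of those norm conversions. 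Hence Phase 2 costs at most $d + \min\{d,\, O(\sqrt{L_*/\mu}\log((L_*/\mu)^2 \delta/\epsilon))\}$, which is the second term of the inner $\min$.

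\textbf{Combining and the main obstacle.} Both the global and the locally-quadratic analyses upper bound the number of post-$t_\delta$ iterations sufficient for $\epsilon$-accuracy, so their minimum is also sufficient; adding $t_\delta$ gives the stated bound. I expect the main obstacle to be the interface between the two phases: one must show the iterates genuinely remain in $R$ during the (at most $d$) iterations spent waiting for the next restart --- which is exactly where monotonicity and \cref{sublevel-proposition} are needed --- and that the post-restart cycle really is a bona fide linear-CG run on $q$, so that the finite-termination-in-$d$-steps and accelerated-rate statements of~\cite{nocedal1999numerical} apply verbatim. A secondary nuisance is tracking the constants in the Chebyshev bound precisely enough to land the stated $(L_*/\mu)^2$ factor inside the logarithm.
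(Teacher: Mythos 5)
Your proposal is correct and follows essentially the same two-phase argument as the paper's proof: the same global phase of $O\left(\frac{L^2}{\mu^2}\log\left(\frac{\Delta_0}{\delta}\right)\right)$ iterations from the PRP rate of \cite{das2024nonlinear}, and the same three-way case analysis in the local region (the global rate before any reset, at most $d$ iterations waiting for the next reset, and then either finite termination in $d$ steps or the Chebyshev-accelerated linear-CG rate, with the $(L_*/\mu)^2$ factor inside the logarithm coming from converting between iterate-norm and function-value bounds). The only cosmetic difference is that the paper obtains region-invariance directly from the monotone decrease guaranteed by Lemma~\ref{lem:NLCG}, whereas you re-derive it from the descent-direction property of line optimization together with Proposition~\ref{sublevel-proposition}.
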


\begin{proof}[Proof of Theorem \ref{thm:NLCG}]
We follow the analysis structure outlined in Section~\ref{app:recipe}:
\begin{enumerate}
    \item Our region of interest under glocal smoothness and is $\{w \; | \; f(w) - f_* \leq \delta\}$.
    \item Let $t_\delta$ be any iteration where $f(w_{t_\delta}) - f_* \leq \delta$. From the monotonicity in Lemma~\ref{lem:NLCG}, it follows that $w_t$ stays in the region of interest for subsequent $t$.
    \item From Lemma~\ref{lem:NLCG}, PRP NLCG initialized at $w_0$ is guaranteed to satisfy $f(w_t)-f_* \leq \delta$ after $t_\delta = \left\lceil \frac{1}{2}\left(\frac{L}{\mu}\right)^2\log\left(\frac{f(w_0) - f_*}{\delta}\right) \right\rceil$ iterations.
    \end{enumerate}
Within the local region, there are 3 different ways we could reach an accuracy of $\epsilon$: (a) we might reach an accuracy of $\epsilon$ before the first reset occurs due to the global rate from Lemma~\ref{lem:NLCG}, (b) we might reach an accuracy of $\epsilon$ after the first reset due to the accelerated local rate from Lemma~\ref{lem:CG}, or (c) we might reach an accuracy of $\epsilon$ due to the finite termination in at most $d$ steps after the first reset from Lemma~\ref{lem:CG}. We consider case (a) first:
    \begin{enumerate}
    \setcounter{enumi}{3}
\item Initialized at $w_{t_\delta}$, from Lemma~\ref{lem:NLCG} we require an additional $t = \left\lceil \frac{1}{2}\left(\frac{L}{\mu}\right)^2\log\left(\frac{f(w_{t_\delta}) - f_*}{\epsilon}\right) \right\rceil$ iterations to have $f(w_t) - f_* \leq \epsilon$.
\item Using that $f(w_{t_\delta}) \leq \delta$, it is sufficient to have $t = \left\lceil \frac{1}{2}\left(\frac{L}{\mu}\right)^2\log\left(\frac{\delta}{\epsilon}\right) \right\rceil$ iterations.
\end{enumerate}
We now turn to case (b):
\begin{enumerate}
\setcounter{enumi}{3}
\item In order to exploit the local quadratic property, we require at most $(d-1)$ additional iterations beyond $t_\delta$. Thus, after $t_\delta + (d -1)$ iterations the algorithm is in in the quadratic region and has already reset so NLCG is equivalent to linear CG. Using Lemma \ref{lem:CG} with $\bar{L} = L_{*}$, PRP NLCG is guaranteed to satisfy $f(w_t) - f_{*} \leq \epsilon$ after another $t = \left\lceil \frac 1 2\sqrt{\frac{L_{*}}{\mu}}\log\left(4\left(\frac{L_*}{\mu}\right)^2\frac{f(w_{t_\delta + (d-1)}) - f_*}{\epsilon}\right) \right\rceil$ iterations.
\item Using that $f(w_{t_\delta}) - f_* \leq \delta$ and Lemma~\ref{lem:NLCG}, we have $f(w_{t_\delta + (d-1)}) - f_* \leq \left(\frac{1-(\mu/L)^2}{1+(\mu/L)^2}\right)^{2(d-1)}\delta \leq \exp(-2(d-1)\mu^2/L^2)\delta$, we can conclude that it is sufficient to have $t = \left\lceil \frac 1 2\sqrt{\frac{L_{*}}{\mu}}\log\left(4\left(\frac{L_*}{\mu}\right)^2\frac{\delta}{\epsilon}\right) - (d-1)\sqrt{\frac{L_*}{\mu}}\frac{\mu^2}{L^2} \right\rceil$ additional iterations (the negative term reduces the iteration count based on the progress made before the reset, which we have omitted in the final result, though it could be significant if $d$ is large and the condition number $L/\mu$ is small). 
\end{enumerate}
Finally, in case (c) we require at most $(d-1)$ steps before the first reset and then at most $d$ additional steps to solve the problem due to the finite termination in Lemma~\ref{lem:CG}. The final result is the minimum among cases (a), (b), and (c).
\end{proof}

The first term in the iteration complexity is the number of iteration to reach the local region. The squared dependency on $(L/\mu)$ reflects the potential slow global convergence rate of NLCG. The first term in the outer minimum reflects that the global convergence rate may lead to an accuracy of $\epsilon$ before the first reset occurs in the local region. The $d$ in the outer minimum bounds the number of iterations performed in the local region before a restart has occurred. The $d$ in the inner minimum reflects that NLCG will converge in exactly $d$ steps after being restarted in the local region. 
The other term in the inner minimum reflects the post-reset accelerated rate of NLCG with local smoothness $L_*$ within the local region (the squared $L_*/\mu$ term in the log is due to bounding the function values using a local iterate-based analysis of conjugate gradient). Note that this accelerated rate does not require knowing $\mu$.
Based on this result, we expect NLCG to outperform other first order methods for sufficiently small values of $d$, provided that $\Delta_0/\delta$ and the global condition number $(L/\mu)$ are not too large.

\fi

\section{Additional Lemmas}\label{additional-lemmas}
We use the following well-known result \cite[Lemma~5]{bubeck2015convex} in some of our proofs.
\begin{lemma}
    \label{smooth-convex-bound}
    Let $f$ be an $L$-smooth and convex function. Then $f$ satisfies
    \begin{align*}
        f(y) - f(x) \leq \iprod{\nabla f(y)}{y - x} - \frac{1}{2L}\norm{\nabla f(y) - \nabla f(x)}^2.
    \end{align*}
\end{lemma}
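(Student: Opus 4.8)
The plan is to reduce this inequality to the ordinary descent lemma (the quadratic upper bound implied by Definition~\ref{smoothness}) by composing $f$ with a linear shift chosen so that one of the two points becomes a global minimizer of the shifted function. Concretely, I would fix $x,y$ and introduce the auxiliary function $\phi(z) = f(z) - \iprod{\nabla f(y)}{z}$. Subtracting a linear functional preserves both convexity and the Lipschitz constant of the gradient, so $\phi$ is convex and $L$-smooth, with $\nabla\phi(z) = \nabla f(z) - \nabla f(y)$. Since $\nabla\phi(y) = 0$ and $\phi$ is convex, $y$ is a global minimizer of $\phi$; in particular $\phi(y) \le \phi(z)$ for every $z$, hence $\phi(y) \le \min_z \phi(z)$.

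Next I would apply $L$-smoothness of $\phi$ expanded around $x$: for all $z$, $\phi(z) \le \phi(x) + \iprod{\nabla\phi(x)}{z - x} + \frac{L}{2}\norm{z-x}^2$. Minimizing the right-hand side over $z$ (the minimum is attained at $z = x - \frac1L\nabla\phi(x)$) gives $\min_z \phi(z) \le \phi(x) - \frac{1}{2L}\norm{\nabla\phi(x)}^2$. Chaining this with the global-minimality of $y$ yields the single clean inequality $\phi(y) \le \phi(x) - \frac{1}{2L}\norm{\nabla\phi(x)}^2$.

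Finally I would substitute back $\phi(y) = f(y) - \iprod{\nabla f(y)}{y}$, $\phi(x) = f(x) - \iprod{\nabla f(y)}{x}$, and $\nabla\phi(x) = \nabla f(x) - \nabla f(y)$, and rearrange. The linear terms collapse to $\iprod{\nabla f(y)}{y-x}$, and using $\norm{\nabla f(x) - \nabla f(y)} = \norm{\nabla f(y) - \nabla f(x)}$ one recovers exactly $f(y) - f(x) \le \iprod{\nabla f(y)}{y-x} - \frac{1}{2L}\norm{\nabla f(y) - \nabla f(x)}^2$, which is the claim.

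There is essentially no obstacle: this is the standard shift-and-descent-lemma argument. The only points that need (routine) care are that Definition~\ref{smoothness} indeed implies the quadratic upper bound $f(v) \le f(u) + \iprod{\nabla f(u)}{v-u} + \frac{L}{2}\norm{v-u}^2$ — obtained by writing $f(v)-f(u) = \int_0^1 \iprod{\nabla f(u+t(v-u))}{v-u}\,dt$ and bounding the integrand with the Lipschitz condition and Cauchy--Schwarz — and that the linear shift leaves this constant $L$ unchanged. An equivalent one-line phrasing of the whole argument is that a gradient step on the convex $L$-smooth function $\phi$ from $x$ decreases $\phi$ by at least $\frac{1}{2L}\norm{\nabla\phi(x)}^2$ below $\phi(x)$, yet $\phi$ can never drop below $\phi(y)=\min\phi$.
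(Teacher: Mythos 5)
Your argument is correct, and it is essentially the canonical proof of this inequality: the paper itself does not prove the lemma but cites it as a known result (\cite[Lemma~5]{bubeck2015convex}), and the proof given in that reference is exactly your shift-and-descent-lemma construction — minimize the convex $L$-smooth function $\phi(z)=f(z)-\iprod{\nabla f(y)}{z}$, note $y$ is its global minimizer, and apply the quadratic upper bound at $x$. All the steps you flag as routine (the descent lemma from Definition~\ref{smoothness}, invariance of $L$ under a linear shift) are indeed routine, so there is nothing to add.
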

The next result \cite[Theorem~3.1]{drori2018properties} shows that Lemma \ref{smooth-convex-bound} also applies over open sets.
\begin{lemma}
    \label{smooth-convex-bound-subset}
    Let $f : C \rightarrow \mathbb{R}$ be an $L$-smooth and convex function over an open set $C \subsetneq \mathbb{R}^{d}$. Then for any $x, y \in C$ such that $\norm{x - y} < \text{dist}(y, \mathbb{R}^{d} \setminus C)$, 
    \begin{align*}
        f(y) - f(x) \leq \iprod{\nabla f(y)}{y - x} - \frac{1}{2L}\norm{\nabla f(y) - \nabla f(x)}^2.
    \end{align*}
\end{lemma}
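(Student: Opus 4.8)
To prove Lemma~\ref{smooth-convex-bound-subset}, the plan is to reduce it to a single ``localized co-coercivity'' inequality for an auxiliary function, and then to establish that inequality by following a gradient flow that is trapped inside $C$. Fix $x,y\in C$ with $r:=\norm{x-y}<R:=\mathrm{dist}(y,\mathbb{R}^d\setminus C)$, so the open ball $B(y,R)$ lies in $C$, is convex, and contains $x$ and the segment $[x,y]$. Set $h(z)=f(z)-f(y)-\iprod{\nabla f(y)}{z-y}$; on $B(y,R)$ this $h$ is convex and $L$-smooth, with $h(y)=0$, $\nabla h(y)=0$, $\nabla h(x)=\nabla f(x)-\nabla f(y)$, and $h\geq 0$ (the first-order convexity inequality at $y$ holds for every $z\in B(y,R)$ because $[y,z]\subseteq C$). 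Unpacking $h$, the inequality to be proven is exactly $h(x)\geq\frac{1}{2L}\norm{\nabla h(x)}^2$, so it suffices to show this.

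The key device is the gradient flow $\dot w(t)=-\nabla h(w(t))$, $w(0)=x$. First I would check it never leaves $B(y,R)$: wherever $w(t)\in B(y,R)$, convexity of $h$ along $[w(t),y]$ gives $\frac{d}{dt}\frac{1}{2}\norm{w(t)-y}^2=-\iprod{\nabla h(w(t))}{w(t)-y}\leq-(h(w(t))-h(y))\leq 0$, so $\norm{w(t)-y}$ stays $\leq r<R$; since $\nabla h$ is Lipschitz on the compact set $\bar{B}(y,r)\subset C$, standard ODE theory together with this a priori bound yields a flow defined for all $t\geq 0$ that remains in $\bar{B}(y,r)$. Then $\frac{d}{dt}h(w(t))=-\norm{\nabla h(w(t))}^2$, so $h(x)-\lim_{t\to\infty}h(w(t))=\int_0^\infty\norm{\nabla h(w(t))}^2\,dt$, and a compactness argument---$h(w(t))$ is nonincreasing and nonnegative hence convergent, finiteness of the integral forces $\nabla h(w(t_n))\to 0$ along some $t_n\to\infty$, and any limit point of $\{w(t_n)\}$ is a stationary point, hence a global minimizer, of the convex function $h$ on $B(y,R)$, where $h=h(y)=0$---shows $\lim_{t\to\infty}h(w(t))=0$, i.e. $h(x)=\int_0^\infty\norm{\nabla h(w(t))}^2\,dt$.

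The last ingredient is a lower bound on $v(t):=\norm{\nabla h(w(t))}$ along the flow: by $L$-smoothness in integral form, $|v(t)-v(s)|\leq\norm{\nabla h(w(t))-\nabla h(w(s))}\leq L\norm{w(t)-w(s)}\leq L\bigl|\int_s^t v(r)\,dr\bigr|$, so $v$ is locally Lipschitz with $|v'|\leq Lv$ a.e., and Gr\"onwall gives $v(t)\geq v(0)e^{-Lt}$. Combining, $h(x)\geq v(0)^2\int_0^\infty e^{-2Lt}\,dt=\frac{1}{2L}\norm{\nabla h(x)}^2$, which is the claim; rewriting $h$ in terms of $f$ gives the lemma (and $C=\mathbb{R}^d$, $R=\infty$ recovers Lemma~\ref{smooth-convex-bound}). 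The main obstacle is the bookkeeping that convexity and the $L$-smoothness descent bounds are only legal on $C$, so everything hinges on the trajectory never escaping $B(y,R)$---which is exactly what the hypothesis $\norm{x-y}<\mathrm{dist}(y,\mathbb{R}^d\setminus C)$ guarantees. This is also why the naive one-step argument (take $x\mapsto x-\nabla h(x)/L$ and apply the smoothness upper bound) fails: a crude triangle inequality only places that point within distance $2\norm{x-y}$ of $y$, which may exceed $R$, whereas the flow (i.e. gradient descent with vanishing step sizes) contracts toward $y$ using convexity alone. A minor point: the Gr\"onwall step is written through the integral form of the Lipschitz-gradient bound so as not to assume twice-differentiability.
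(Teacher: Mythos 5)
Your proposal is correct, but a comparison with ``the paper's own proof'' is moot in one sense: the paper does not prove Lemma~\ref{smooth-convex-bound-subset} at all, it quotes it from \cite[Theorem~3.1]{drori2018properties}. So what you have produced is a self-contained alternative, and it holds up. The reduction to showing $h(x)\geq\frac{1}{2L}\norm{\nabla h(x)}^2$ for $h(z)=f(z)-f(y)-\iprod{\nabla f(y)}{z-y}$ is exactly the right normalization, and you correctly identify why the textbook one-step argument fails here: the trial point $x-\frac{1}{L}\nabla h(x)$ is only guaranteed to lie within $2\norm{x-y}$ of $y$, so that argument covers only $\norm{x-y}<\tfrac{1}{2}\mathrm{dist}(y,\R^d\setminus C)$, whereas the lemma demands the full radius. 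Your fix is sound: along the gradient flow of $h$ started at $x$, convexity alone gives $\frac{d}{dt}\tfrac12\norm{w(t)-y}^2\leq -(h(w(t))-h(y))\leq 0$, so the trajectory is trapped in $\bar{B}(y,\norm{x-y})\subset B(y,\mathrm{dist}(y,\R^d\setminus C))\subseteq C$ and all convexity/smoothness inequalities you invoke are legal; the integral-form Lipschitz bound plus the lower Gr\"onwall inequality gives $\norm{\nabla h(w(t))}\geq\norm{\nabla h(x)}e^{-Lt}$, and integrating $\frac{d}{dt}h(w(t))=-\norm{\nabla h(w(t))}^2$ produces exactly the constant $\frac{1}{2L}$. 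Two remarks. First, the compactness argument showing $\lim_t h(w(t))=0$ is unnecessary: since $h\geq 0$ on the ball, $h(x)\geq h(x)-\lim_t h(w(t))=\int_0^\infty\norm{\nabla h(w(t))}^2\,dt$ already suffices. Second, the trade-off relative to deferring to the cited result is that your route imports ODE machinery (local existence and continuation via the a priori bound, a.e.\ differentiability of the locally Lipschitz $t\mapsto\norm{\nabla h(w(t))}$, Gr\"onwall), whereas the reference settles the claim by a finite argument; what your route buys is self-containedness, validity under the weakest reading of ``convex and $L$-smooth over $C$'' (only along segments inside the ball around $y$), and the fact that setting $C=\R^d$ recovers Lemma~\ref{smooth-convex-bound} with no extra work.
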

\ifextended
The following lemma \cite[Lemma 3.5]{beck2013convergence} is used in our convex results.
\begin{lemma}
    \label{sequence-lemma}
    Let $\{A_{k}\}_{k \geq 0}$ be a non-negative sequence of real numbers satisfying
    \begin{equation*}
        A_{k} - A_{k + 1} \geq \gamma A_{k}^2,
    \end{equation*}
    for $k = 0, 1, \ldots$, and
    \begin{equation*}
        A_{0} \leq \frac{1}{m \gamma},
    \end{equation*}
    for positive $\gamma$ and $m$. Then 
    \begin{equation*}
        A_{k} \leq \frac{1}{\gamma}\frac{1}{k + m},
    \end{equation*}
    for $k = 0, 1, \ldots$.
\end{lemma}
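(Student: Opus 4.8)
The plan is to convert the quadratic recursive inequality on $A_k$ into an affine lower bound on the reciprocals $1/A_k$, and then telescope. First I would record the monotonicity that comes for free from the hypothesis: since $A_k - A_{k+1} \geq \gamma A_k^2 \geq 0$ and the sequence is non-negative, we get $0 \leq A_{k+1} \leq A_k \leq A_0$ for every $k$. This observation also lets me dispose of a degenerate case up front: if $A_k = 0$ for some index $k$, then monotonicity and non-negativity force $A_j = 0$ for all $j \geq k$, so the claimed bound $A_j \leq \tfrac{1}{\gamma(j+m)}$ holds trivially there. Hence I may assume $A_k > 0$ for all $k$ and all reciprocals below are well defined.

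Next I would divide the hypothesis $A_k - A_{k+1} \geq \gamma A_k^2$ through by the positive quantity $A_k A_{k+1}$ to obtain
\[
\frac{1}{A_{k+1}} - \frac{1}{A_k} \;\geq\; \gamma\,\frac{A_k}{A_{k+1}} \;\geq\; \gamma,
\]
where the last inequality uses $A_k \geq A_{k+1} > 0$ from the monotonicity step. Summing this over $j = 0, \dots, k-1$ telescopes to $\tfrac{1}{A_k} - \tfrac{1}{A_0} \geq \gamma k$, i.e. $\tfrac{1}{A_k} \geq \tfrac{1}{A_0} + \gamma k$. Finally, the initialization hypothesis $A_0 \leq \tfrac{1}{m\gamma}$ is equivalent to $\tfrac{1}{A_0} \geq m\gamma$, so $\tfrac{1}{A_k} \geq m\gamma + \gamma k = \gamma(k+m)$, and inverting gives $A_k \leq \tfrac{1}{\gamma}\tfrac{1}{k+m}$, which is exactly the claim; note the case $k = 0$ simply reproduces the hypothesis.

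There is no genuinely difficult step here. The only thing requiring a moment of care is the division by $A_k A_{k+1}$, which is why I would open with the monotonicity-and-degeneracy remark so that taking reciprocals is legitimate; beyond that the argument is the standard ``invert, then telescope'' trick for sequences satisfying a $A_k - A_{k+1} \gtrsim A_k^2$ recursion.
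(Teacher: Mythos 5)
Your proof is correct: the reciprocal-telescoping argument (divide by $A_k A_{k+1}$, use monotonicity to get $\tfrac{1}{A_{k+1}} - \tfrac{1}{A_k} \geq \gamma$, sum, and invoke $\tfrac{1}{A_0} \geq m\gamma$) is exactly the standard proof of this lemma, which the paper does not reprove but imports from Beck's work (Lemma 3.5 of the cited reference), where the argument is the same. Your extra care with the degenerate case $A_k = 0$ and the legitimacy of taking reciprocals is a fine (if slightly informally phrased) addition and does not change the substance.
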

\fi

\section{Open Problems}\label{open-problems}
In this section we discuss a variety of possible follow-up works and related open problems.

\subsection{Analyzing LO} The Armijo condition is a standard approach to rule out step sizes that are too large. We use the Armijo condition in our results related to 
SGD (Theorem~\ref{stochastic-theorem}) and NAG (Theorem~\ref{acceleration-theorem}). 
It would be preferable to have an analysis of LO in these settings (or prove that LO does not achieve the appropriate rate). LO can allow step sizes that are much larger than allowed by the Armijo condition, but does not guarantee the same sufficient decrease relative to the step size. In particular, we did not find results in the literature (positive or negative) related to 
showing accelerated rates for NAG(LO) or analyzing the convergence of SGD with LO under interpolation. It is known that the empirical performance of SGD under interpolation can be improved with step sizes that are larger than allowed by the Armijo condition~\cite{galli2024don}.
A related issue is whether $R^2(w_0)$ can be replaced by $\norm{w_0 -w _*}^2$ when only assuming global convexity (Theorems~\ref{global-convex-local-PL-theorem} and~\ref{global-convex-theorem}) so that the iteration complexity GD(LO) is never worse than GD(1/L).

\subsection{Adapting NAG to $\mu_*$} In our analysis of NAG (Theorem~\ref{acceleration-theorem}) we assume that $\mu$ is given. This could potentially be replaced with a restart mechanism~\cite{nesterov2013gradient}. However, similar to GD methods we would like NAG to be adaptive to a local $\mu_*$. We view it as an open problem to develop an NAG method that can adapt to a local $L_*$ and $\mu_*$.

\subsection{Relaxing interpolation} Our analysis of SGD (Theorem~\ref{stochastic-theorem}) exploits deterministic monotonicity in the iterate distance under the interpolation assumption. Without interpolation the first challenge of analyzing SGD under glocal smoothness is that SGD may leave the local $\delta$ region repeatedly. To analyze SGD without interpolation it might be easier to use a continuous variant of the glocal smoothness assumption~\cite{vaswani2025armijo}, use glocal assumptions on the noise as in the state-dependent noise assumptions~\cite{ilandarideva2024accelerated}, or focus on variants such as dual averaging~\cite{nesterov2009primal} and finite-sum methods~\cite{SchmidtRB17} where the variance in the stochastic gradients converges to zero. 

A second challenge when using SGD under glocal-like assumptions is finding a strategy for setting the step sizes which adapts to  local properties. Using a decoupled variant of the stochastic line-search in conjunction with exponentially decreasing step-sizes~\cite{vaswani2022towards}, or the stochastic Polyak step-size with AdaGrad step-sizes~\cite{jiang2024adaptive} might be promising approaches that can take advantage of the glocal assumption, while ensuring convergence to the minimizer.  

\subsection{Improving NLCG}

It is possible to relax the assumption that the Hessian is constant in the local region when analyzing NLCG. In particular, if the Hessian is only Lipschitz continuous then the $d$-iteration cycles of NLCG have an asymptotic quadratic local convergence rate~\cite{polyak1969conjugate}. It is possible that a glocal analysis could incorporate this property. It may also be possible to improve the global rate under a variant of NLCG that exploits subspace optimization~\cite{karimi2017single}.

\subsection{Generalizing to PL functions}

\textcolor{red}{In Section~\ref{sec:PL}, we mention that our glocal GD(LO) result for strongly-convex functions easily generalizes to PL functions. We note that our coordinate descent bounds also still hold if the PL condition is assumed instead of strong convexity. However, many of our results do not immediately apply to general PL functions. In particular, we use other properties of strong convexity for our bounds related to the Polyak step size, AdGD step size, SGD, accelerated GD, and NLCG. Thus, showing improved convergence rates under these settings for PL remains open.}

\subsection{Non-convexity} This work focuses on functions that are either convex or satisfy the PL inequality. 
It is likely that glocal smoothness could be combined with related conditions like gradient-dominated and star-convex functions~\cite{nesterov2006cubic}. However, it is not clear how to exploit a condition like glocal smoothness for general non-convex functions. The challenge in the general non-convex setting is bounding the time before we can guarantee that we have a sub-optimality below $\delta$. One possibility is to define the $\delta$ region based on the norm of the gradient. However, it is unclear (except asymptotically) how we can guarantee that an iterate reaches/stays in a region with small gradient norm.

\subsection{Non-monotone line-search} There is a long history of line-search methods that do not require monotonic decrease in the objective~\cite{grippo1986nonmonotone,raydan1997barzilai}, and recent work shows that such methods can be effective in training over-parameterized neural networks~\cite{galli2024don}. It would be interesting to analyze such schemes under glocal smoothness, and to explore additional assumptions under which these practically-effective methods have a theoretical advantage.

\subsection{Adaptive non-uniform sampling}
Our analysis of coordinate descent (Theorem~\ref{coordinate-descent-theorem}) and SGD under interpolation (Theorem~\ref{stochastic-theorem}) assume uniform sampling of the coordinates and examples (respectively). However, it is known that the global convergence rates in these settings can be improved using a non-uniform distribution incorporating the Lipschitz constants of the coordinates/examples~\cite{nesterov2012cd,needell2014stochastic}. It is also known that faster theoretical rates can be obtained using known local Lipschitz constants~\cite{VainsencherLZ15}, and faster rates in practice are reported when estimating local constants~\cite{SchmidtRB17}. However, we are not aware of theoretical results on non-uniform sampling based on estimates of local constants (which is challenging because the estimate of smoothness for each coordinate/example affects the sampling distribution).

\subsection{Per-coordinate step sizes} A key component of the Adam optimizer~\cite{kingma2014adam} is a per-variable learning rate. Recent works have given the first improved convergence rates with appropriately-estimated per-variable learning rates under global smoothness~\cite{kunstner2023searching,gao2024gradient}. It is not obvious what the analogue of LO should be in this setting. 
In the glocal setting, we note that prior works may not adapt quickly enough in the $\delta$ region~\cite{kunstner2023searching} or quickly enough globally~\cite{gao2024gradient}.
Thus, we expect that better methods could be developed under glocal smoothness.

\subsection{Level sets} We have considered partitioning the space into a global region and a sub-level set, but this is not always the most appropriate partition. For example, using the Huber loss for regression~\cite{hastie2017elements} the smoothness constant can be maximized at the minimizer. There is also empirical evidence that the local smoothness constant can increase during the training of deep neural networks~\cite{cohen2021gradient,tran2024empirical}. For such settings it could make sense to assume global $L$-smoothness and $L_*$-smoothness over a superlevel set. We chose to focus on sublevel sets in our glocal assumption since good performance requires increasing the step size, whereas an increased smoothness over a superlevel set is already exploitable by simple backtracking procedures.
 Perhaps a more general assumption would be that the function is $L_*$-smooth for a particular unknown range of function values (whether it be a sub/superlevel set or not), and we would like to optimize iteration complexity in this setting.

\subsection{Non-smooth functions} We have focused on the case of unconstrained smooth functions, but we expect that it is possible to analyze algorithms for constrained and non-smooth optimization under similar assumptions. For example, non-smooth optimization typically relies on sub-gradient bounds. A non-smooth variant of the glocal assumption could assume that the sub-gradient bound is smaller on a sublevel set. Indeed, existing work~\cite{diakonikolas2024optimization} shows improved rates based on the sub-gradients near the solution. It would be interesting to explore whether the adaptivity of AdaGrad~\cite{duchi2011adaptive} is preserved if the subgradients become more well-behaved over time, or if an alternate algorithm is required.

\subsection{Relative smoothness} We expect that it would be possible to analyze algorithms under glocal variants of relative smoothness\textcolor{red}{~\cite{BauschkeBT17, LuFN18}} and related assumptions~\cite{maddison2018hamiltonian,wilson2019accelerating}. The modified algorithm obtained for several canonical problems under these relaxed assumptions is GD with a different step size sequence, and it would be interesting to know if GD(LO) obtains faster rates on these problems.

\subsection{\textcolor{red}{Neural network training}} 

\textcolor{red}{Our assumptions of convexity and glocal smoothness are meant to be the simplest possible assumptions showing the benefits of a global-local analysis, but do not hold for typical neural networks. However, there are potential scenarios where a similar two phase style analysis could make sense. For example, as discussed in Section~\ref{sec:PL} several works argue that the PL inequality may hold around the optimization trajectory during neural network training. It is plausible that glocal smoothness assumptions may also hold around the optimization trajectory. Indeed, the work by \citet{GilmerGGKNCDNF21} shows that step size warm-up can induce a decrease in the local smoothness constant. Thus, the iterates early in training can be in a region with a larger Lipschitz constant while the iterates of the post warm-up phase enter a region with a smaller local Lipschitz constant.}

We would also like to comment on the edge of stability phenomenon~\cite{cohen2021gradient} and other related phenomena. The edge of stability phenomenon suggests, in several deep learning training scenarios, that the smoothness of the objective adapts to the step size being used. This is as opposed to the present work, where we are advocating to adapt the step size to the problem. Nevertheless, it is likely that algorithmically controlling local smoothness  is important for improving the training of deep neural networks. 
\textcolor{red}{Indeed, sharpness-aware minimization methods \citep{ForetKMN21}  push iterates into regions with lower smoothness constants as training progresses and have been shown to improve generalization.}
Thus, we believe that improving our understanding of how local smoothness interacts with optimization may ultimately shed light on how to better train deep neural networks.

\end{document}